\newtheorem{assumption}{Assumption}
\numberwithin{equation}{section}
\numberwithin{equation}{section}
\newtheorem{lem}{Lemma}[section]
\newtheorem{prop}[lem]{Proposition}
\newtheorem{thm}[lem]{Theorem}
\newtheorem{rem}[lem]{Remark}
\newtheorem{defn}[lem]{Definition}
\newtheorem{cor}[lem]{Corollary}
\newtheorem{exa}[lem]{Example}
\def\paral{/\kern-0.55ex/}
\def\parals_#1{/\kern-0.55ex/_{\!#1}}
\def\bparals_#1{\breve{/\kern-0.55ex/_{\!#1}}}
\def\n#1{|\kern-0.24em|\kern-0.24em|#1|\kern-0.24em|\kern-0.24em|}
\def\EE{{\mathbf E}}
\newcommand{\F}{{\mathcal F}}
\def\g{{\mathfrak g}}
\def\h{{\mathfrak h}}
\def\m{{\mathfrak m}}
\newcommand{\R}{{\mathbb R}}
\def\N{{\mathcal N}}
\def\L{{\mathcal L}}
\def\1{{\mathbf 1}}
\def\f{\frac}
\def\ad{{\mathop {\rm ad}}}
\def\Ad{{\mathop {\rm Ad}}}
\def\s.t.{\mathop {\rm s.t.}}
\def\div{\mathop{\rm div}}
\def\ker{\mathop{\rm ker}}
\def\Dom{\mathop{\rm Dom}}
\def\index{\mathop{\rm index}}
\def\range{\mathop{\rm Range}}
\def\so{{\mathfrak {so}}}
\def\<{\langle}
\def\>{\rangle}
\def\Lip{\mathrm {Lip}}
\def\Iso{\mathrm {Iso}}
\def\paral{/\kern-0.55ex/}
\def\parals_#1{/\kern-0.55ex/_{\!#1}}
\def\bparals_#1{\breve{/\kern-0.55ex/_{\!#1}}}
\def\n#1{|\kern-0.24em|\kern-0.24em|#1|\kern-0.24em|\kern-0.24em|}
\begin{document}

\title*{  Perturbation of Conservation Laws\\
and Averaging on Manifolds}
\author{Xue-Mei Li}
\institute{Xue-Mei Li \at Department of Mathematics,  Imperial College London, London SW7 2AZ, U.K. \email{xue-mei.li@imperial.ac.uk}}

\maketitle

\abstract{ We prove a stochastic averaging theorem for stochastic differential equations in which
 the slow and the fast variables interact. The approximate Markov fast motion is a family of Markov process
 with generator $\L_x$ for which we obtain a locally uniform law of large numbers and obtain the continuous dependence of their invariant measures on the parameter $x$.  These results are obtained  under the assumption that  $\L_x$ satisfies H\"ormander's bracket conditions, or more generally $\L_x$ is a family of Fredholm operators with sub-elliptic estimates. 
 On the other hand a conservation law of a dynamical system can be used as a tool for separating the scales in  singular perturbation problems. We also study a number of motivating examples from mathematical physics and from geometry where we use non-linear  conservation laws  to deduce   slow-fast systems of stochastic differential equations.}

\bigskip

\begin{center} {\it Table of Contents}\end{center}

{{1.} Introduction} 

\quad \quad{ {1.1} \; Description of results}

{ {2.} Examples}

\quad\quad {2.1} \; A dynamical description for Brownian motions

\quad\quad {2.2} \; Collapsing of manifolds

\quad\quad {2.3} \; Inhomogeneous scaling of Riemannian metrics

\quad\quad {2.4} \; Perturbed dynamical systems on principal bundles

\quad\quad {2.5} \; Completely integrable stochastic Hamiltonian systems

 {3.} Ergodic theorem for Fredholm operators depending on a parameter

 {4.} Basic Estimates for SDEs on manifolds
 
{5.} Proof of Theorem 2

{6.} Re-visit the examples
%
%
%

{References}

\section{Introduction}

A deterministic or random system with a conservation law is often used to  approximate the motion of an object that is also subjected to many other smaller deterministic or random influences.
The latter is a perturbation of the former.
To describe the evolution of the  dynamical system, we  begin with these conservation laws.
 A conservation law is  a quantity which does not change with time, for us it  is an  equi-variant map  on a manifold, i.e. a map which is  invariant under an action of  a group.
  They describe the orbit of the action. Quantities describing the perturbed systems have their natural scales, the conservations laws can be used to determine the different components  of the system which evolve at different speeds. Some components may move at a much faster speed than some others, in which case we either ignore the slow components,  in other words we  approximate  the perturbed system by the unperturbed one,   or  ignore the fast components and describe
  the slow components for which the  key ingredient is ergodic averaging.    It is a standard assumption that the fast variable moves so fast that its influence averaged over any time interval, of the size comparable to the natural scale of our observables, is effectively  that of an averaged vector field. The averaging is with respect to a probability measure on the state space of the fast variable.
 Depending on the object of the study, we will need to neglect either the small perturbations or quantities too large (infinities) to fit into the natural scale of things. To study singularly perturbation operators, we must  discard the infinities and at the same time retain the relevant information on the natural scale.
   In Hamiltonian formulation, for example,  the time evolution of an object, e.g. the movements of celestial bodies, is governed by a Hamiltonian function. 
If the magnitude of the Hamiltonian is set to be of order `1',  the  magnitude of the perturbation (the collective negligible influences) is of order  $\epsilon$, then the perturbation is negligible on an interval of any fixed length. This ratio in magnitudes translates into  time scales. If the original system is on scale $1$,    we work on a time interval of length $\f 1 \epsilon$  to see the deviation of the perturbed trajectories. 
 Viewed on the time interval $[0,1]$ the perturbation is not observable. On $[0, \f 1 \epsilon]$ the perturbation is observable, the natural object to study is the evolution of the energies while  the dynamics of the Hamiltonian dynamics becomes too large.  See \cite{Freidlin-Wentzell-original, Arnold89,  Borodin-Freidlin}.
 
 \bigskip

  If the state space of our dynamical system has an action by a group,  the orbit manifold is a fundamental object. We use the projection to the orbit manifold as a conservation law and use it to separate the slow and the fast variables in the system. The slow variables lie naturally on a quotient manifold.  In many examples we can further reduce this system of slow-fast stochastic differential equations (SDEs) to a product manifold $N\times G$,  which we describe later by examples. From here we proceed to prove an averaging principle for  the family of  SDEs with a parameter~$\epsilon$.  In these SDEs the slow and the fast variables are already separate, but they interact with each other. 
  
  This can then  be applied to  a local product space such as a principal bundle.
  In  \cite{Li-geodesic, Li-homogeneous, Li-limits}, the slow variables in the reduced system are random ODEs, where we study the system on the scale of $[1, \f 1 {\epsilon^2}]$ to obtain results of the nature of diffusion creation. In these studies we bypassed stochastic averaging and went straight for the diffusion creation. 
  In \cite{Li-averaging, Hogele-Ruffino, Gonzales-Gargate-Ruffino} stochastic averaging are studied, but they are computed in local coordinates.
  Here the slow variables  solve a genuine  SDE with  a stochastic integral and the computations are global.
 We first prove an averaging theorem for these SDEs and then study some examples where we deduce a slow-fast  system of SDEs from non-linear conservation laws, to which our main theorems apply.
\bigskip

Throughout the article $(\Omega, \F, \F_t, P)$ is a probability space satisfying the usual assumptions.
Let $(B_t, W_t)$ be a Brownian motion on $\R^{m_1}\times \R^{m_2}$ where $m_1, m_2\in \N$. We write
 $B_t=(B_t^1, \dots, B_t^{m_1})$ and $W_t=(W_t^1, \dots, W_t^{m_1})$.
 Let $N$ and $G$ be two complete connected smooth Riemannian manifolds, let $x_0\in N$ and $y_0\in G$. Let $\epsilon$ denote a small positive number and let  $m_1$ and $ m_2$ be two natural numbers. Let $X: N\times G\times \R^{m_1} \to TN$ 
and $Y: N\times G\times \R^{m_2} \to TG$ be $C^3$ smooth maps linear in the last variable. Let $X_0$ and $Y_0$ be $C^2$ smooth vector fields on $N$ and on $G$ respectively, with a parameter taking its values in the other manifold. Let us consider the SDEs, 
\begin{equation}\label{slow-fast-sdes}
\left\{\begin{aligned} dx_t^\epsilon=& X(x_t^\epsilon, y_t^\epsilon)\circ dB_t+
X_0(x_t^\epsilon, y_t^\epsilon) \,dt, \quad &x_0^\epsilon=x_0,\\
dy_t^\epsilon=& \f 1 {\sqrt\epsilon}Y(x_t^\epsilon,y_t^\epsilon) \circ dW_t+
\f 1{\epsilon} Y_0(x_t^\epsilon, y_t^\epsilon)\,dt, \quad &y_0^\epsilon=y_0.\end{aligned}\right.
\end{equation}
The symbol $\circ $ is used to  denote Stratonovich integrals.
By choosing an orthonormal basis $\{e_i\}$ of $\R^{m_1}\times \R^{m_2}$, we obtain a family of
 vector fields $\{X_1, \dots, X_{m_1}, Y_1, \dots, Y_{m_2}\}$ as following:
  $X_i(x)=X(x)(e_i)$ for $1\le i\le m_1$
 and $Y_i(x)=Y(x)(e_i)$ for $i=m_1+1, \dots, m_1+m_2$. Then the system of SDEs (\ref{slow-fast-sdes}) is equivalent to the following
 $$\left\{\begin{aligned} dx_t^\epsilon=&\sum_{k=1}^{m_1}  X_k(x_t^\epsilon, y_t^\epsilon)\circ dB_t^k+
X_0(x_t^\epsilon, y_t^\epsilon) \,dt, \quad   &x_0^\epsilon=x_0,\\
dy_t^\epsilon=& \f 1 {\sqrt\epsilon}\sum_{k=1}^{m_2}  Y_k(x_t^\epsilon, y_t^\epsilon) \circ dW_t^k+
\f 1{\epsilon} Y_0(x_t^\epsilon, y_t^\epsilon)\,dt,\quad &y_0^\epsilon=y_0. \end{aligned}\right.$$

If $V$ is a vector field, by  $Vf$ we mean $df(V)$ or $L_Vf$, the Lie differential of $f$ in the direction of $V$. Then $(x_t^\epsilon, y_t^\epsilon)$ is a sample continuous Markov process with generator $\L^\epsilon :=\f 1{ \epsilon} \L+\L^{(1)}$ where
$$\L=\f 12 \sum_{k=1}^{m_2} Y_i ^2+Y_0, \quad \L^{(1)}=\f 12 \sum_{k=1}^{m_1} X_i^2+X_0.$$

In other words if $f:N\times G\to \R$ is a smooth function then $$\L^\epsilon f(x,y):=\f 1{ \epsilon} \L_x (f (\cdot, y))(x)+\L_y^{(1)}( f(x, \cdot))(y),$$
where
$$\begin{aligned}\L_x f(x, \cdot) &=\left( \f 12 \sum_{k=1}^{m_2} Y_i^2(x,\cdot) +Y_0(x, \cdot)\right)f(x,\cdot), \\ \L_y^{(1)}f (\cdot,y)&=\left( \f 12 \sum_{k=1}^{m_1} X_i^2 (\cdot, y)+X_0(\cdot, y) \right)f(\cdot, y).\end{aligned}$$

The result we seek is the weak convergence of the slow variables $x_t^\epsilon$ to a Markov process $\bar x_t$ whose  Markov generator $\bar \L$ is to be described. 
\bigskip

Let $T$ be a positive number and let $C([0,T];N)$ denote the family continuous functions  from $[0, T]$ to   $N$,  the topology on $C([0,T];N)$  is given by the uniform distance.  A family of  continuous stochastic processes $x_t^\epsilon$ on $N$ is said to converge to a continuous process $\bar x_t$ if for every bounded continuous function $F: C([0,T];N)\to \R$,  as $\epsilon$ converges to zero,
$$\EE [F(x_\cdot^\epsilon)]\to \EE[ F(\bar x_\cdot)].$$

In particular, if  $u^\epsilon(t,x,y)$ is a bounded regular  solution to the Cauchy problem for the PDE ( for example $C^3$ in space and $C^1$ in time) $\f {\partial u^\epsilon} {\partial t} =\L^\epsilon u$
with the initial value $f$ in $L_\infty$,   then
$ u^\epsilon(t,x_0,y_0)=\EE[ f(x_t^\epsilon, y_t^\epsilon) ]$.   Suppose that the initial value function $f$ is  independent of the second variable so $f:N\to \R$. Then  the weak convergence will imply that $$\lim_{\epsilon \to 0} u^\epsilon(t,x_0,y_0)=u(t,x_0)$$ where $u(t,x)$ is the bounded regular solution to  the Cauchy problem $$\f {\partial u} {\partial t} =\bar  \L u,  \quad u(0,x)=f(x).$$
 \medskip
  
Stochastic averaging is a procedure of equating time averages with space averages using  a form of Birkhoff's ergodic theorem or a law of large numbers. 
Birkhoff's pointwise ergodic theorem states that if $T:E\to E$  is a measurable transformation  preserving a probability measure $\mu$ on the metric space $E$ then for any $F\in L^1(\mu)$, 
$$\f 1 n\sum_{k=1}^n F(T^kx)\to \EE (F| {\mathcal I})$$
for almost surely all $x$, as $n\to \infty$,  and where ${\mathcal I}$  is the invariant $\sigma$-algebra of $T$. 
Suppose that $(z_t)$ is a sample continuous ergodic stochastic process with values in $E$, stationary on the space of paths $C([0,1]; E)$.
Denote by $\mu$ its one time probability distribution.  Then for any real valued function $f\in L^1(\mu)$,
$$\f 1 t \int_0^t f(z_s)ds\to \int f(z) \mu(dz).$$
This is simply Birkhoff's theorem applied to the shift operator  and to the function 
$F(\omega)=\int_0^1 f(z_s(\omega)) ds$.
If $z_t$ is not stationary, but a Markov process with initial value a point, conditions are needed to
ensure the convergence of the Markov process to  equilibrium with sufficient speed. 
\bigskip

We  explain below stochastic averaging for a random field whose randomness is introduced by a fast diffusion.
Let 
$(x_t^\epsilon, y_t^\epsilon)$ be solution to the SDE on $\R^{m_1}\times \R^{m_2}$:
$$dx_t^\epsilon=\sum_{k=1}^{m_1}\sigma_k(x_t^\epsilon, y_t^\epsilon)\,dB_t^k+b(x_t^\epsilon, y_t^\epsilon)\,dt, \quad dy_t^\epsilon=\f 1{\sqrt \epsilon} \sum_{k=1}^{m_2} \theta_k(x_t^\epsilon,  y_t^\epsilon) dW_t^k+\f 1 {\epsilon}b(x_t^\epsilon,y_t^\epsilon)\,dt.$$
with initial values $x_0^\epsilon=x_0$, and $ y_0^\epsilon=y_0$.  Here the stochastic integrations  are  It\^o integrals.
A sample averaging theorem is as following.
Let $z_t^x$ denote the solution to the
SDE  $$dz_t^x=\sum_{k=1}^{m_2}\theta_k(x, z_t^x) dW_t^k+b(x,z_t^x)\,dt$$
with initial value $z_0^x$.
Suppose that the  coefficients are globally Lipschitz continuous and bounded. Suppose that  $\sup_{t\in [0,T]}\sup_{\epsilon \in (0, 1]} \EE|y_t^\epsilon|^2$ and $\sup_x\sup_{t\in [0,T]} \EE |z_t^x|^2$  are finite.
Also suppose that  there exist functions  $\bar a_{i,j}$ and $\bar b$ such that 
\begin{equation}\label{ergodic-assumption}
\begin{aligned} 
\left|\f 1 t \EE \int_0^t b(x, z_s^x)ds-\bar b(x)\right|\le C(t)(|x|^2+|z|^2+1),\\
\left|\f 1 t \EE \int_0^t \sum_{k}\sigma_k^i\sigma_k^j(x, z_s^x)ds-\bar a_{i,j}(x)\right|\le C(t)(|x|^2+|z|^2+1).
\end{aligned}
\end{equation}  
Then the stochastic processes $x_t^\epsilon$ converge weakly to a Markov process with generator  $\f 12  \bar a_{i,j}\f {\partial^2} {\partial x_i\partial x_j}+ \bar b_k\f {\partial^2} {\partial x_k} $, see 
  \cite{Stratonovich61,Stratonovich63,hasminskii68, Borodin77,Veretennikov}.
   See also \cite{Kohler-Papanicolaou74, Kurtz70, Kipnis-Varadhan, Helland,Papanicolaou-Stroock-Varadhan73}. See \cite{Hairer-Pardoux,Barret-vonRenesse,Fu-Liu, Fu-Duan, Dolgopyat-Kaloshin-Koralov,Liverani-Olla,Ruffino, E, Borodin-Freidlin,  Hairer-Pavliotis, Catellier-Gubinelli, Kelly-Melbourne} for a range of  more recent related work. We also refer to the following books
   \cite{Komorowski-Landim-Olla,Pavliotis-Stuart08,Skorohod-Hoppensteadt-Habib}

 Averaging of stochastic differential equations on manifolds has been studied in the following articles \cite{KiferBook88}, \cite{Li-averaging}, \cite{Li-OM-1},  and \cite{Gonzales-Gargate-Ruffino}. In these studies either one restricts to  local coordinates, or has a set of convenient coordinates, or one works directly with local coordinates.
We will be using a global approach.

We will first deduce a locally uniform Birkhoff's ergodic theorem for $\L_x$, then prove an averaging theorem for  (\ref{slow-fast-sdes}). Finally we study a number of examples of singular perturbation problems. 

The main assumptions on $\L_x$ is a H\"ormander's (bracket) condition.
\begin{definition}\label{def-hormander}
Let $X_0, X_1, \dots, X_k$ be smooth vector fields.
\begin{enumerate}
\item The differential  operator $\sum_{k=1}^m (X_i)^2+X_0$  is said to satisfy {\it H\"ormander's condition}  if  $\{X_k, k=0, 1, \dots, m\}$ and their iterated Lie brackets generate the tangent space at each point.
\item The differential  operator $\sum_{k=1}^m (X_i)^2+X_0$, is said to satisfy {\it strong H\"ormander's condition}  if  $\{X_k, k=1, \dots, m\}$ and their iterated Lie brackets generate the tangent space at each point.

\end{enumerate}
\end{definition}

{\it Outline of the paper.} In \S\ref{Birkhoff} we study the regularity of invariant probability measures $\mu^x$ of $\L_x$ with respect to the parameter $x$  and prove the local uniform law of large numbers with rate. We may assume that each $\L_x$ satisfies H\"ormander's condition.
What we really need is that  $\L_X$ is a family of Fredholm operators satisfying the sub-elliptic estimates and with zero Fredholm index.
In \S\ref{estimates} we give estimates for SDEs on manifolds. It is worth noticing that we do not assume that the transition probabilities have densities.  We use an approximating family of distance functions to overcome the problem that the distance function is not smooth. These estimates
lead easily  to the tightness of the slow variables. 
In \S\ref{proof} we prove the convergence of the slow variables, for which we first prove a theorem on time averaging of path integrals of the slow variables. This is proved under a law of large numbers with any uniform rate.
In \S\ref{examples} we study some examples of singular perturbation problems. Finally,  we pose a number of open questions,  one of which is presented in the next section, the others are presented in \S\ref{examples}.

 \subsection{Description of results.}
\label{results}
The following  law of large numbers with a locally uniform rate  is proved is section~\ref{Birkhoff}.

\begin{theorem}
[Locally Uniform  Law of Large Numbers]
\label{lln}
Let $G$ be a compact manifold. Suppose that $Y_i$ are bounded, $C^\infty$ with bounded derivatives. Suppose that  each
$$\L_x=\f 12\sum_{i=1}^m  Y^2_i(x, \cdot) +Y_0(x, \cdot)$$ satisfies H\"ormander's condition (Def. \ref{def-hormander}),
and has a unique invariant probability measure  $\mu_x$. Then the following statements hold for $\mu_x$.
\begin{enumerate}
\item[(a)]  
  $x\mapsto \mu_x$ is locally Lipschitz continuous in the total variation norm.
\item[(b)] 
  For every  $s>1+\f{\dim(G)}{2}$ there exists a positive constant $C(x)$, depending continuously in $x$,  such that for every smooth  function $f:G \to \R$, 
\begin{equation}
\label{llln-1}
\left| {1\over T} \int_t^{t+T} f(z_r^x) \;dr- \int_G f(y) \mu_x(dy) \right|_{L_2(\Omega)} 
\le C(x)\|f\|_s\f 1{\sqrt T},
\end{equation}
where  $z_r$ denotes an $\L_x$-diffusion.

\end{enumerate}

\end{theorem}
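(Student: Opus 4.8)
The plan is to build both statements on the spectral-gap / sub-elliptic regularity theory for the family $\L_x$. First I would fix the functional-analytic setting. Since each $\L_x$ satisfies H\"ormander's condition on the compact manifold $G$ and has a unique invariant probability measure $\mu_x$, the operator $\L_x$ is hypoelliptic, $\mu_x$ has a smooth strictly positive density, and $\L_x$ acts on $L^2(\mu_x)$ (or on a fixed reference space, say $L^2(G,\mathrm{vol})$) as a Fredholm operator with zero index and a one-dimensional kernel spanned by constants. The cokernel is spanned by $\mu_x$. Consequently the Poisson equation $\L_x g = f - \int_G f\,d\mu_x$ has, for each smooth $f$, a unique smooth solution $g = g_x$ with $\int_G g_x\,d\mu_x = 0$, and the sub-elliptic estimates give a bound $\|g_x\|_{s+2} \le C(x)\|f\|_s$ with $C(x)$ controlled locally uniformly in $x$ (this is where one invokes "$\L_x$ is a family of Fredholm operators satisfying sub-elliptic estimates", as flagged in the outline). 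The continuity of $x\mapsto C(x)$ follows from continuity of the coefficients $Y_i(x,\cdot), Y_0(x,\cdot)$ in $x$ together with stability of the Fredholm inverse under perturbation.

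For part (a), I would use the resolvent/Poisson-equation representation of $\mu_x$. Writing $G_x$ for the Green operator (pseudo-inverse of $\L_x$ orthogonal to constants and to $\mu_x$), one has $\mu_x - \mu_{x'} = (\mu_x - \mu_{x'})$ tested against $f$ equals $\int (\L_{x'}-\L_x) g_x \, d\mu_{x'}$ for the solution $g_x$ of $\L_x g_x = f - \mu_x(f)$; indeed $\mu_{x'}(f)-\mu_x(f) = \mu_{x'}(\L_x g_x) = \mu_{x'}\big((\L_x - \L_{x'})g_x\big)$ since $\mu_{x'}(\L_{x'}g_x)=0$. Now $\L_x - \L_{x'}$ is a second-order operator whose coefficients are $O(|x-x'|)$ in $C^0$ (using the $C^3$, bounded-derivative hypotheses on $Y_i$ and the mean value theorem), so $|\mu_{x'}(f)-\mu_x(f)| \le C\,|x-x'|\,\|g_x\|_{2,\infty} \le C(x)\,|x-x'|\,\|f\|_{2,\infty}$. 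Taking the supremum over $\|f\|_\infty\le 1$ (after noting that the $\|f\|_{2,\infty}$ can be traded for $\|f\|_\infty$ by a smoothing/density argument, or by working with the dual formulation directly) yields the total-variation Lipschitz bound, locally in $x$. The main technical point here is making the loss of derivatives harmless: one smooths $f$ at scale $\delta$, pays $\|f_\delta\|_{2,\infty}\lesssim \delta^{-2}\|f\|_\infty$ in the Poisson estimate and $\|f-f_\delta\|_\infty$-type error $O(\delta)$ in an equicontinuity estimate on $\mu_x$, then optimises; alternatively one exploits that total variation distance of two invariant measures is controlled by the Wasserstein-type dual, and the heat-kernel contraction is uniform.

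For part (b), I would run the standard martingale/Itô argument. Let $z_r^x$ be the $\L_x$-diffusion, let $f:G\to\R$ be smooth, set $\bar f = f - \int_G f\,d\mu_x$, and let $g = g_x$ solve $\L_x g_x = \bar f$ with the sub-elliptic bound $\|g_x\|_{C^1}\le \|g_x\|_s' \le C(x)\|f\|_s$ for $s > 1 + \dim(G)/2$ (Sobolev embedding $H^{s}\hookrightarrow C^1$ requires exactly $s>1+\dim(G)/2$, which is the source of that hypothesis). Itô's formula gives $g_x(z_{t+T}^x) - g_x(z_t^x) = \int_t^{t+T} \L_x g_x(z_r^x)\,dr + M_{t,t+T} = \int_t^{t+T}\bar f(z_r^x)\,dr + M_{t,t+T}$, where $M_{t,t+T} = \sum_i \int_t^{t+T} (Y_i g_x)(z_r^x)\,dW_r^i$ is a martingale. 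Hence $\frac1T\int_t^{t+T}\bar f(z_r^x)\,dr = \frac1T\big(g_x(z_{t+T}^x)-g_x(z_t^x)\big) - \frac1T M_{t,t+T}$. The boundary term is bounded in $L^\infty$ by $\frac{2}{T}\|g_x\|_\infty \le \frac{2}{T}C(x)\|f\|_s$, hence $o(1/\sqrt T)$. For the martingale term, the Itô isometry gives $\EE|M_{t,t+T}|^2 = \sum_i \EE\int_t^{t+T} |(Y_i g_x)(z_r^x)|^2\,dr \le T\, m\, \|g_x\|_{C^1}^2 \sup_i\|Y_i\|_\infty^2 \le T\, C(x)^2\|f\|_s^2$, so $\frac1T\|M_{t,t+T}\|_{L^2(\Omega)} \le C(x)\|f\|_s\,\frac{1}{\sqrt T}$. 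Combining the two contributions gives \eqref{llln-1}. The continuity of the constant in $x$ is inherited from the continuity of $x\mapsto C(x)$ established in the functional-analytic step.

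The main obstacle, and the part deserving the most care, is the locally uniform control $\|g_x\|_s \le C(x)\|f\|_s$ with $x\mapsto C(x)$ continuous: one must show that the sub-elliptic (H\"ormander) constants for $\L_x$ depend continuously on $x$. This is not automatic from pointwise H\"ormander's condition alone — the step/order of the bracket generating set could jump — but under the stated smoothness and boundedness of $Y_i$ it can be obtained either from an explicit quantitative H\"ormander estimate with constants depending only on $C^k$-norms of the vector fields and a uniform lower bound on the "bracket-generating" determinants over a neighbourhood, or, more cleanly, by assuming (as the paper does in the outline) that $\{\L_x\}$ is a continuous family of Fredholm operators with sub-elliptic estimates and appealing to continuity of the Fredholm pseudo-inverse under norm-continuous perturbation. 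Everything else is routine: Itô's formula, Itô isometry, and Sobolev embedding.
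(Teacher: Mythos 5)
Your part (b) argument is essentially the paper's: reduce to the Poisson equation $\L_x g_x = f - \mu_x(f)$, apply It\^o's formula to $g_x$ along the $\L_x$-diffusion so the $O(1/T)$ boundary term and the $O(1/\sqrt T)$ martingale term appear, bound $|dg_x|_\infty$ via the sub-elliptic estimate and Sobolev embedding $H^{s+\delta}\hookrightarrow C^1$ (which is where $s>1+\dim(G)/2$ enters), and inherit continuity of $C(x)$ from continuity of the sub-elliptic constants. That part is correct and faithful to the paper.

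For part (a) you take a genuinely different route from the paper, and it contains a gap you acknowledge but do not resolve. The paper works directly on the density side: $q(x,\cdot)$ solves $\L_x^*q=0$, one differentiates this identity in $x$ to get $\L_x^*(\partial_x q) = -(\partial_x\L_x^*)\,q =: g$, checks $\int_G g(x,y)\,dy = 0$, invokes the Fredholm alternative for $\L_x^*$ to solve for $\partial_x q$, and then uses the sub-elliptic estimates iteratively to bound $\sup_y|\partial_x q(x,y)|$ locally uniformly; the Lipschitz TV bound follows at once by testing against $|f|\le 1$. Your route instead uses the dual perturbation identity $\mu_{x'}(f)-\mu_x(f)=\mu_{x'}\big((\L_x-\L_{x'})g_x\big)$. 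The right-hand side needs two derivatives of $g_x$, so you get a bound in terms of $\|f\|_{2,\infty}$, not $\|f\|_\infty$, which is not the total variation quantity. You flag this and propose smoothing $f$ at scale $\delta$, paying $\delta^{-2}\|f\|_\infty$ in the Poisson bound against an $O(\delta)$ replacement error; optimising this balance gives only $|x-x'|^{1/3}$, i.e. H\"older and not Lipschitz continuity, so that fix as written would fail to prove (a). The correct repair of your route is the one you gesture at in passing but do not carry out: integrate by parts in the identity, writing $\mu_{x'}\big((\L_x-\L_{x'})g_x\big) = \int_G g_x\,(\L_x^*-\L_{x'}^*)q_{x'}\,dy$, which transfers all derivatives onto the smooth density $q_{x'}$, whose $W^{2,2}$-norm is locally bounded by the sub-elliptic estimates; together with $\|g_x\|_{L^2}\le A(x)\|f\|_{L^2}\lesssim \|f\|_\infty$ this does give the Lipschitz TV bound. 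In effect this integration by parts is exactly how one lands back on the paper's density-side argument, so the paper's approach is the cleaner of the two. Both you and the paper correctly flag (and neither fully quantifies) the remaining delicate point, that the sub-elliptic constants and the pseudo-inverse norm $A(x)$ depend locally uniformly and continuously on $x$.
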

\begin{remark}
Let $P^x(t, y, \cdot)$ denote the transition probability of $\L_x$.
 Suppose $\L_x$ satisfies Doeblin's condition. Then $\L_x$ has a unique invariant probability measure.
This holds in particular if $\L_x$ satisfies the strong H\"ormander's condition and $G$ is compact.
 The uniqueness follows from the fact that it has a smooth strictly positive density. 
 (  H\"ormander's condition ensures that any invariant measure has a smooth  kernel and the  kernel of its $L_2$ adjoint $\L^*$  contains a non-negative function. The density  is however not necessarily positive. )  Suppose that each $\L_x$ satisfies the strong H\"ormander's condition (c.f. Def. \ref{def-hormander}) and $G$ is compact. It is well know that the transition probability measures $P^x(t, y_0, \cdot)$, with any initial value $y_0$,  converges to  the unique invariant probability measure $\mu^x$ with an exponential rate which we denote by $C(x)e^{\gamma(x)t}$.  If $x$ takes values also in a compact space $N$, the  exponential rate  and the  constant in front of  the exponential rate can be taken to be independent of $x$. When $N$ is non-compact, we obviously need to make further assumptions on $\L_x$ for a uniform estimate. 
 \end{remark}

 Set
$$\begin{aligned}
\tilde X_0( \cdot, y)&=\f 12 \sum_{i=1}^{m_1}\nabla X_i(X_i)(y, \cdot)+X_0(y, \cdot), \\
\tilde Y_0(x, \cdot) &=\f 12 \sum_{i=1}^{m_2} \nabla Y_i(Y_i)(x, \cdot)+Y_0(x, \cdot).
\end{aligned}
$$
 Let $O$ be a reference point in $N$ and $\rho$ is the Riemannian distance from $O$.

\begin{assumption}
[Assumptions on $X_i$ and $N$]
\label{assumeX}
Suppose that $\tilde X_0$  and $X_i$ are $C^1$, where $i=1,\dots, m$. Suppose that {\bf one} of the following  two statements holds.
\begin{enumerate}
\item [(i)]
The sectional curvature of $N$ is bounded. There exists a constant  $K$ such that  $$\sum_{i=1}^m|X_i(x,y) |^2\le K(1+ \rho(x)), \quad\left  |X_0(x,y) \right| \le  K(1+ \rho(x)), \quad \forall x\in N, \forall y\in G.$$
 \item [(ii)] Suppose that  the square of the distance function on $N$ is smooth. Suppose that $$\f 12 \sum_{i=1}^m \nabla d \rho^2(X_i(\cdot,y), X_i(\cdot, y))+ d\rho^2 (\tilde X_0(\cdot,y ))\le  K+ K\rho^2(\cdot), \quad \forall y\in G.$$
\end{enumerate}
\end{assumption}

\begin{assumption}[Assumptions on $Y_i$ and $G$]
\label{assumeY}
We suppose that $G$ has bounded sectional curvature.
 Suppose that $\tilde Y_0$ and $Y_j$ are $C^2$ and bounded with  bounded first order derivatives. 
\end{assumption}

The following is extracted from  Theorem \ref{main-2}.

\begin{theorem} [Averaging Theorem]
\label{theorem2}
Suppose that there exists a family of  invariant probability measure  $\mu_x$ on $G$ that satisfies the conclusions of Theorem \ref{lln}.
Suppose the assumptions on $X_i$, $N$,   $Y_i$ and $G$ hold (Assumptions \ref{assumeX} and \ref{assumeY}).  Then as $\epsilon\to 0$,  the stochastic processes  $x_t^\epsilon$  converges weakly on $C([0,T],N)$ to a Markov process with generator~$\bar \L$.
\end{theorem}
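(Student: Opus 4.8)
The plan is to combine tightness of the slow variables with a martingale-problem identification of the limit, the ergodic averaging being supplied by the locally uniform law of large numbers of Theorem~\ref{lln}. First I would prove tightness of $\{x^\epsilon_\cdot:\epsilon\in(0,1]\}$ in $C([0,T];N)$. Under Assumption~\ref{assumeX} the estimates of \S\ref{estimates} give, uniformly in $\epsilon$, non-explosion of $x^\epsilon_t$, a moment bound $\sup_\epsilon\EE[\sup_{t\le T}(1+\rho(x^\epsilon_t))^p]<\infty$ for a suitable $p$, and a modulus of continuity $\EE[d(x^\epsilon_s,x^\epsilon_t)^p\wedge 1]\le C|t-s|^{p/2}$ on bounded subsets of $N$, where $d$ is the Riemannian distance on $N$. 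The singular factors $\epsilon^{-1/2}$ and $\epsilon^{-1}$ enter only the $y$-equation, so once $y$ is treated as an input the $x$-equation has $\epsilon$-independent coefficients, which is why these bounds are uniform; since $N$ is complete this yields tightness on $C([0,T];N)$, the non-smoothness of $\rho$ being handled via the approximating distance functions of \S\ref{estimates}.

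Next I would show that every weak limit point $\bar x$ of $x^\epsilon_\cdot$ solves the martingale problem for $\bar\L f(x):=\int_G\L^{(1)}_yf(x)\,\mu_x(dy)$, first for $f\in C^3_c(N)$, the case of $C^3$ functions with bounded derivatives being reduced to this one by localising at the exit time from a large geodesic ball and invoking the moment bound above. By It\^o's formula $M^{\epsilon,f}_t:=f(x^\epsilon_t)-f(x_0)-\int_0^t\L^{(1)}_{y^\epsilon_s}f(x^\epsilon_s)\,ds$ is an $\F_t$-martingale, so everything reduces to the averaging estimate
\[
\lim_{\epsilon\to0}\EE\Big|\int_0^t\big(\L^{(1)}_{y^\epsilon_s}f(x^\epsilon_s)-\bar\L f(x^\epsilon_s)\big)\,ds\Big|=0 .
\]
Granting it, the martingale property gives $\EE[(M^{\epsilon,f}_t-M^{\epsilon,f}_s)\,h(x^\epsilon_{s_1},\dots,x^\epsilon_{s_n})]=0$ for $0\le s_1<\dots<s_n\le s<t\le T$ and $h\in C_b(N^n)$; replacing $\int_s^t\L^{(1)}_{y^\epsilon_r}f\,dr$ by $\int_s^t\bar\L f\,dr$ at a cost tending to $0$ and passing to the limit along a weakly convergent subsequence — using the continuity of $x\mapsto\bar\L f(x)$, itself a consequence of the $C^3$-smoothness of the $X_i$ and the total-variation continuity $x\mapsto\mu_x$ of Theorem~\ref{lln}(a) — shows that $\bar x$ solves the martingale problem for $\bar\L$. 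Since the coefficients of $\bar\L$ are continuous in $x$, that martingale problem is well posed, and subsequential convergence is thereby upgraded to convergence of the whole family $x^\epsilon_\cdot$ to the Markov process generated by $\bar\L$.

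To prove the averaging estimate I would discretise $[0,t]$ into blocks $[t_k,t_{k+1}]$ of length $\Delta=\Delta(\epsilon)$ with $\epsilon\ll\Delta\to0$, condition on $\F_{t_k}$, and freeze the slow variable at $x^\epsilon_{t_k}$. A block contributes three errors: (i) replacing $x^\epsilon_s$ by $x^\epsilon_{t_k}$ inside $\L^{(1)}_\cdot f(\cdot)$ and inside $\bar\L f$, bounded by $\sup_{s\in[t_k,t_{k+1}]}\EE[d(x^\epsilon_s,x^\epsilon_{t_k})\wedge1]\le C\sqrt\Delta$ times the local Lipschitz constants in $x$ of these two functions (the Lipschitz property of $\bar\L f$ again using Theorem~\ref{lln}(a)); (ii) replacing $y^\epsilon$ on the block by the genuine $\L_{x^\epsilon_{t_k}}$-diffusion $z^{x^\epsilon_{t_k}}$ started from $y^\epsilon_{t_k}$; (iii) writing $g_x(y):=\L^{(1)}_yf(x)-\bar\L f(x)$ (so $\int_Gg_x\,d\mu_x=0$) and substituting $u=(s-t_k)/\epsilon$, the term $\big|\tfrac{\epsilon}{\Delta}\EE\int_0^{\Delta/\epsilon}g_{x^\epsilon_{t_k}}(z^{x^\epsilon_{t_k}}_u)\,du\big|$, which by Theorem~\ref{lln}(b) is at most $\EE\big[C(x^\epsilon_{t_k})\,\|g_{x^\epsilon_{t_k}}\|_s\big]\sqrt{\epsilon/\Delta}$ and tends to $0$ since $\Delta/\epsilon\to\infty$, the continuous weight $C(\cdot)$ being integrable against the moment bound above. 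Summing the $t/\Delta$ blocks, (i) and (iii) contribute $O(\sqrt\Delta)+O(\sqrt{\epsilon/\Delta})\to0$, and the estimate follows once (ii) is controlled.

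I expect (ii) to be the main obstacle: the fast motion genuinely interacts with the slowly drifting $x^\epsilon$, so a naive Gronwall comparison of $y^\epsilon$ with the frozen-coefficient diffusion (both well-posed $G$-valued diffusions by Assumption~\ref{assumeY}) over a block of rescaled length $\Delta/\epsilon\to\infty$ blows up like $e^{c\Delta/\epsilon}$. What makes it work is quantitative mixing of $\L_x$ — the exponential ergodic rate $C(x)e^{\gamma(x)t}$ discussed after Theorem~\ref{lln} — forcing the conditional law of $y^\epsilon_s$ to relax towards $\mu_{x^\epsilon_s}$ on a time scale much shorter than $\Delta/\epsilon$ regardless of $y^\epsilon_{t_k}$, after which the total-variation continuity of $x\mapsto\mu_x$ absorbs the residual $O(\sqrt\Delta)$ displacement of the frozen point; this is what imposes the two-sided constraint $\epsilon\ll\Delta\ll1$. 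One may instead bypass the pathwise comparison altogether: solve the Poisson equation $\L_x u_x=\L^{(1)}_yf(x)-\bar\L f(x)$ on the compact fibre $G$ — solvable with an $x$-regular solution by the Fredholm and sub-elliptic theory of \S\ref{Birkhoff} — and apply It\^o's formula to $\epsilon\,u_{x^\epsilon_t}(y^\epsilon_t)$ along the coupled system, whereupon every term of the resulting identity is of order $\epsilon$ and the averaging estimate is immediate.
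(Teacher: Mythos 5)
Your overall architecture — tightness via the estimates of \S\ref{estimates}, Stroock--Varadhan martingale identification of any weak limit, and a block discretisation with the slow variable frozen on each block so the locally uniform LLN can be applied — is exactly what the paper does (Lemma \ref{lemma4.1}, Proposition \ref{averaging-proposition}, Theorem \ref{main-2}). Your three-way error decomposition (slow displacement on a block, freezing of the fast equation, ergodic averaging) matches the six-term expansion in Proposition \ref{averaging-proposition}, and items (i) and (iii) are handled the same way (Lemma \ref{Riemann-sum}, Lemma \ref{lemma2-3-2}). Where you diverge is on item (ii), and your diagnosis there is slightly off: you dismiss the naive Gronwall comparison because $e^{c\Delta/\epsilon}$ blows up, and propose either a quantitative mixing argument or a Poisson corrector instead. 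The paper in fact does make the Gronwall comparison work (Lemma \ref{freeze-lemma}), without any mixing input, by the choice of intermediate scale $\Delta t_i \sim \epsilon|\ln\epsilon|^a$ with $a\in(0,\tfrac12)$: then $\Delta t_i/\epsilon = |\ln\epsilon|^a \to\infty$ (so the LLN kicks in), while $e^{C(\Delta t_i/\epsilon + (\Delta t_i/\epsilon)^2)} = e^{C|\ln\epsilon|^{2a}}$ is $o(\epsilon^{-\beta})$ for every $\beta>0$, so the prefactor $\sqrt{\Delta t_i}\sim\sqrt{\epsilon}|\ln\epsilon|^{a/2}$ still drives the product to zero at rate $\epsilon^\delta$, $\delta<\tfrac12$. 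The point you missed is that the freezing error does not need to be good on a polynomial block $\Delta\sim\epsilon^\alpha$; a logarithmic enlargement over $\epsilon$ already suffices for the LLN, and on that scale Gronwall is harmless. (Lemma \ref{freeze-lemma} also needs a stopping-time cutoff against the conjugate radius of $G$ to cope with the non-smoothness of $\rho$, which you did not mention.)

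Your Poisson-corrector alternative for item (ii) is a genuine, classical, and valid route: solve $\L_x u_x = \L^{(1)}_\cdot f(x) - \bar\L f(x)$ on $G$ (solvable with $x$-smooth solution by the Fredholm and sub-elliptic theory of \S\ref{Birkhoff}, since the right-hand side has zero $\mu_x$-mean), and apply It\^o to $\epsilon\,u_{x^\epsilon_t}(y^\epsilon_t)$; the $\L_x$-term cancels the centred integrand at order one, and everything else is $O(\sqrt\epsilon)$. This would replace both Lemma \ref{freeze-lemma} and the block discretisation entirely and is in some respects cleaner, at the cost of needing joint regularity of $(x,y)\mapsto u_x(y)$ and moment control to justify the It\^o terms uniformly in $\epsilon$. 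The paper's Gronwall-with-log-blocks argument is more elementary and only uses the $L^2$ LLN rate of Theorem \ref{lln}(b). Your sketch of "quantitative mixing absorbing the residual displacement" is plausible heuristics but is not what the paper does and is not developed to the point of being a proof; drop it in favour of either the paper's log-scale Gronwall or the corrector.
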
 

\begin{remark}
\begin{enumerate}
\item [(i)]
 If $f$ is a smooth function on $N$ with compact support then 
\begin{equation}
\label{limit}
\bar \L  f(x)=\int_G \left(\f 12 \sum_{i=1}^{m_1} X_i^2(\cdot, y) f+ X_0 (\cdot, y) f\right) (x)\, \mu_x(dy).
\end{equation}
See the Appendix in \S\ref{proof} for a sum of squares of vector fields decomposition of~$\bar \L$.
\item[(ii)]   Under Assumptions \ref{assumeX}  there exists a unique global solution $x_t^\epsilon$ for each initial value $(x,y)$. We also have uniform estimates on the distance  $\rho(x_s^\epsilon, x_t^\epsilon)$ which leads to the conclusion that the family $\{x_\cdot^\epsilon, \epsilon>0\}$ is tight. Also we may conclude that the moments of the solutions are bounded uniformly on any compact time interval and in $\epsilon$ for $\epsilon\in(0, 1]$. Such estimates  are given in \S\ref{estimates}. 

\item [(iii)] Under Assumptions \ref{assumeX}- \ref{assumeY}   we may approximate  the fast motion, on sub-intervals $[t_i, t_{i+1}]$, by freezing the slow variables and obtain a family of Markov processes with generator $\L_x$. 
The size of the sub-intervals must be of size $o(\epsilon)$  for the error of the approximation to converge to zero as $\epsilon\to 0$, and large on the scale of $\f 1\epsilon$ for the ergodic average to take effect.  
\end{enumerate}\end{remark}

\begin{problem}
Suppose that $\L_x$ satisfies H\"ormander's condition.  Then the kernel of $\L_x^*$ is finite dimensional. Without assuming the uniqueness of the invariant probability measures, it is possible
to define a projection to the kernel of $\L_x$, by pairing up a basis $\{u_i(x)\}$  of $\ker (\L_x)$ with a dual basis $\pi^i(x)$ of $\ker(\L_x^*)$ and this leads to a family of projection operators $\Pi(x)$.
To obtain a locally uniform version of this, we should consider the continuity of $\Pi$ with respect to $x$. 
Let us consider the simple case of  a family of Fredhom operators $T(x)$ from a Hilbert space $E$ to a Hilbert space $F$. It is well known that the dimension of their kernels may not be a continuous function of $x$, but the Fredholm index of $T(x)$ is a continuous function if $x$ in the space of bounded linear operators \cite{Atiyah}. 
See also \cite {Erp-index,Erp-index-foliated} for non-elliptic operators. Given that the projection $\pi(x)$ involves both the kernel and the co-kernel, it is reasonable to expect that $\Pi(x)$ is continuous in $x$. The question is whether this is true and more importantly whether in this situation there is a local uniform Law of large numbers.
\end{problem}

\section{ Examples}
 \label{examples}

 We describe  some motivating examples, the first being  dynamical  descriptions for Brownian motions,
 the second being the convergence of metric spaces. The overarching question concerning the second is: given a family of metric spaces converging to another in measured Gromov Hausdorff topology, can we give a good dynamical description for their convergence?
 What can one say about the associated singular operators? These will considered in terms of stochastic dynamics. See \cite{Ikeda-Ogura,Ogura-Taniguchi96} and \cite{Li-OM-1} concerning collapsing of Riemannian manifolds. The third example is a model on the principal bundle. 
 These singular perturbation models were introduced in \cite{Li-averaging, Li-geodesic, Li-homogeneous, Li-OM-1},  where the perturbations were chosen carefully for diffusion creation. The reduced systems are random ODEs for which a set of limit theorems are available,  and the perturbations are chosen so  that one could bypass the stochastic averaging procedure and work directly on the faster scale for diffusion creation $[0, \f 1 {\epsilon}]$. Theorem \ref{main-2} allows us to revisit these models to include more general perturbations, in which the effective limits on $[0, 1]$ are not trivial. It also highlights from a different angle the choice of the perturbation vector in the models which we explain below.

\subsection{A dynamical description for Brownian motions}\label{dynamical}

In 1905, Einstein, while working on his atomic theory,  proposed the diffusion model for  describing the density of the probability for finding a particle at time $t$ in a position $x$.
A similar model was proposed by Smoluchowski with a force field. Some years later Langevin (1908) and Ornstein-Uhlenbeck (1930) \cite{ Ornstein-Uhlenbeck}  proposed a dynamical model for Brownian motion for time  larger than the relaxation time $\f 1 \beta$: 
\begin{equation*}
\left\{{\begin{split}
{\dot x(t)}&{=v(t)}\\
{\dot v(t)}&{=-\beta v(t) dt+\sqrt D \beta dB_t+\beta b(x(t))dt}
\end{split}}\right.
\end{equation*}
where $B_t$ a one dimensional Brownian motion and $b$ a vector field.
This equation is stated for $\R$ with $\beta, D$ constants and  was studied by Kramers \cite{Kramers}  and Nelson \cite{Nelson}.  The model is on the real line, there exists only one direction for the velocity field. The  magnitude of $v(t)$ together with the sign changes rapidly.

The second order differential equations for unit speed geodesics, on a manifold $M$,  are equivalent to  first order ODEs on the space of orthonormal frames of $M$, this space will be denoted by $OM$. Suppose that we rotate the direction of the geodesic uniformly, according to the probability distribution of 
a Brownian motion on $SO(n)$, while keeping its magnitude fixed to be $1$,  and suppose that the rotation is at the scale of $\f 1\epsilon$ then the projections to $M$ of the solutions of the equations on $OM$ converge to a fixed point as $\epsilon \to 0$. But if we further tune up the speed of the rotation, these motions converge to a scaled Brownian motion, whose scale is given by an eigenvalue of the fast motion on $SO(n)$.  See \cite{Li-geodesic}. An extension to manifolds was first studied 
 \cite{Dowell} followed  by \cite{Bismut-Lebeau}. That in   \cite{Dowell} is different from that in {Li-geodesic}, which is also followed up in \cite{Angst-Bailleul-Tardif} where the authors iremoved the geometric curvature restrictions in \cite{Li-geodesic}. See also \cite{Birrell-Hottovy-Volpe-Wehr} for a local coordinate approach and  more recently \cite{Duong-Lamacz-Peletier-Sarma}.
Assume the dimension of $M$ is greater than $1$. The equations, \cite{Li-geodesic}, describing this are as following:
\begin{equation}\label{ou-1}\left\{
\begin{split}du_t^\epsilon&= H_{u_t^\epsilon}(e_0)\,dt
+{1\over \sqrt \epsilon} \sum_{k=1}^NA_k^*(u_t^\epsilon)\circ dW_t^k+ A_0^*
(u_t^\epsilon) \,dt,
 \\ u_0^\epsilon&=u_0.
\end{split}\right.
\end{equation}
where $\{A_1,\dots, A_N\}$ is an o.n.b. of $\so(n)$, and $ A_0\in \so(n)$. The star sign denotes the corresponding vertical fundamental vector fields and $H(u)(e_0)$ is the horizontal vector field corresponding to a unit vector $e_0$ in $\R^n$. This following theorem is taken from \cite{Li-geodesic}.

{\bf Theorem 2A.}    The position part of $u_{\f t \epsilon}^\epsilon$,  which we denote by $(x_{t\over \epsilon}^\epsilon) $, converges to a Brownian motion on $M$ with generator ${4\over n(n-1)}\Delta$.
Furthermore  the parallel translations along these smooth paths of $(x_{t\over \epsilon}^\epsilon)$ converge to stochastic parallel translations along the H\"order continuous sample paths of the effective scaled Brownian motion. 

 The conservation law in this case is the projection $\pi$, taking a frame to its base point, using which we obtain the following reduced system of slow-fast SDEs:
  \begin{equation*}\left\{\begin{aligned}
& \f {d} {dt} \tilde x_t^\epsilon=H_{\tilde x_t^\epsilon} (g_{\f t \epsilon} e_0),
 \quad  \tilde x_0^\epsilon=u_0,\\
&dg_t=  \sum_{k=1}^m{g_t} A_k \circ dw_t^k+g_t A_0\, dt, \quad g_0=Id. 
\end{aligned}\right.
\end{equation*}
The slow variable does not have a stochastic part,  the averaging equation is given by the average vector field
$\int_{SO(n)} H(ug)(e) dg$, where $dg$ is the Haar measure, and vanishes. Hence we may observe the
slow variable on a faster scale and consider $x^\epsilon_{\f t \epsilon} $. 

In section \ref{dynamical2} we use the general results obtained later  to study two generalised models.

\subsection{Collapsing of manifolds}
Our  overarching question is how the stochastic dynamics  describe the convergence of metric spaces. Let us consider a simple example: $SU(2)$ which can be identified with the sphere $S^3$. The Lie algebra of $SU(2)$ is given by the Pauli matrices
    $$X_1=\left(\begin{matrix} i &0\\0&-i
\end{matrix}\right), \quad X_2=\left(\begin{matrix} 0 &1\\-1&0
\end{matrix}\right),
 \quad X_3=\left(\begin{matrix} 0 &i\\i&0
\end{matrix}\right).$$
By declaring $\{\f 1 {\sqrt \epsilon} X_1, X_2, X_3\}$  an orthonormal frame we define  Berger's metrics $g^\epsilon$. Thus $(S^3, g^\epsilon)$ converges to $S^2$. 
  They are the first known family of  manifolds which collapse to a lower dimensional one,
 while keeping the sectional curvatures uniformly bounded  (J. Cheeger).
 Then all the operators in the sum  
$$\Delta_{S^3}^{\epsilon} =\f 1 \epsilon (X_1)^2+(X_2)^2 +(X_3)^2=\f 1 \epsilon \Delta_{S^1}+\Delta_H$$
commute, the eigenvalues satisfy the relation $\lambda_3(\Delta_{S^3}^{\epsilon})= \f 1 \epsilon \lambda_1(\Delta_{S^1})+\lambda_2(\Delta_H)$. The non-zero eigenvalues of $\Delta_{S^1}$ flies away and the eigenfunctions of $\lambda_1=0$ are function on  the sphere $S^2(\f 12)$ of radius $\f 12$, the convergence of the spectrum of $\Delta_{S^3}^\epsilon$ follows.  See \cite{Tanno-79},  \cite{Berard-Bergery-Bourguignon} \cite{Urakawa86} for discussions on the spectrum of Laplacians on spheres, on homogeneous Riemannian manifolds and on Riemannian submersions with totally geodesic fibres. 

We  study   $${\L^\epsilon:=\f 1 \epsilon \Delta_{S^1}+Y_0}$$ 
in which $\Delta_{S^1}$ and $Y_0$ do not commute. Take for example, 
 $Y_0 =aX_2+b X_3$ where $|Y_0|=1$.
Let $\pi(z,w)=\f 1 2(|w|^2-|z|^2, z \bar w)$ be the Hopf map. 
Let  $u_t^\epsilon$ be an $\L^\epsilon$-diffusion with the initial value $u_0$. Then  $\pi(u_{\f t \epsilon} ^\epsilon)$  converges to a BM on $S^2(\f 12)$, 
{scaled by $\f  12$}. See \cite{Li-homogeneous}. See also  \cite{Ogura-Taniguchi96} for related studies. It is perhaps interesting  to observe that $\L^\epsilon$ satisfies H\"ormander's condition for any $Y_0\not =0$. Later we see that this fact is not  an essential feature of the problem.
The model  on $S^3$ in \cite{Li-OM-1}  is a variation of this one.

\subsection{Inhomogeneous scaling of Riemannian metrics}\label{scaling}
 If a manifold is given a family of Riemannian metrics depending on a small parameter $\epsilon>0$, the  Laplacian operators $\Delta^\epsilon$
 is a family of singularly perturbed operators. We might ask the question whether their spectra converge.  More generally let us consider a family of second order differential operators $\L^\epsilon=\f 1 \epsilon \L_0+\L_1$, each
 in the form of a finite sum of squares of smooth vector fields with possibly a first order term.
As $\epsilon \to 0$, the  corresponding Markov process does not converge in general.
In the spirit of Noether's theorem, to see a convergent slow component we expect to see some symmetries for the system $\L_0$.
On the other hand, by a theorem of  S. B. Myers and N. E. Steenrod \cite{Myers-Steenrod},   the set of all isometries of a Riemannian manifold $M$ is a Lie
group under composition of maps, and  furthermore the isotropy subgroup $\Iso_o(M)$ is compact.  See also S. Kobayashi and K. Nomizu \cite{Kobayashi-NomizuI}.
We are led to study homogeneous manifolds $G/H$, where $G$ is a smooth Lie group and $H$ is a compact sub-group of  $G$. We denote by $\g$ and $\h$ their respective Lie algebras.

Let  $\g$ be endowed an $\Ad(H)$-invariant inner product and take $\m=\h^\perp$. Then
$G/H$ is a reductive homogeneous manifold, in the sense of Nomizu, by which we mean $\Ad(H)(\m)\subset \m$. This is a different  from the concept of a reductive Lie group, where
 the adjoint representation of  the Lie group $G$ is completely reducible.  (Bismut studied  a natural deformation of the standard Laplacian on a compact Lie group $G$ into a hypoelliptic operator on $TG$ see \cite{Bismut-Lie-group}.)
We assume that the real Lie group $G$ is smooth, connected, not necessarily compact,  of dimension $n$ and $H$ a closed connected proper subgroup of dimension at least one. 
 We identify elements of the Lie algebra with left invariant vector fields.
 
We generate a family of Riemannian  metrics on $G$ by scaling the $\h$ directions by $\epsilon$.  
Let $\{A_1, \dots,  A_p, Y_{p+1}, \dots, X_N\}$ be an orthonormal basis of $\g$ for an inner product extending an orthonormal basis $\{A_1, \dots, A_p\}$ of $\h$ with the remaining vectors  from $\m$.
By declaring $$\left\{{1\over \sqrt \epsilon} A_1, \dots, {1\over \sqrt \epsilon} A_p, Y_{p+1}, \dots Y_N\right\}$$ 
   an orthonormal frame, we obtain a family of  left invariant Riemannian metrics.
   Let us consider the following second order differential operator,  related to the re-scaled metric:
   $$\L^\epsilon=\f 1 {2\epsilon} \sum_{k=1}^{m_2} (A_k)^2+\f 1 \epsilon A_0+Y_0,$$
   where $A_k\subset \h$ and $Y_0 \in \m$ is a unit vector. 
   This leads to the following family of equations, where $\epsilon \in (0,1]$,
$$dg_t^\epsilon ={1\over \sqrt \epsilon} \sum_{k=1}^{m_2}   A_k(g_t^\epsilon) \circ dB_t^k +{1\over \epsilon} A_0(g_0^\epsilon)dt
+Y_0(g_t^\epsilon) dt, \quad g_0^\epsilon =g_0.$$ 
These SDEs  belong to the following family of equations
\begin{equation*}\label{1-2}
dg_t =\sum_{k=1}^{m_2}  \gamma A_k(g_t) \circ dB_t^k +\gamma \, A_0(g_0)\,dt +
\delta Y_0(g_t) dt.
\end{equation*}
The solutions of the latter family of equations,  with parameters $\gamma$ and $\delta$ real numbers,  interpolate between translates of a one parameter subgroups of $G$ and diffusions on~$H$. 
Our  study of $\L^\epsilon$ is related to the concept o `taking the adiabatic limit' \cite{Berline-Getzler-Vergne, Mazzeo-Melrose}.

Let $(g_t^\epsilon)$ be a Markov processes with Markov generator $\L^\epsilon$, and set $x_t^\epsilon=\pi(g_t^\epsilon)$ where $\pi$ is the map taking an element of $G$ to the coset $gH$.
 Then $\L^\epsilon=\f 1 \epsilon \L_0+Y_0$ where $\L_0=\f 1 {2} \sum_{k=1}^{m_2} (A_k)^2+A_0$.
   We will assume that $\{A_k\}\subset \h$ are bracket generating.
   Scaled by $1/\epsilon$, the Markov generator of  $(g_{t\over \epsilon}^\epsilon)$ is precisely $\f 1\epsilon \L^{\epsilon}$.

The operators $\L^\epsilon$ are not necessarily hypo-elliptic in $G$, and they will not expected to converge in the standard sense.  
Our first task is to understand the nature of the perturbation and to extract from them a family of first order random differential operators, $ \tilde \L^\epsilon$,  which converge and which have the same orbits as $ \L^\epsilon$,  the `slow motions'.  The reduced operators, $\f 1 \epsilon\tilde\L^\epsilon$, describe the motion of the orbits  under  `perturbation'.

Their effective limit is either a one parameter sub-groups of $G$ in which case our study terminate, or a fixed point in which case we study the fluctuation dynamics on the time scale $[0, \f 1 \epsilon]$. On the Riemmanian homogeneous manifold, if $G$ is compact,  the effect limit on $G$ is a geodesic at level one and a fixed point at level two.
 On the scale of $[0, \f 1 \epsilon]$ we would  consider $\f 1 {\epsilon} \L_0$ as perturbation. It is counter intuitive to consider the dominate part as the perturbation. But the perturbation, although very large in magnitude, is fast oscillating. The large oscillating motion get averaged out, leaving an effective motion corresponding to a second order differential operator on~$G$.

 This problems breaks into three parts:  separate the slow and the fast variable, which depends on the principal bundle structure of the homogeneous space, and determine the natural scales;
 the convergence of the solutions of the reduced equations which is a family of random ODEs;
 finally the buck of the interesting study is to  determine the effective limit, answering the question whether it solves an autonomous equation.
 
 It is fairly easy to see that $x_t^\epsilon$ moves relatively slowly. The speed at which $x_t^\epsilon$ crosses $M$ is expected to depend on the specific vector $Y_0$, however in the case of $\{A_1, \dots, A_p\}$ is an o.n.b. of $\h$ and $A_0=0$,
they depend only on the $\Ad(H)$-invariant component of $Y_0$.

  The separation of slow and fast variables are achieved by first projecting the
   motion down to $G/H$ and then horizontally lift the paths back (a non-Markovian procedure), exposing the action in the fibre directions.  The horizontal process thus obtained is the `slow part' of   $g_t^\epsilon$  and will be denoted by $u_t^\epsilon$.
 It is easy  to see that the reduced dynamic  is given by
$$\dot u_t^\epsilon =\Ad(h_{\f t \epsilon})(Y_0)(u_t^\epsilon).$$
where $h_t$ has generator $\f 12 \sum (A_i)^2+A_0$. 

 If $\{A_0, A_1, \dots, A_m\}$ generates
the vector space $\h$, the differential operator $\f 12 \sum (A_i)^2+A_0$ satisfies H\"ormander's condition in which case the invariant probability measure is the normalised Haar measure.
Then $u_t^\epsilon$ converges to the solution of the ODE:
$$\f d {dt}\bar u_t = \int_H \Ad(h)(Y_0)dh.$$
Let us take an $\Ad(H)$ invariant decomposition of $\m$, $\m=\m_0+\m_1$ where $\m_0$ is the vector space of invariant vectors and $\m'$ is its orthogonal complement. Then 
$$\int_H \Ad(h)(Y_0)dh={Y_0}^{\m_0}$$
where the superscript $\m_0$ denote the $\m_0$ component of $Y$. This means that the dynamics is a fixed point if and only if $Y_0^{\m_0}=0$. 

In \cite{Li-homogeneous} we take $Y_0^{\m_0}=0$ and answered this question by a multi-scale analysis and studied directly the question concerning $Y_0\in \tilde \m$, without having to go through stochastic averaging.
Theorem 1 makes this procedure easier to understand. 
Then we consider the dynamics on $[0, \f 1 \epsilon]$. The reduced first order random differential operators give rise to second order differential operators by the action of the Lie bracket.

\subsection{Perturbed dynamical systems  on Principal bundles}\label{averaging}
In the examples  described earlier, we have a perturbed dynamical system on a manifold $P$. On $P$ there is an action by a Lie group $G$, and the  projection to $M=P/G$ is a conservation law.
  We then study the convergence of the slow motion, the projection to $M$, and their horizontal lifts.  
  More precisely we have a principal bundle with fibre the Lie group $G$.   To describe these motions we consider the kernels of the differential of the projection $\pi$: they are  called the vertical tangent spaces and will be denoted by $VT_uP$. Any vector field 
 taking values in the vertical tangent space is called a vertical vector field, the Lie-bracket of
 any two vertical vector fields is vertical. A smooth choice of the complements of the vertical spaces,
   that are right invariant, determines a connection. These complements are called the horizontal spaces. The ensemble is denoted by $HT_uP$ and called the horizontal bundle.    From now on we assume that we have chosen such a horizontal space. A vector field taking values in the horizon tangent spaces is said to be a horizontal vector field.
Right invariant horizontal vector fields are specially interesting, they  are precisely the horizontal  lifts of  vector fields on $M$.

Let $\pi: P\to M$ denote the canonical projection taking an element of the total space $P$ to the corresponding element of the base manifold. Also  let $R_g: P\to P$ denote  the  right action by $g$, for simplicity we also write
$ug$, where $u\in P$, for $R_gu$.
A  connection on a principal bundle $P$ is a splitting of the  tangent bundle
$T_uP=HT_uP+VT_uP$ where $VT_uP$ is the kernel of the differential of t$\pi$. Let $\g$ denote the  Lie algebra of $G$. For any $A\in \g$ we define
$$A^*(u) =\lim_{t\to 0} R_{ \exp(tA)}u.$$
The splitting mentioned earlier  is in one to one correspondence  with  a connection 1-form, by which we mean a map
$\varpi: T_uP\to \g $ with the following properties:
$$(R_g)^*\varpi=\ad(g^{-1})\varpi, \quad \varpi(A^*)\equiv A.$$
This splitting also determines a horizontal lifting map $\h_u$ at $u\in P$ and a family of 
horizontal vector fields $H_i$. If $\{e_1, \dots e_n\}$ is an orthonormal basis of $\R^n$, where $n=\dim(M)$, we set $H_i(u)=\h_u(ue_i)$. If $\{A_1, \dots , A_N\}$ is an orthonormal basis of the Lie algebra
$\g$,  then at every point $u$, $\{H_1(u), \dots, H_n(u), A_1^*(u), \dots, A_N^*(u)\}$ is a basis of $T_uP$. We give $P$ the Riemannian metric so that the basis is orthonormal.

 Any stochastic differential equation (SDE) on $P$ are of the following form, where $\beta$ and $\gamma$ are two real positive  numbers and $\sigma_j^k$ and $\theta_j^k$ are $BC^3$ functions on $P$.

 $$\begin{aligned} du_t=&\beta \sum_{k=1}^{m_1} \left(\sum_{i=1}^n \sigma_k^i(u_t) H_i(u_t)\right)\circ dB_t^k+
\beta^2 \sum_{i=1}^n \sigma_0^i(u_t) H_i(u_t)dt\\
& +\gamma^2\sum_{k=1}^{m_2}\left( \sum_{j=1}^N \theta_k^j(u_t) A_j(u_t)\right)\circ dW_t^k+
\gamma \sum_{j=1}^N\theta_0^j(u_t) A_j(u_t)dt.\end{aligned}$$
Set $X_k= \sum_{i=1}^n \sigma_k^i \, H_i,$ and $ Y_k= \sum_{j=1}^N \theta _k^j\, A_j$. Then the equation is of the form
  $$ du_t=\beta\, \sum_{k=1}^{m_1} X_k(u_t)\circ dB_t^k+
\beta^2\; X_0(u_t)\;dt +\gamma \;\sum_{k=1}^{m_2}Y_k \circ dW_t^k+
\gamma^2\; Y_0(u_t)dt.$$
The solutions are Markov processes with Markov generator
$$\beta^2\left( \sum_{k=1}^{m_1} (X_k)^2+X_0 \right) +\gamma^2\left( \sum_{k=1}^{m_2} (Y_k)^2+Y_0 \right). 
$$
We observe that  the projection of the second factor vanishes, so if $\beta=0$, then  $\pi(u_t)=\pi(u_0)$ and $\pi$ is a conservation law. The equation with
small $\beta$ is a stochastic dynamic whose orbits deviate slightly  from that of the initial value $u_0$.
If on the other hand,
$X_i$ are vector fields invariant under the action of the group, and $\gamma=0$ then the projection $\pi(u_t)$ is an autonomous SDE on the manifold $M$.

Let us take $\beta=1$ and $\gamma=\f 1 {\sqrt \epsilon}$.
$$\left\{\begin{aligned} du_t^\epsilon&=\sum_{k=1}^{m_1} \left(\sum_{i=1}^n \sigma_k^i(u_t^\epsilon ) H_i(u_t^\epsilon)\right)\circ dB_t^k+
 \sum_{i=1}^n \sigma_0^i(u_t^\epsilon ) H_i(u_t^\epsilon)\;dt\\
 & +\f 1{ \sqrt{ \epsilon}}
\sum_{k=1}^{m_2}\left( \sum_{j=1}^N \theta_k^j(u_t^\epsilon) A_j^*(u_t^\epsilon)\right)\circ dW_t^k+
\f 1 {\epsilon} \sum_{j=1}^N \theta_0^j(u_t^\epsilon ) A_j^*(u_t^\epsilon)dt,\\
u_0^\epsilon&=u_0.\end{aligned}\right.$$
We proceed to compute the equations for the slow and for the fast variables.
  Let $x_t^\epsilon=\pi(u_t^\epsilon)$. Then $x_t^\epsilon$ has a horizontal lift, see e.g. \cite{Emery, Arnaudon-homogeneous, Elworthy-book}. See also \cite{Elworthy-LeJan-Li-book} and \cite{Elworthy-LeJan-Li-book-2}.
 Let $TR_g$ denote the differential of $R_g$.  For $k=0,1, \dots, m_1$, set $$X_k(ug)=\sum_{i=1}^p \sigma_k^i(ug )\, TR_{g^{-1}}H_i(ug).$$
 
 Below we deduce an equation for $x_t^\epsilon$ which is typically not autonomous.
  \begin{lem}
  The horizontal lift processes satisfy the following system of slow-fast SDE's:
  \begin{equation}\begin{aligned}
 d\tilde x_t^\epsilon&=\sum_{k=1}^{m_1} X_k\left(\tilde x_t^\epsilon g_t^\epsilon \right)\circ dB_t^k+
X_0\left(\tilde x_t^\epsilon g_t^\epsilon \right)\;dt, \quad \tilde x_0^\epsilon=g_0\\
dg_t^\epsilon
&=\f 1{ \sqrt{ \epsilon}}
\sum_{k=1}^{m_2}\left( \sum_{j=1}^N \theta_k^j(\tilde x_t^\epsilon g_t^\epsilon) A_j^*(g_t^\epsilon)\right)\circ dW_t^k+
\f 1 {\epsilon} \sum_{j=1}^N \theta_0^j(\tilde x_t^\epsilon g_t^\epsilon) A_j^*(g_t^\epsilon)\,dt, \; g_0^\epsilon=id.
\end{aligned}
\end{equation}
  \end{lem}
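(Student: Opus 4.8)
The plan is to decompose the equation for $u_t^\epsilon$ using the connection splitting $T_uP = HT_uP \oplus VT_uP$, and then to rewrite everything in terms of the horizontal lift of the projected process. Write $x_t^\epsilon = \pi(u_t^\epsilon)$; by the standard theory of stochastic horizontal lifts on principal bundles (see \cite{Emery, Elworthy-book, Elworthy-LeJan-Li-book}), there is a unique horizontal lift $\tilde x_t^\epsilon$ of $x_t^\epsilon$ with $\tilde x_0^\epsilon = u_0$, and a process $g_t^\epsilon$ in $G$ with $g_0^\epsilon = \mathrm{id}$ such that $u_t^\epsilon = \tilde x_t^\epsilon g_t^\epsilon$, i.e.\ $u_t^\epsilon = R_{g_t^\epsilon}(\tilde x_t^\epsilon)$. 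The first step is therefore to record this decomposition and to note that $g_t^\epsilon = \varpi$-parallel-transport data, so that $dg_t^\epsilon$ is driven purely by the vertical part of $du_t^\epsilon$ conjugated appropriately.

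Next I would substitute $u_t^\epsilon = R_{g_t^\epsilon}(\tilde x_t^\epsilon)$ into the original Stratonovich SDE for $u_t^\epsilon$ and apply the Stratonovich chain rule (Itô's formula in Stratonovich form), which is legitimate because the maps involved are $C^3$ and the calculus is the ordinary one. The vertical fundamental vector fields $A_j^*$ at $u_t^\epsilon = \tilde x_t^\epsilon g_t^\epsilon$ push down under $TR_{(g_t^\epsilon)^{-1}}$ to the corresponding $A_j^*$ at $g_t^\epsilon$ up to the adjoint action — this is exactly the equivariance property $(R_g)^*\varpi = \mathrm{ad}(g^{-1})\varpi$ built into the connection form — and this accounts for the vertical equation for $g_t^\epsilon$: since the vertical vector fields $A_j^*$ are tangent to the fibres and the fibre through $\tilde x_t^\epsilon$ is $\tilde x_t^\epsilon G$, the vertical displacement is precisely $dg_t^\epsilon = \frac{1}{\sqrt\epsilon}\sum_k \bigl(\sum_j \theta_k^j(\tilde x_t^\epsilon g_t^\epsilon) A_j^*(g_t^\epsilon)\bigr)\circ dW_t^k + \frac{1}{\epsilon}\sum_j \theta_0^j(\tilde x_t^\epsilon g_t^\epsilon) A_j^*(g_t^\epsilon)\,dt$, reading the coefficients at the full point $u_t^\epsilon$. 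For the horizontal equation, I use that the horizontal vector fields $X_k = \sum_i \sigma_k^i H_i$ are equivariant in the sense encoded by the definition $X_k(ug) = \sum_i \sigma_k^i(ug)\,TR_{g^{-1}}H_i(ug)$, so that the horizontal part of the motion of $u_t^\epsilon$, transported back to $\tilde x_t^\epsilon$ by $TR_{(g_t^\epsilon)^{-1}}$, is driven by $X_k(\tilde x_t^\epsilon g_t^\epsilon)$, giving $d\tilde x_t^\epsilon = \sum_k X_k(\tilde x_t^\epsilon g_t^\epsilon)\circ dB_t^k + X_0(\tilde x_t^\epsilon g_t^\epsilon)\,dt$. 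A small check is that the cross terms — the quadratic variation corrections coming from the Stratonovich-to-Itô conversion when one substitutes the vertical motion into the horizontal coefficients — vanish or are absorbed, because the two driving Brownian motions $B$ and $W$ are independent and the horizontal coefficients $X_k$ depend on the vertical coordinate only through $g_t^\epsilon$, whose bracket with the $B$-driven part is zero.

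The main obstacle I expect is bookkeeping rather than conceptual: one must be careful that "horizontal lift" for a semimartingale $x_t^\epsilon$ that is itself only a projection of a process built from \emph{two} independent noises is well defined and has the claimed form, and that the differential $TR_{g^{-1}}$ correctly intertwines the fields at $u_t^\epsilon$ and at $\tilde x_t^\epsilon g_t^\epsilon$. The cleanest way to handle this is to verify directly that the pair $(\tilde x_t^\epsilon, g_t^\epsilon)$ defined by the claimed system satisfies $\tilde x_t^\epsilon g_t^\epsilon = u_t^\epsilon$: one checks that $R_{g_t^\epsilon}(\tilde x_t^\epsilon)$ solves the original SDE for $u_t^\epsilon$ with the right initial condition, again by the Stratonovich chain rule, and invokes pathwise uniqueness for that SDE (the coefficients are $BC^3$, hence locally Lipschitz, so strong uniqueness holds up to the explosion time, and on a principal bundle with the given metric one has no explosion in the vertical direction since $G$-translations are isometries). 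Then the decomposition of $du_t^\epsilon$ into its horizontal and vertical components reads off exactly the two displayed equations, completing the proof.
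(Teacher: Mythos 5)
Your primary route is essentially the paper's: define $g_t^\epsilon$ via $u_t^\epsilon=\tilde x_t^\epsilon g_t^\epsilon$, apply the Stratonovich product/chain rule to get $du_t^\epsilon = TR_{g_t^\epsilon}d\tilde x_t^\epsilon + (TL_{(g_t^\epsilon)^{-1}}dg_t^\epsilon)^*(u_t^\epsilon)$, apply the connection form $\varpi$ to isolate the fibre motion (using that right translation preserves horizontality, equivalent to the equivariance of $\varpi$ you invoke), and then read off $d\tilde x_t^\epsilon$ from the horizontal lift identity $\mathfrak h_u(uge_i)=TR_{g^{-1}}H_i(ug)$. Two small remarks: your worry about ``quadratic variation cross-terms'' is not needed — Stratonovich calculus obeys the ordinary chain rule and no correction terms appear, which is precisely why the paper works in Stratonovich form throughout; and the phrase about $A_j^*$ ``pushing down under $TR_{(g_t^\epsilon)^{-1}}$ to $A_j^*(g_t^\epsilon)$'' conflates two different maps — $TR_{(g_t^\epsilon)^{-1}}$ maps $T_{u_t^\epsilon}P\to T_{\tilde x_t^\epsilon}P$, whereas the fibre equation arises by left-translating the $\g$-valued form $\varpi(du_t^\epsilon)$ by $TL_{g_t^\epsilon}$ to land in $T_{g_t^\epsilon}G$, which is the step the paper spells out. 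Your proposed alternative — define $(\tilde x_t^\epsilon, g_t^\epsilon)$ by the claimed system, show $R_{g_t^\epsilon}\tilde x_t^\epsilon$ solves the original SDE, and invoke pathwise uniqueness — is a genuinely different and clean variant that sidesteps the need to differentiate the implicitly defined $g_t^\epsilon$, though it requires you to first establish well-posedness of the claimed system on $N\times G$; the paper's constructive derivation gives that for free.
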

  \begin{proof}
   Since $\tilde x_t^\epsilon$ and $u_t^\epsilon$ belong to the same fibre we may define
 $g_t^\epsilon \in G$  by $u_t^\epsilon=\tilde x_t^\epsilon g^\epsilon_t$. If $a_t$ is a $C^1$ curve in the lie group $G$ 
 $$ {d\over dt}|_{t}ua_t={d\over dr}_{|_{r=0}} ua_t a_t^{-1}a_{r+t} =(a_t^{-1}\dot a_t)^*(ua_t).$$
It follows that
$$du_t^\epsilon=TR_{g_t^\epsilon} d\tilde x_t^\epsilon+ (TL_{(g_t^\epsilon)^{-1}}dg_t^\epsilon)^*(u_t^\epsilon).$$
Since right translations of horizontal vectors are horizontal, 
$$TL_{(g_t^\epsilon)^{-1}}dg_t^\epsilon=\varpi(du_t^\epsilon) 
=\f 1{ \sqrt{ \epsilon}}
\sum_{k=1}^{m_1}\left( \sum_{j=1}^N \theta_k^j(u_t^\epsilon) A_j\right)\circ dW_t^k+
\f 1 {\epsilon} \sum_{j=1}^N \theta_0^j(u_t^\epsilon ) A_j\,dt$$
Hence, denoting by $A^*$ also the left invariant vector fields on $G$, we have an equation for $g_t^\epsilon$:
$$
dg_t^\epsilon
=\f 1{ \sqrt{ \epsilon}}
\sum_{k=1}^{m_2}\left( \sum_{j=1}^N \theta_k^j(u_t^\epsilon) A_j^*(g_t^\epsilon)\right)\circ dW_t^k+
\f 1 {\epsilon} \sum_{j=1}^N \theta_0^j(u_t^\epsilon ) A_j^*(g_t^\epsilon)\,dt.$$
Since $\pi_*(A_j)=0$ and by the definition of $H_i$ we also have,
$$ dx_t^\epsilon=\sum_{k=1}^{m_1} \left(\sum_{i=1}^p \sigma_k^i(u_t^\epsilon )\, (u_t^\epsilon e_i)\right)\circ dB_t^k+
 \sum_{i=1}^n \sigma_0^i(u_t^\epsilon ) (u_t^\epsilon e_i)\;dt.$$
Its horizontal lift is given by $d\tilde x_t=\h_{\tilde x_t} (\circ dx_t^\epsilon)$ and so we have the following SDE
$$\begin{aligned}
 d\tilde x_t^\epsilon&=\sum_{k=1}^{m_1} \left(\sum_{i=1}^p \sigma_k^i(u_t^\epsilon )\, \h_{\tilde x_t^\epsilon}(u_t^\epsilon e_i)\right)\circ dB_t^k+
 \sum_{i=1}^n \sigma_0^i(u_t^\epsilon )  \h_{\tilde x_t^\epsilon}(u_t^\epsilon e_i)\;dt. \end{aligned}$$
 Since
$\h_u(uge_i)=TR_{g^{-1}} \h_{ug} (uge_i)= TR_{g^{-1}} H_i(ug)$, we may rewrite the above equation in the following more convenient form:
\begin{equation}
 d\tilde x_t^\epsilon=\sum_{k=1}^{m_1} \left(\sum_{i=1}^p \sigma_k^i(\tilde x_t^\epsilon  g_t^\epsilon )\, TR_{g_t^\epsilon}^{-1} 
H_i(\tilde x_t^\epsilon  g_t^\epsilon)\right)\circ dB_t^k+
 \sum_{i=1}^n \sigma_0^i(\tilde x_t^\epsilon  g_t^\epsilon )  TR_{g_t^\epsilon}^{-1} H_i(\tilde x_t^\epsilon  g_t^\epsilon)\;dt.
\end{equation}
Finally we also rewrite the equation for the fast variable in terms of the fast and slow splitting:
\begin{equation}
\label{left-BM}
dg_t^\epsilon
=\f 1{ \sqrt{ \epsilon}}
\sum_{k=1}^{m_2}\left( \sum_{j=1}^N \theta_k^j(\tilde x_t^\epsilon g_t^\epsilon) A_j^*(g_t^\epsilon)\right)\circ dW_t^k+
\f 1 {\epsilon} \sum_{j=1}^N \theta_0^j(\tilde x_t^\epsilon g_t^\epsilon) A_j^*(g_t^\epsilon)\,dt.
\end{equation}
 This completes the proof.
  \end{proof}
  If $\theta_k^j$ are lifts of functions from $M$, i.e. equi-variant functions, then the system of SDEs for $g_t^\epsilon$ do not depend on the slow variables.
 Define
  $$ \L_u f(g)={1\over 2}\sum_{k=1}^{m_1} \left( \theta_k^j(ug) A_j^*(g)\right)^2f(g)
  +\sum_{j=1}^N \theta_0^j(ug) A_j^*(g)f(g).$$
The matrix with entries 
$\Theta_{i,j} =\sum_{k=1}^{m_1}\theta_k^j \theta_k^i$ measures the ellipticity of the system.

In section \ref{averaging2} we state an averaging principle for this system of slow-fast equations.
 
 \subsection{Completely integrable stochastic Hamiltonian systems}
 
In \cite{Li-averaging}  a completely integrable Hamiltonian system (CISHS) in an $2n$ dimensional symplectic manifolds  is introduced, which has $n$  
Poisson commuting Hamiltonian functions. After some preparation this reduces to a slow-fast system in the action angle components. 

We begin comparing this model with the very  well studied random perturbation problem $dx_t=(\nabla H)^\perp(x_t)dt+\epsilon dB_t$ where $B$ is a real valued Brownian motion, $ H:\R^2\to \R$, and $(\nabla H)^\perp$ is the skew gradient of $H$. In the more recent CISHS model,   the energy function is assumed to be random and of the form $ \dot B_t$ so we have the equation $dx_t=(\nabla H)^\perp(x_t)\circ  dB_t$.  In both cases $H(x_t)=H(x_0)$ for all time, so $H$ is a conserved quantity for the stochastic system. Suppose that the CISHS system is perturbed by a small vector field, we have the family of equations
$$ dx_t^\epsilon =(\nabla H)^\perp(x_t^\epsilon)\circ  dB_t+\epsilon V(x_t)dt.$$
Given a perturbation transversal to the energy
surface of the Hamiltonians, one can show  that the energies converge on $[1, \f 1 \epsilon]$ to the solution of a system of ODEs. 
If moreover the perturbation is Hamiltonian, the limit is a constant and one may rescale time and find an effective Markov process on the scale $1/\epsilon^2$.
The averaging theorem was obtained from studying a reduced system of slow and fast variables.
 The CISHS  reduces to a system of  equations in $(H, \theta)$, the action angle coordinates,  where $H\in \R^n$ is the slow variable and $\theta \in S^n$ is the fast variables.
$$\begin{aligned}\f d {dt} H_t^i&=\epsilon f(H_t^\epsilon, \theta_t^\epsilon),\\
d\theta_t^i &=\sum_{i=1}^n X_i(H_t^\epsilon, \theta_t^\epsilon) \circ dW_t^i+\epsilon X_0(H_t^\epsilon, \theta_t^\epsilon)dt.
\end{aligned}$$
This slow-fast system falls, essentially,  into the scope of the article.

\section{ Ergodic theorem for  Fredholm operators depending on a parameter}
\label{Birkhoff}

Birkhoff's theorem for a sample continuous Markov process is directly associated to the solvability of the elliptic differential equation $\L u=v$ where $\L$ is the diffusion operator (i.e. the Markov generator ) of the Markov process and $v$ is a given function. A function $v$ for which $\L u= v$ is solvable should satisfy a number of independent constraints.
The index of the operator $\L$ is  the dimension of the solutions for the homogeneous problem minus the dimension of the independent constraints. 

\begin{defn}
A linear operator $T: E\to F$, where $E$ and $F$ are Hilbert spaces, is said to be a Fredholm operator
if both the dimensions of the kernel of $T$ and the dimension of its cokernel $F/\range(T)$
 are finite dimensional.
The Fredholm index of a Fredholm operator $T$  is defined to be
$$\index (T)=\dim (\ker (T))-\dim ({\mathrm {cokernel}}(T)).$$
\end{defn}
A Fredholm operator $T$ has also closed range and $E_2/\range(T)=\ker(T^*)$. 

A smooth elliptic diffusion operator on a compact space is Fredholm. It also  has a unique invariant probability measure. The Poisson equation $\L u=v$ is solvable for a function $v\in L^2$ if and only if 
$v$ has  null average with respect to the invariant measure, the latter  is the centre condition used in diffusion creations.

If we have a family of operators $\{\L_x:  x\in N\}$ satisfying H\"ormander's condition where $x$ is a parameter taking values in a manifold $N$, the parameter space is typically the state space for the slow variable, we  will need a continuity theorem on the projection operator $f\mapsto \bar f$.
We give a theorem on this in case each $\L_x$ has a unique invariant probability measure. It is clear that for each bounded measurable function~$f$, $\int f(z) d\mu_x(z)$ is a function of $x$. We study its  smooth dependence on~$x$.

   For the remaining of the section, for $i=0, 1, \dots, m$,  let   $Y_i: N\times G\to TG$ be smooth vector fields and  let $\L_x=\f 12\sum_{i=1}^m Y_i^2(x, \cdot)+Y_0(x, \cdot)$.

 \begin{defn}
 If  $\L_x$ satisfies H\"ormander's condition,
 let $r(x,y)$ denote the minimum number  for the vector fields and their iterated Lie brackets up to order $r(x,y)$ to span $T_yG$. Let $r(x)=\inf_{y\in G} r(x,y)$. If $G$ is compact, $r(x)$  is a finite number and will be called the rank of $\L_x$. \end{defn}
  
Let $s\ge 0$, let $dx$ denote the volume measure of a Riemannian manifold $G$ and let $\Delta$ denote the Laplacian. 
 If $f$ is a $C^\infty$ function we define its Sobolev norm to be
 $$\|f\|_{s}= \left( \int_M f(x)(I+\Delta)^{s/2}f(x) \,dx\right)^{\f 12}$$
 and we let $H_s$ denote the closure of $C^\infty$ functions in this norm. This can also be defined without using a Riemannian structure.
 If  $\{\lambda_i\}$ is  a partition of unity  subordinated to a system of coordinates $\{\phi_i, u_i\}$, then the above Sobolev norm
 is equivalent to the norm $\sum_i \| (\lambda_i f)\circ \phi_i\|_{s}$. For a compact manifold, the Sobolev spaces are independent of the choice of the Riemannian metric. 
Let us denote by $|T|$ the operator norm of a linear map $T$. 

Suppose that $\L_x$ satisfies H\"ormander's condition. Let  us re-name the vector fields 
$Y_i$ and  their  iterated Lie brackets  up to order $r(x)$
as $\{Z_k\}$.
Let us define the quadratic form $$Q^x(y)(df,df)=\sum_{i} |df(Z_i(x,y))|^2.$$
Then $Q^x(y)$ measures the sub-ellipticity of the operator. Let 
$$\gamma(x)=\inf_{|\xi|=1} Q^x(y)(\xi, \xi).$$ Then $\gamma(x)$ is locally bounded from below by a positive number.

We summarise the properties of H\"ormander type operators in the proposition below. Let $\L_x^*$ denote the $L_2$ adjoint of $\L_x$. An invariant probability measure  for $\L_x$ is a probability measure such that $\int_G \L_x f (y)\mu_x(dy) =0$ for any $f$ in the domain of the generator. 

 \begin{prop}\label{uniform-sub}
 Suppose that  each $\L_x$  satisfies H\"ormander's condition and that $G$ is compact. 
 Then the following statements hold.
\begin{enumerate}
\item[(1)]  There exists a positive number $\delta(x)$ such that  for every $s\in \R$ there exists a constant $C(x)$  such that for all $u\in C^\infty(G; \R)$
the following sub-elliptic estimates hold,
   $$\|u\|_{s+\delta}  \le C( \|\L_x u\|_s+|u|_{L_2} ), \quad \|u\|_{s+\delta}  \le C( \|\L_x^* u\|_s+|u|_{L_2} ).$$
   We may and will choose $C(x)$ to be  continuous and $\delta(x)$ to be locally bounded from below.
   If $r$ is bounded there exists $\delta_0>0$ such that $\delta(x)\ge \delta_0$.
   \item [(2)]  $\L_x$ and $\L_x^*$ are hypo-elliptic.
   \item [(3)] $\L_x$ and $\L_x^*$ are Fredholm and index=0.
   \item[(4)] If the dimension of $\ker(\L_x)$ is $1$, then $\ker(\L_x)$ consists of constants.
\end{enumerate}
 \end{prop}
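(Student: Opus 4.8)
The plan is to regard (1) as the substantive statement and to read (2)--(4) off from it. Fix $x$. Since $G$ is compact and $\L_x=\f12\sum_{i=1}^m Y_i^2(x,\cdot)+Y_0(x,\cdot)$ is a H\"ormander sum-of-squares operator with drift, the classical sub-elliptic a priori estimates (H\"ormander, Kohn, Rothschild--Stein) provide a number $\delta(x)>0$, independent of $s$, and for each $s$ a constant with $\|u\|_{s+\delta(x)}\le C(\|\L_x u\|_s+|u|_{L_2})$ for all $u\in C^\infty(G)$; one may take $\delta(x)=c\,r(x)^{-1}$ (or $2^{-r(x)}$, according to the argument used), with $r(x)$ the rank of $\L_x$. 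I would quote this and then track the dependence of the constant on $x$.

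To upgrade to (1) I would go through one of the standard proofs --- the Rothschild--Stein lifting/approximation scheme, or Kohn's localized estimates --- and record that $C$ is built only out of (i) upper bounds for finitely many $C^k$-norms, $k=k(r,s)$, of $Y_1,\dots,Y_m$ and of their iterated brackets up to length $r(x)$, and (ii) a positive lower bound for $\gamma(x)=\inf_{y\in G,\,|\xi|=1}Q^x(y)(\xi,\xi)$. Since $(x,y)\mapsto Y_i(x,y)$ is jointly smooth and $G$ is compact, the quantities in (i) are finite and locally bounded in $x$; since $Q^x(y)(\xi,\xi)$ is continuous in $(x,y,\xi)$ and, by H\"ormander's condition, strictly positive at each point, its infimum $\gamma(x)$ over the compact set $G\times\{|\xi|=1\}$ is continuous and strictly positive, hence locally bounded below. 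Feeding these continuous data into the explicit bound for $C$ lets one choose $C(x)$ continuous, while $\delta(x)$ is controlled by the rank and is $\ge\delta_0>0$ once $r$ is bounded. For the $L_2(G,dx)$-adjoint one computes $\L_x^*=\f12\sum Y_i^2(x,\cdot)+\hat Y_0(x,\cdot)+c(x,\cdot)$ with $\hat Y_0=-Y_0+\sum_i a_i Y_i$, $a_i$ smooth; the zeroth-order term is immaterial for H\"ormander's condition, and because $\hat Y_0$ differs from $-Y_0$ by a combination of the $Y_i$, the Lie algebra generated pointwise by $\{Y_i,\hat Y_0\}$ coincides with that generated by $\{Y_i,Y_0\}$, so $\L_x^*$ satisfies H\"ormander's condition with the same rank and the same bookkeeping gives the companion estimate for $\L_x^*$. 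This proves (1), and (2) follows, either from these estimates or directly from H\"ormander's theorem, giving hypo-ellipticity of $\L_x$ and $\L_x^*$.

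For (3), fix $s$ and view $\L_x:H_{s+\delta}\to H_s$. The estimate $\|u\|_{s+\delta}\le C(\|\L_x u\|_s+|u|_{L_2})$ together with the compact embedding $H_{s+\delta}\hookrightarrow L_2$ (Rellich, $G$ compact) forces $\ker\L_x$ finite-dimensional and $\range\L_x$ closed. The annihilator of $\range\L_x$ is the distributional kernel of $\L_x^*$, which by hypo-ellipticity consists of smooth functions and is finite-dimensional by the estimate for $\L_x^*$; hence $\L_x$ is Fredholm with $\operatorname{cokernel}(\L_x)\cong\ker(\L_x^*)$, and symmetrically for $\L_x^*$. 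For the index, integrating $\f12\sum Y_i^2$ and $Y_0$ by parts against $u$ yields a dissipativity bound $\langle\L_x u,u\rangle_{L_2}\le C_0|u|_{L_2}^2$, and likewise for $\L_x^*$; so for $\Lambda>C_0$ both $\L_x-\Lambda I$ and $\L_x^*-\Lambda I$ are injective, whence $\L_x-\Lambda I$ --- a compact perturbation of the Fredholm operator $\L_x$, hence itself Fredholm --- is also surjective, i.e.\ bijective, of index $0$. As $\Lambda I:H_{s+\delta}\to H_s$ is compact, $\index\L_x=\index(\L_x-\Lambda I)=0$, and $\index\L_x^*=\dim\ker\L_x^*-\dim\ker\L_x=-\index\L_x=0$.

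Statement (4) is then immediate from $\L_x 1=\f12\sum_i Y_i^2(1)+Y_0(1)=0$: the constant functions always lie in $\ker\L_x$, so a one-dimensional $\ker\L_x$ must be exactly the line of constants. The only genuinely non-routine point in this programme is the uniformity claimed in the first two paragraphs --- continuity of $C(x)$ and a lower bound for $\delta(x)$; this forces one to open up the proof of H\"ormander's estimate and check that its non-canonical ingredients (the local lifting, the choice of dilations, the partition of unity) can be organized so that only the continuous data $\gamma(x)$ and finitely many coefficient seminorms enter the final constant. The rest is a repackaging of standard sub-elliptic and Fredholm theory.
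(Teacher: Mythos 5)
Your argument follows essentially the same route as the paper's: H\"ormander's sub-elliptic estimate for both $\L_x$ and $\L_x^*$, with the adjoint seen to satisfy the bracket condition because it differs from $\L_x$ only by a sign on the drift, terms in the span of the $Y_i$, and a zeroth-order term; the constants tracked through finitely many coefficient seminorms and the subellipticity constant $\gamma(x)$; and then hypoellipticity, finite-dimensional kernel and closed range, and index zero all read off from the estimates. Where the paper merely gestures at Yosida for the index-zero step, you supply the actual dissipativity/resolvent argument ($\L_x-\Lambda I$ and $\L_x^*-\Lambda I$ injective for large $\Lambda$, hence $\L_x-\Lambda I$ bijective, and $\Lambda I$ compact from $H_{s+\delta}$ to $H_s$), which is precisely the argument the paper has in mind.
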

 
 \begin{proof}
  It is clear that H\"ormander's condition still holds if we change the sign of the drift $Y_0$, or add a 
 zero order term, or add a first order term which can be written as a linear combination of 
 $\{Y_0, Y_1, \dots, Y_m\}$.  Since
 $$\L_x^*=\f 12 \sum_{i=1}^m (Y_i)^2-Y_0-\sum_i \div(Y_i)Y_i +\div(Y_0)-\f 12 \sum_i L_{Y_i} \div(Y_i)  +\f 12\sum_i [\div (Y_i)]^2, $$
 $\L_x$ satisfies also H\"ormander's condition.

By a theorem of H\"ormander in \cite{Hormander-acta},   there exists a positive number $\delta(x)$, such that for every $s\in \R$ and all $u\in C^\infty(G; \R)$,
   $$\|u\|_{s+\delta(x)}  \le C(x)( \|\L_x u\|_s+|u|_{L_2} ).$$ 
  The constant $C(x)$ may depend on $s$, the $L_\infty$ bounds on the vector fields and their derivatives, and on the rank $r(x)$, and the sub-ellipticity constant $\gamma (x)$.
  The constant $\delta(x)$ in the sub-elliptic estimates depend only on how many number of brackets are needed for obtaining a basis of the tangent spaces, we can for example take $\delta(x)$ to be  $\f 1 {r(x)}$. The number of brackets needed  to obtain a basis at $T_yG$  is upper semi-continuous in $y$ and is bounded for a compact manifold. 
Since  $\L_x$  varies smoothly in $x$, then for $x \in D$ there is a uniform upper bound on the number of brackets needed. Also as indicated in  H\"ormander's proof  \cite{Hormander-acta}, the constant $C(x)$ depends smoothly on the vector fields.
 If there exists a number $k_0$ such that $r(x)\le k_0$ for all $x$, then we can choose a positive $\delta$ that is independent of $x$. This  proves the estimates in part (1) for both $\L_x$ and $\L_x^*$.
 The hypo-ellipticity of $\L_x$ and $\L_x^*$ is the celebrated theorem of H\"ormander and follows from his sub-elliptic estimates, this is part (2). 
  
For part (3) we only need to work with $\L_x$. We sketch a proof for  $\L_x$ to be Fredholm as a bounded operator from  its domain with the graph norm  to $L_2$.  From the sub-elliptic estimates it is easy to see that $\L_x$ has compact resolvents and that $\ker(\L_x)$ and $\ker(\L_x^*)$ are finite dimensional. Then a standard argument shows that $\L_x$ has closed range: If $\L_x f_n$ converges in $L_2$, then either the sequence $\{f_n\}$ is bounded in which case they are also bounded in $H_{\delta}$ the latter is compactly embedded in $L_2$,  and therefore has a convergent sub-sequence. Let us denote  $g$ a limit point. Then since $\L_x$ if closed, $g$ satisfies that $\L g=\lim_{n\to \infty} \L_x f_n$. If $\{f_n\}$ is not $L_2$ bounded, we can find another sequence $\{g_n\}$ in the kernel of $\L$
such that $f_n-g_n$ is bounded to which the previous argument produces a convergent sub-sequence. The dimension of the cokernel is the dimension of the kernel of $\L_x^*$,  proving the Fredholm property. That it has zero index is another consequence of the  sub-elliptic estimates and can be proved from the definition and is an elementary (using  properties of the eigenvalues of the resolvents and their duals), see \cite{Yosida}.
Part (4) is clear as constants are always in the kernel of $\L_x$.
  \hfill$\square$
\end{proof}

If $\mu_1$ and $\mu_2$ are two probability measures on a metric space $M$ we denote by 
$|\mu -\nu|_{TV}=\sup_{A\in \F} |\mu(A)-\nu(A)|$ their total variation norm and $W_1$ their Wasserstein distance:
$$W_1(\mu_1, \mu_2) =\inf_{\nu} \int_{M\times M}  \rho(x,y) \nu(x,y)$$
where $\rho$ is the distance function and the infimum is taken over all couplings of $\mu_1$ and 
$\mu_2$. Suppose that  $\L_x$ has an invariant probability measure $\mu_x(dy)=q(x,y)dy$. If  for a constant $K$, 
$|q(x_1, y)-q(x_2, y)|\le K \rho(x_1, x_2)$ for all $x_1\in M, x_2\in M, y\in G$, 
then 
$|\mu^{x_1}-\mu^{x_2}|_{TV}\le K \rho(x_1,x_2)$.

Let $\mu_x$ be an invariant probability measure for $\L_x$.
We study the regularity of the densities of the invariant probability measures with respect to the parameter, especially the continuity of the  invariant probability measures in the total variation norm. 
This can be more easily obtained if $\L_x$  are  Fredholm operators on the same Hilbert space and if there is a uniform estimate on the resolvent. For a family of uniformly strict elliptic operators, these are possible.

 \begin{rem}
 For the existence of an invariant probability measure, we may use  Krylov-Bogoliubov theorem which is valid  for a  Feller semi-group: Let $P_t(x, \cdot)$ be the transition probabilities. If  for some probability measure $\mu_0$ and for a sequence of  numbers $T_n$ with $T_n\to \infty$, 
 $\{Q_n(\cdot)=\f {1} {T_n} \int_0^{T_n} \int_M P_t (x, \cdot)  d\mu_0(x) dt, n\ge1\}$ is tight,
   then any limit point is an invariant probability measure.  The existence of an invariant probability measure is trivial for a Feller Markov process on a compact space.  Otherwise, a Laypunov function is another useful tool. See \cite{Eckmann-Hairer, Hairer-Mattingly, Hairer-Pillai,Hairer-Mattingly-Scheutzow} for  relevant existence and uniqueness theorems.
 \end{rem}

\begin{rem}
Our  operators $\L_x$ are Fredholm from their domains to $L_2$.
On a compact manifold $\L_x$ is a bounded operator from $W^{2,2}$ to $L_2$ but this is only an extension of  $\L_x$, where $W^{2,2}$ denotes the standard Sobolev space of functions, twice weakly differentiable with derivatives in $L_2$.  We have 
$W^{2,2}\subset \Dom(\L_x)\subset W^{\delta(x), 2}$.
Due to the directions of degeneracies the domain of $\L_x$, given by its graph norm, can be larger than $W^{2,2}$. Since the points of the degeneracies of $\L_x$ move, in general, with $x$,  their domain also change with $x$. 
Suppose that $\L_x$ has zero Fredholm index, then $\L_x$ is an isometry from 
$[\ker(\L_x)]^\perp$ to its image and $\L_x^*$ is invertible on $N^\perp$, the annihilator of the kernel of $\L_x$. Set
$$A(x)=\left|(\L_x^*)^{-1}_{N_x^\perp}\right|_{op}.$$
\end{rem}

In the following proposition  we consider the continuity of $\mu^x$. 

 \begin{prop} \label{continuity-invariant-measure}
Let $G$ be compact. Suppose that $Y_i\in BC^\infty$ and  the conclusions of Proposition \ref{uniform-sub}.  Suppose also that each $\L_x$  has a unique invariant probability measure $\mu^x(dy)$.
 \begin{enumerate}
 \item [(i)] Let $q(x,y)$ denote the kernel of $\mu^x(dy)$. Then $q$  and its derivatives in $y$
are locally  bounded in $x$.\\  If the rank $r$ is bounded from above,  $\gamma$ is bounded from below, then $q$ and its derivatives in $y$ are bounded, i.e. $\sup_x|\nabla^{(k)} \rho(x, \cdot)|_\infty$ is finite for any $k\in \N$.
\item[(ii)] 
The kernel $q$ is smooth in both variables. 
\item [(iii)]
Let $D$ be a compact subset of $N$. There exists a number $c$ such that 
for any $x_1, x_2\in D$, $|\mu^{x_1}-\mu^{x_2}|_{TV}\le c \rho(x_1,x_2)$.
\item [(iv)] Suppose furthermore that $r$ is bounded from above,  $\gamma$ is bounded from below, and $A$ is bounded, then $\mu^x$ is globally Lipschitz continuous in $x$ and $q\in BC^\infty(N\times G)$.
 \end{enumerate}
     \end{prop}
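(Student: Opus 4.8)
The plan is to exploit the H\"ormander sub-elliptic estimates from Proposition \ref{uniform-sub} together with the defining relation $\L_x^* q(x,\cdot)=0$ for the invariant density, and to differentiate this relation in the parameter $x$. First I would establish part (i): fix $x$; since $q(x,\cdot)$ is an $L^2$ function annihilated by $\L_x^*$, bootstrap with the estimate $\|u\|_{s+\delta}\le C(x)(\|\L_x^*u\|_s+|u|_{L_2})$ repeatedly (applied to $u=q(x,\cdot)$, using $\|\L_x^* q\|_s=0$) to conclude $q(x,\cdot)\in H_s$ for all $s$, hence $q(x,\cdot)\in C^\infty(G)$ with all Sobolev norms controlled by $C(x)^k|q(x,\cdot)|_{L_2}$. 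Since $|q(x,\cdot)|_{L_1}=1$ and $q\ge 0$, a Sobolev/elliptic-Harnack type bound gives $|q(x,\cdot)|_{L_2}$ locally bounded in $x$, and by Sobolev embedding so are $|\nabla^{(k)}q(x,\cdot)|_\infty$. When $r$ is bounded above and $\gamma$ bounded below, Proposition \ref{uniform-sub} furnishes a $\delta_0>0$ and a locally-uniform $C(x)$, so the bounds become global in $x$.

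Next, part (ii): I would show $q$ is jointly smooth by a parameter-dependent regularity argument. The family $x\mapsto \L_x^*$ depends smoothly on $x$ (the vector fields $Y_i$ are $BC^\infty$ in $(x,y)$), and $\ker(\L_x^*)$ is one-dimensional with the normalization $\int_G q(x,y)\,dy=1$. Because $\L_x$ has Fredholm index zero and a one-dimensional kernel, $\L_x^*$ restricted to the annihilator $N_x^\perp$ of $\ker(\L_x)$ is invertible; writing $\Pi_x$ for the spectral projection onto $\ker(\L_x^*)$ (a Riesz projection, hence smooth in $x$ wherever the Fredholm data is smooth, cf. the discussion in the Problem and \cite{Atiyah}), one gets $q(x,\cdot)=\Pi_x q(x,\cdot)$ normalized, so $x\mapsto q(x,\cdot)\in L_2$ is smooth; then elliptic regularity in $y$, uniform in $x$ on compacts via part (i), upgrades this to joint smoothness. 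Alternatively, differentiate $\L_x^* q(x,\cdot)=0$ formally in $x$: $\L_x^*(\partial_x q)=-(\partial_x\L_x^*)q(x,\cdot)$, whose right side lies in the image of $\L_x^*$ (orthogonality to $\ker(\L_x)=$ constants holds because $\int_G \partial_x\L_x^* q\,dy=\partial_x\int_G \L_x^* q\,dy=0$ and one checks $\int \L_x f\,d\mu_x=0$ forces the right compatibility), so $\partial_x q$ is recovered by the bounded inverse on $N_x^\perp$, giving the derivative and, inductively, all higher $x$-derivatives, each lying in every $H_s$.

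For part (iii), on a compact $D\subset N$ the above gives $|q(x_1,\cdot)-q(x_2,\cdot)|_{L_1}\le \sup_{D}|\partial_x q|_{L_1}\,\rho(x_1,x_2)$, and $|\mu^{x_1}-\mu^{x_2}|_{TV}=\tfrac12|q(x_1,\cdot)-q(x_2,\cdot)|_{L_1}$ (or the $\sup_A$ definition directly), which is the claimed local Lipschitz bound with $c=c(D)$. Part (iv) follows by the same computation once the bounds of part (i) and the inverse $(\L_x^*)^{-1}_{N_x^\perp}$ (whose operator norm is $A(x)$, assumed bounded) are global: the identity $\partial_x q(x,\cdot)=(\L_x^*)^{-1}_{N_x^\perp}\big(-(\partial_x\L_x^*)q(x,\cdot)\big)+(\text{correction keeping }\textstyle\int q=1)$ then has $L_2$-norm bounded uniformly in $x$ by $A(x)$ times a bound on $\partial_x\L_x^*$ acting on the (globally bounded) $q(x,\cdot)$, and bootstrapping with the uniform sub-elliptic estimate ($\delta\ge\delta_0$, $C$ uniform) propagates uniform bounds to all $H_s$-norms, yielding $q\in BC^\infty(N\times G)$ and global Lipschitz continuity of $x\mapsto\mu^x$.

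The main obstacle I anticipate is the smooth (or even continuous) dependence of the projection $\Pi_x$ onto $\ker(\L_x^*)$ and of the partial inverse $(\L_x^*)^{-1}_{N_x^\perp}$ on $x$: the domains of $\L_x$ move with $x$ (as the Remark before this proposition stresses, $\Dom(\L_x)$ lies between $W^{2,2}$ and $W^{\delta(x),2}$ and the degeneracy directions shift), so one cannot naively treat $\L_x^*$ as a fixed-domain analytic family. The clean way around this is to factor through a fixed large Sobolev space: view $\L_x^*:H_{2}\to H_{-\delta_0}$ (or into $L_2$ from $\Dom$) as a norm-continuous, in fact smooth, family of bounded Fredholm operators of index zero with one-dimensional kernel, invoke the standard fact that the Riesz projections and partial inverses of such a family vary smoothly (\cite{Atiyah, Yosida}), and only afterwards use the $x$-uniform sub-elliptic estimates to transfer regularity back to the strong topology in $y$. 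Making this factorization precise — in particular checking that the kernel stays one-dimensional and that the normalization $\int_G q\,dy=1$ is compatible with the Fredholm setup — is the part that needs care; everything else is bootstrapping with the estimates already in hand.
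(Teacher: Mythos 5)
Your plan is essentially the paper's proof, and the core moves all match: bootstrap $q(x,\cdot)$ in the $y$-variable with the H\"ormander sub-elliptic estimates, differentiate $\L_x^*q=0$ in $x$, solve $\L_x^*\partial_x q=-(\partial_x\L_x^*)q$ via the Fredholm alternative on the annihilator of the constants, control $\partial_x q$ by $A(x)$, and pass to total variation by duality. Two places where the paper takes a cleaner or slightly different route are worth noting. First, for the base case of the bootstrap in (i): you propose to get $|q(x,\cdot)|_{L_2}$ locally bounded from $|q|_{L^1}=1$, $q\ge 0$ by a ``Sobolev/elliptic-Harnack type bound,'' but that step is not free and would itself require hypoelliptic Harnack machinery. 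The paper avoids it by starting the bootstrap at negative regularity: for any $s<-\dim(G)/2$ one has $\|q(x,\cdot)\|_{s}\le C\,|q(x,\cdot)|_{L^1}=C$ (the dual of the Sobolev embedding $H_{-s}\hookrightarrow L^\infty$), and the sub-elliptic estimate then climbs from $H_s$ upward one $\delta$ at a time. This is elementary and gives the locally-in-$x$ bound without any Harnack input. Second, for (ii) you offer two routes; the paper uses your second (differentiating the equation and inverting $\L_x^*$ on $N_x^\perp$), precisely because the first (analytic Riesz projections) runs into the moving-domain issue you correctly flag. Your suggested fix — view $\L_x^*$ as a norm-continuous family of index-zero Fredholm operators on a fixed Sobolev pair and transfer regularity back afterwards — is reasonable, but it is a detour; once you observe that $(\partial_x\L_x^*)q$ has zero integral and hence lies in the range of $\L_x^*$, the differentiation argument goes through directly with only the sub-elliptic estimate and the uniqueness of $\partial_x q$ as a distributional derivative, which is what the paper does. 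Everything else (part (iii) via $L^1$-Lipschitz of $q$ on compacts, part (iv) by making $\delta$, $C$, and $A$ uniform) matches.
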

 \begin{proof}
 Each function $q$ solves the equation $\L_x^*q=0$ where $\L_x^*$ is the $L^2$ adjoint of $\L_x$. 
Since $\L_x^*$ is hypo-elliptic, then for each $x$, $q(x, \cdot)$ is $C^\infty$. In other words,  $q(x,\cdot)$  is a function from $M$ to $C^\infty (G, \R)$.  
We observe that $q(x,\cdot)$ are probability densities, so bounded in $L^1$.  If we take $s$ to be a number smaller than $-n/2$, $n$ being the dimension of the manifold, then
$|q(x, \cdot)|_s \le  C|q(x, \cdot)|_{L^1(G)}$.
We apply the sub-elliptic estimates in part (1) of Proposition \ref{uniform-sub} to $q$:
$$\|u\|_{{s+\delta(x)}} \le c_0(x)(\|\L_x^* u\|_{s}+\|u\|_{s}),$$
 where $\delta(x)$ and $c(x)$ are constants, and obtain that $|q(x, \cdot)|_{s+\delta(x)}\le C(x)$.
 Iterating this we see that for all $s$,
 $$|q(x, \cdot)|_s\le C(\delta(x), r(x), \gamma(x), Y).$$
 The function $C(x)$ depends on  the $L^\infty$ norms of the vector fields $Y_i$ and their covariant derivatives, and also on $\gamma(x)$.
 Also,  $\delta$ can be taken to be  $\f 1 {r(x)+1}$ and $r(x)$ is locally bounded.
By the Sobolev embedding  theorems, $q$  and the norms of  its derivatives in $y$
are locally  bounded in~$x$.
 (If furthermore $r$ and $\gamma$  are bounded, $Y_i$ and their derivatives in $x$  are bounded, then both $\delta$ and $C$ can be taken as a constant, in which case $q$ and their derivatives in $y$ are bounded.)

Since $q$ is in $L^1$, its distributional derivative in the $x$-variable exists and will be denoted by $\partial_x q$.
For each $x$, $\L_x^*q=0$, and so the distributional derivative in $x$ of $\L_x^*q$ vanishes and
$$\partial _x( \L_x^*  )  q(x,y)+\L_x^* \partial_x q(x,y)=0.$$
Set
 $$g(x,y)=-\left(\partial_x(\L_x^*) \right)q(x,y).$$
 Then $g$ is smooth in $y$, whose Sobolev norms in $y$ are locally bounded in $x$.
Since the distributional derivative of $q$ in $x$ satisfies  $\int_G \L_x^*(\partial_x q)(x,y)\,dy=0$
for every~$x$,
 $\int_G g(x,y) \;dy$ vanishes also. Since the index of $\L_x$ is zero, the invariant measure is unique, the dimension of the kernel of $\L_x$ is $1$. The kernel
consists of only constants and so  $g(x,\cdot)$ is an annihilator of  the kernel of $\L$. 
By Fredholm's alternative, this time applied to $\L_x^*$, we see that for  each $x$ we can solve the Poisson equation
$$\L_x^* G(x,y)=g(x,y).$$
Furthermore, by the sub-elliptic estimates, $|G(x,\cdot)|_{L_2(G)}\le A(x) |g(x,\cdot)|_{L_2(G)} $ for some number $A(x)$. Since $A$ is locally bounded, then $G(x,y)$ has distributional derivative in
$x$. But $\partial_x q(x,y)$ also solves $\L_x^*\partial_x q(x,y)=g(x,y)$, by the uniqueness of solutions we see that $\partial_x q(x,y)=G(x,y)$. Thus the distributional derivative of $q$ in $x$ is a locally  integrable function.
 Iterating this procedure and use sub-elliptic estimates to pass to the supremum norm we see that $q(x,y)$ is $C^\infty$ in $x$ with its derivatives in $x$ locally bounded, in particular for a locally bounded function $c_1$, 
 $$\sup_{y\in G} |\partial_x q(x,y)|\le  A(x) c_1(x).$$
  Finally, let $f$ be a measurable function with $|f|\le 1$.  Then
 $$\left| \int_G f(y) q(x_1,y) dy-\int_G f(y) q(x_2,y) dy\right|\le \sup_{x\in D} A(x) \sup_{x\in D} c_1(x)\;\rho(x_1, x_2), $$
 where $D$ is a relatively compact open set containing a geodesic passing through $x_1$ and~$x_2$.
 We use the fact that the total variation norm between two probability measures $\mu$ and $\nu$ is $\f 12 \sup_{|g|\le 1}\left |\int g\,d\mu -\int g\; d\nu\right|$ where the supremum is taken over the family of measurable functions with values in $[-1, 1]$  to conclude  that $|\mu^{x_1}-\mu^{x_2}|_{TV} \le \sup_{x\in D} A(x)  \;\rho(x_1, x_2)$ and conclude the proof. 
  \hfill$\square$
\end{proof}
\begin{exa}
An example of a fast diffusion satisfying all the conditions of the proposition is  the following on $S^1$
and take $x\in \R$:
$$dy_t=\sin(y_t+x) dB_t+\cos(y_t+x) dt.$$ 
Then $\L_x= \cos(x+y) \f{\partial } {\partial y} +\f 12 \sin^2 (x+y) \f{\partial^2 } {\partial y^2}$
satisfies H\"ormander's condition, has a unique invariant probability measure and $r(x)=1$.
Furthermore the resolvent of $\L_x$ is  bounded in $x$. 
\end{exa}

  \begin{defn}
  The operator $\L_x$ is said to satisfy the parabolic H\"ormander's condition if 
$\{Y_1(x,\cdot), \dots, Y_{m_2}(x, \cdot)\}$ together with the brackets and iterated the brackets of 
$\{Y_0(x, \cdot), Y_1(x,\cdot), \dots, Y_{m_2}(x, \cdot)\}$ spans the tangent space of $N$ at every point.  Let $P^x(t, y_0, y) $ denote the semigroup generated by $\L$. 
  \end{defn}

\begin{remark}
Suppose that each $\L_x$ is  symmetric, satisfies the parabolic H\"ormander's condition and the following
uniform Doeblin's condition:  
there exists a constant $c\in (0,1]$,  $t_0>0$, and a probability measure $\nu$ such that 
$$P_t^x(y_0, U)\ge c\nu(U),$$
for all $x\in N$, $y_0\in G$ and for every  Borel set $U$ of $G$,
Suppose that $Y_j\in BC^\infty$.  Then $A(x)$ is bounded. 
In fact for any $f$ with $\int f(y) \mu^x(dy)=0$, the function $P_t^xf(y_0)=\int_G f(y) P^x(t,y_0, dy)$ converges to $0$ as $t \to \infty$ with a uniform exponential rate. Since $\L_x$ satisfies the parabolic H\"ormander's condition, $\L-\f {\partial}{\partial x}$ satisfies H\"ormander's condition on $M\times \R$. Then by the sub-elliptic estimates for $\L-\f {\partial} {\partial t}$,
$P_t^xf$ converges also in $L_2$. Let $R^x$ denote the resolvent of $\L_x$. Since $$\<R^x f, f\>_{L_2}=\int_G \int_0^\infty P_t^xf (y)f(y)\, dt\, dy,$$
then $R^x$ is uniformly bounded. Since $\L_x$ is symmetric, this gives a bound on $A(x)$.
We refer to the book \cite{Bakry-Gentil-Ledoux} for studies on Poincar\'e inequalities for Markov semi-groups. 
\end{remark}

\begin{cor} Let $G$ be compact.
Then $q$ is  smooth in both variables and in $BC^\infty(N\times G)$. In particular $\mu^x=q(x,y)dy$ is globally Lipschitz continuous.
\end{cor}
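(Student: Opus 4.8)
The plan is to obtain the Corollary as a direct application of part~(iv) of Proposition~\ref{continuity-invariant-measure}; all that is really needed is to check, in the present setting, its three standing hypotheses: that the rank $r$ is bounded from above, that the sub-ellipticity constant $\gamma$ is bounded from below, and that the resolvent quantity $A$ is bounded. Once these three are in hand, Proposition~\ref{continuity-invariant-measure}(iv) yields simultaneously $q\in BC^\infty(N\times G)$ and the global Lipschitz continuity of $x\mapsto\mu^x$; the latter can also be read off directly, since $q\in BC^\infty$ makes $\partial_x q$ uniformly bounded and $G$ has finite volume, whence
\[
|\mu^{x_1}-\mu^{x_2}|_{TV}=\f12\int_G|q(x_1,y)-q(x_2,y)|\,dy\le\f12\,\vol(G)\,\Big(\sup_{x,y}|\partial_x q(x,y)|\Big)\,\rho(x_1,x_2).
\]

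The first of the three conditions, the bound on $A$, is exactly the conclusion of the Remark immediately preceding the Corollary, whose hypotheses we take to be in force: each $\L_x$ is symmetric, satisfies the parabolic H\"ormander condition and the uniform Doeblin condition, and $Y_j\in BC^\infty$. Under these assumptions the Doeblin lower bound forces $P^x_t f\to 0$ at an exponential rate uniform in $x$ for every $\mu^x$-mean-zero $f$; the hypo-ellipticity of $\L_x-\f{\partial}{\partial t}$ on $G\times\R$, obtained from the sub-elliptic estimates of Proposition~\ref{uniform-sub}, upgrades this to $L_2$-convergence, so that $\<R^xf,f\>_{L_2}=\int_0^\infty\int_G P^x_tf(y)f(y)\,dy\,dt$ is bounded independently of $x$; since $\L_x$ is symmetric, this uniform resolvent bound is precisely a uniform bound on $A(x)=\left|(\L_x^*)^{-1}_{N_x^\perp}\right|_{op}$.

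It remains to control $r$ and $\gamma$. For each fixed $x$, compactness of $G$ together with H\"ormander's condition already gives $r(x)<\infty$ and $\gamma(x)>0$: the number of iterated Lie brackets needed to span $T_yG$ is upper semi-continuous in $y$ (as recorded in the proof of Proposition~\ref{uniform-sub}), and the quadratic form $Q^x(y)(\xi,\xi)$ is continuous and strictly positive as $(y,\xi)$ ranges over the unit cotangent sphere bundle of the compact manifold $G$. The remaining task is to make these two estimates uniform in the parameter $x$; since the $Y_i$ are jointly smooth in $(x,y)$ with $Y_i\in BC^\infty$, the relevant bracket fields $Z_k$ have $C^0$-norms bounded uniformly in $(x,y)$ and the bracketing order does not degenerate as $x$ varies, so $r$ is bounded above and $\gamma$ bounded below, and then Proposition~\ref{continuity-invariant-measure}(iv) applies verbatim. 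I expect this last uniformity in $x$ to be the main obstacle: for a non-compact parameter manifold $N$ it does not come for free and must be built into the hypotheses (for instance through a uniform bracket-generating condition for the family $\{\L_x\}$, or compactness of the parameter range), and it is exactly at this point that joint regularity of the coefficients in $(x,y)$, rather than regularity for each frozen $x$, is what is being used.
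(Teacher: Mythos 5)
Your proposal is correct and takes essentially the same route as the paper: read the Corollary under the hypotheses of the preceding Remark, deduce the uniform bound on $A(x)=|(\L_x^*)^{-1}_{N_x^\perp}|_{op}$ from the uniform Doeblin condition (uniform exponential convergence to equilibrium, hence a uniform spectral gap), and then invoke Proposition~\ref{continuity-invariant-measure}(iv). The paper's own one-line justification (``the semigroups $P_t^x$ converge to equilibrium with uniform rate; the spectral gap is bounded below'') is exactly your first step.

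Where your write-up adds value is in the explicit bookkeeping of the three standing hypotheses of Proposition~\ref{continuity-invariant-measure}(iv), and in flagging that the uniform upper bound on the rank $r(x)$ and uniform lower bound on $\gamma(x)$ are not addressed by the paper's one-liner. Two remarks there. First, your intermediate sentence ``the bracketing order does not degenerate as $x$ varies, so $r$ is bounded above and $\gamma$ bounded below'' is overstated: $Y_i\in BC^\infty$ gives uniform $C^0$-bounds on the iterated brackets $Z_k$, but neither uniform spanning order nor a uniform lower bound on $\inf_{|\xi|=1}Q^x(y)(\xi,\xi)$ follows from boundedness of the coefficients alone when $N$ is non-compact; these are genuinely additional structural assumptions (a uniform bracket-generating hypothesis, or compactness of $N$). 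Your closing sentence correctly walks this back, and that is the right stance; the tension between the two sentences should simply be resolved in favour of the latter. Second, the direct route you give to global Lipschitz continuity from $q\in BC^\infty$, via
\[
|\mu^{x_1}-\mu^{x_2}|_{TV}=\tfrac12\int_G|q(x_1,y)-q(x_2,y)|\,dy\le \tfrac12\,\vol(G)\,\sup_{x,y}|\partial_x q(x,y)|\,\rho(x_1,x_2),
\]
is fine (integrate $\partial_xq$ along a minimizing geodesic in $N$ and use compactness of $G$), and is a cleaner way to extract the ``in particular'' clause than re-invoking the Proposition.
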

 Just note that the semigroups $P_t^x$ converges  to equilibrium with uniform rate.  The spectral gap of $\L_x$  is bounded from below by a positive number.  

The following is a version of the law of large numbers.

\begin{thm}\label{lemma-lln}
Let $G$ be compact. Suppose that $\sum_{j=1}^{m_2}|Y_j|_\infty$ is finite, and the conclusions of Proposition \ref{uniform-sub}. Suppose that  each $\L_x$ 
  has a unique invariant probability measure $\mu_x$.    
 
 Let $s>1+\f{\dim(G)}{2}$. Then there exists a constant $C(x)$ such that for every $x\in N$
and for every smooth real valued function $f:N\times G \to \R$ with compact support in the first variable (or independent of the first variable), 
\begin{equation}
\label{llln-1}
\sqrt{\EE \left({1\over T} \int_t^{t+T} f(x, z_r^x) \;dr- \int_G f(x, y) \mu_x(dy)\right)^2} \le C(x)\|f(x, \cdot)\|_s{1\over \sqrt{T}}
\end{equation}
where  $z_r^x$ is  an $\L_x$ diffusion and $C(x)$ is locally bounded. 
\end{thm}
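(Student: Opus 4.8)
The plan is to reduce the law of large numbers to the solvability and regularity of a Poisson equation on the compact fibre $G$, and then read off the rate from It\^o's formula. Throughout $x\in N$ is a frozen parameter.

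\textbf{Reduction and the Poisson equation.} Set $\bar f(x)=\int_G f(x,y)\,\mu_x(dy)$ and $g(x,\cdot)=f(x,\cdot)-\bar f(x)$, so that $\int_G g(x,y)\,\mu_x(dy)=0$; since $G$ is compact, $\|g(x,\cdot)\|_s\le c_s\,\|f(x,\cdot)\|_s$ for a constant depending only on $G$ and $s$. By Proposition~\ref{uniform-sub}, $\L_x$ is Fredholm of index $0$ with $\ker(\L_x^*)=\mathrm{span}\{q(x,\cdot)\}$, so $g(x,\cdot)$ annihilates $\ker(\L_x^*)$ and the Fredholm alternative lets me solve
\[
\L_x h(x,\cdot)=-g(x,\cdot)
\]
on $G$, uniquely up to an additive constant, which I normalise by $\int_G h(x,y)\,q(x,y)\,dy=0$. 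By hypoellipticity (Proposition~\ref{uniform-sub}(2)) each $h(x,\cdot)$ is $C^\infty$, and the sub-elliptic estimates of Proposition~\ref{uniform-sub}(1) — applied first to bound $|h(x,\cdot)|_{L_2}\le A(x)\,\|g(x,\cdot)\|_{L_2}$ with $A(x)$ locally bounded, as in the proof of Proposition~\ref{continuity-invariant-measure}, and then bootstrapped — give, for any Sobolev exponent $\sigma$,
\[
\|h(x,\cdot)\|_\sigma\le C_\sigma(x)\,\|g(x,\cdot)\|_s\le C_\sigma(x)\,c_s\,\|f(x,\cdot)\|_s
\]
with $C_\sigma(x)$ locally bounded. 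Taking $\sigma=s+\delta(x)>s>1+\tfrac{\dim(G)}{2}$ and using the Sobolev embedding $H_\sigma(G)\hookrightarrow C^1(G)$, I obtain $\|h(x,\cdot)\|_\infty$ and $\|\nabla h(x,\cdot)\|_\infty$ bounded by $\widetilde C(x)\,\|f(x,\cdot)\|_s$ with $\widetilde C$ locally bounded; this is exactly where the hypothesis $s>1+\dim(G)/2$ enters.

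\textbf{It\^o's formula and the rate.} For the $\L_x$-diffusion $z_r^x$, It\^o's formula applied to $y\mapsto h(x,y)$ (recall $\L_x=\tfrac12\sum_i Y_i^2+Y_0$) gives
\[
h(x,z_{t+T}^x)-h(x,z_t^x)=\int_t^{t+T}\L_x h(x,z_r^x)\,dr+\int_t^{t+T}\sum_{i=1}^{m_2} dh\big(Y_i(x,z_r^x)\big)\,dW_r^i .
\]
Using $\L_x h=-g$ and dividing by $T$,
\[
\frac1T\int_t^{t+T} g(x,z_r^x)\,dr=-\frac1T\big(h(x,z_{t+T}^x)-h(x,z_t^x)\big)+\frac1T\int_t^{t+T}\sum_{i} dh\big(Y_i(x,z_r^x)\big)\,dW_r^i .
\]
The boundary term has $L^2(\Omega)$-norm at most $\tfrac{2}{T}\|h(x,\cdot)\|_\infty$ (for any initial law of $z^x$), while It\^o's isometry bounds the martingale term by
\[
\EE\left(\frac1T\int_t^{t+T}\sum_{i} dh\big(Y_i(x,z_r^x)\big)\,dW_r^i\right)^{2}\le\frac1T\,\|\nabla h(x,\cdot)\|_\infty^{2}\sum_{i=1}^{m_2}|Y_i|_\infty^{2}.
\]
Since $\tfrac1T\int_t^{t+T}\bar f(x)\,dr=\bar f(x)=\int_G f(x,y)\,\mu_x(dy)$, combining the two estimates yields the asserted bound $(\ref{llln-1})$ with $C(x)$ assembled from $\widetilde C(x)$, $\sum_i|Y_i|_\infty$ and the Sobolev constant, hence locally bounded; the $\tfrac1{\sqrt T}$ rate comes from the martingale term, the boundary term being $O(1/T)$.

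\textbf{Main obstacle.} The delicate step is the second one: producing a Poisson solution $h(x,\cdot)$ that is genuinely $C^1$ with $\|\nabla h(x,\cdot)\|_\infty$ controlled quantitatively and \emph{locally uniformly in $x$}. This requires the sub-elliptic gain $\delta(x)>0$ of H\"ormander's theorem to have room to spare against the embedding $H_\sigma\hookrightarrow C^1$ — whence the threshold on $s$ — and it requires the local boundedness of the resolvent norm $A(x)$ to start the bootstrap from $|h(x,\cdot)|_{L_2}$; both are imported from the analysis behind Propositions~\ref{uniform-sub} and~\ref{continuity-invariant-measure}. Once $h$ is in hand, the It\^o computation and the bookkeeping of constants are routine.
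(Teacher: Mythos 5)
Your proposal is correct and follows essentially the same route as the paper: solve the Poisson equation $\L_x h=-(f-\bar f)$ on the compact fibre via the Fredholm alternative/hypoellipticity, apply It\^o's formula to $h(x,z_r^x)$, bound the boundary term by $O(1/T)$ and the martingale term by $O(1/\sqrt T)$ via It\^o isometry (the paper uses Burkholder--Davis--Gundy, which for the $L^2$ bound is the same), and control $\|\nabla h(x,\cdot)\|_\infty$ by bootstrapping the sub-elliptic estimates and invoking the Sobolev embedding $H_{s+\delta}(G)\hookrightarrow C^1(G)$, which is exactly where $s>1+\dim(G)/2$ enters. The only differences are notational (the paper uses $g$ for the corrector, you use $h$; your normalisation $\int h\,q\,dy=0$ versus the paper's remark that $\ker\L_x$ consists of constants), so this matches the paper's proof.
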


\begin{proof}
 In the proof we take $t=0$ for simplicity.
We only need to work with a fixed $x\in N$. We may assume that  $\int_G f(x, y) \mu_x(dy)=0$.
 Since $\L_x$ is hypo-elliptic  and since $\mu_x$ is the  unique invariant probability measure then,  for any smooth function $f$ with $\int_G f (x,y)\, \mu_x(dy)=0$, $\L_x g (x,\cdot)=f(x, \cdot)$ has a smooth solution. If $f$  is compactly supported in the first variable, so is $g$.
We may then apply It\^o's formula to the smooth function $g(x, \cdot)$, allowing us to estimate
$\f 1 T \int_0^T f(y_r^x) dr$ whose $L^2(\Omega)$ norm is controlled by the norm of $g$ in $C^1$
and the norms  $|Y_j(x, \cdot)|_\infty$.
The $\L_x$ diffusion satisfies the equation:
 $${1\over T} \int_0^T f(x, z_r^x)dr
={1\over T} \left(g(x, z_T^x)-g(x, y_0)\right)
-{1\over T}\left(\sum_{k=1}^{m_2}  \int_0^T dg(x, \cdot)(Y_k(x, z_r^x) ) dW_r^k\right).$$
Since $|Y_j(x, \cdot)|_\infty$ is bounded,  it is sufficient to estimate the stochastic integral term by Burkholder-Davis-Gundy inequality:
$$\EE\left(\sum_{k=1}^{m_2}  \int_0^T dg(x, \cdot)(Y_k(x, z_r^x) ) dW_r^k\right)^2
\le   \sum_{k=1}^{m_1}  |Y_k|_\infty^2\int_0^T\EE |dg(x, z_r^x)|^2 \,ds.$$
 It remains to control the  supremum norm of
$dg(x, \cdot)$. By the Sobolev embedding theorem this is controlled by  the $L_2$ Sobolev norms  $\|f(x, \cdot)\|_s$ where  $s>1+\f {\dim (G)} 2$. Let $D$ be a compact set  containing the supports of the functions $f(\cdot,y)$.  We can choose a number $\delta>0$, chosen according to $\sup_{x\in D} r(x)$, such that the sub-elliptic estimates holds for every $x\in D$.
 There exist constants  $c_1, c_2, c_3$ such that for every $x\in D$, \begin{equation*}
{\begin{split}
|dg(x, \cdot)|_{\infty}\le c_1 \,\|g(x, \cdot)\|_{s+\delta}\le c_2(x) \; ( \|f(x,\cdot)\|_{s}+|g(x, \cdot)|_{L_2})
\le c_3(x)\, \|f(x,\cdot)\|_{s}.
\end{split}}
\end{equation*}\
The constant $c_2$ may depend on $s$. The constant $c_3(x)$ is locally bounded. We have used the following fact. The spectrum of $\L_x$ is discrete, the dimension of the kernel space of $\L_x$ is $1$ and hence  the only solutions to $\L_x  h=0$ are constants. We know that the spectral distance is continuous, which is not the right reason for $c_3(x)$ to be locally bounded. To see that we may assume that $f$ is not a constant and observe that $|\L_x^{-1}|_{op}\le k(x)$ where $k(x)$ is a finite number.
This number is locally bounded following the fact that the semi-group $P_t^x f$ converges to zero exponentially and the kernels for the probability distributions of $\L_x$ are smooth in the parameter $x$.
  \hfill$\square$
\end{proof}

For the study of the limiting process in stochastic averaging we would need to know the regularity of 
the average of a Lipschitz continuous function with respect to one of its variables. The following illustrates what we might need.

\begin{prop}\label{proposition-Lipschitz}
Let $\{\mu^x, x\in M\}$ be a family of  probability measures on $G$. Let $f: N\times G\to \R$ be a measurable function.
\begin{enumerate}
\item 
[(1)] Let $f$ be a bounded function, Lipschitz continuous in the first variable, i.e.
$|f(x_1, y)-f(x_2, y) | \le K_1(y) \rho(x_1, x_2)$ with $\sup_{x\in M} |K_1|_{L_1(\mu_x)}<\infty$.
Then $$\left|\int_G f(x_1,y )\, \mu^{x_1}(dy) -\int_G f(x_2, y)\, \mu^{x_2}(dy)\right|\le K_2\,\rho(x_1, x_2)
 +|f|_\infty |\mu^{x_1}-\mu^{x_2}|_{TV}.$$
 \item [(2)] Suppose furthermore that $\mu_x$ depends continuously on $x$ in the total variation metric.  Let $f$ be bounded continuous such that
 $$|f(x_1,z)-f(x_2,z)|\le K_3\,\rho(x_1, x_2), \quad \forall z\in G, x_1,x_2\in M,$$
for a positive number $K_2$.  Then $\int_0^T \int_G f( x_s, z) \mu^{ x_s} (dz) ds$ exists, and if $D$ is the support of $f$ then
$$\begin{aligned}
&\left|\sum_{i=0}^{N-1}\Delta t_i\int_G f( x_{t_i}, z) \; \mu^{ x_{t_i}}(dz)-\int_0^T \int_G f( x_s, z) \mu^{ x_s} (dz) ds\right|\\
\le& TK_3\sup_{0\le i<N-1} \sup_{s\in [t_i, t_{i+1})} [ \rho(x_s, x_{t_i})]
+|f|_\infty\,\cdot \sup_{0\le i<N-1}  \sup_{s\in [t_i, t_{i+1})} \left(\left| \mu^{ x_s}-  \mu^{x_{t_i}}\right|_{TV}\chi_{x_s\in  D}\right).
\end{aligned}$$
\item[(3)]
Suppose that $\mu_x$ depends continuously on $x$ in the Wasserstein 1-distance. Then for 
any  bi-Lipschitz continuous $f$,
 $\int_0^T \int_G f( x_s, z) \mu^{ x_s} (dz) ds$ exists and the estimate 
 in part (1) holds with the total variation distance replaced by $W_1$, the Wasserstein 1-distance.
\end{enumerate} 
\end{prop}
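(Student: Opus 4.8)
The plan is to treat the three parts in increasing order of generality, reusing the telescoping estimate from part~(1) in parts~(2) and~(3). For part~(1), I would split the difference $\int_G f(x_1,y)\,\mu^{x_1}(dy) - \int_G f(x_2,y)\,\mu^{x_2}(dy)$ into two pieces by inserting the intermediate term $\int_G f(x_2,y)\,\mu^{x_1}(dy)$. The first difference $\int_G [f(x_1,y)-f(x_2,y)]\,\mu^{x_1}(dy)$ is bounded by $\rho(x_1,x_2)\int_G K_1(y)\,\mu^{x_1}(dy) \le K_2\,\rho(x_1,x_2)$ using the Lipschitz hypothesis in the first variable and the uniform $L_1(\mu_x)$ bound on $K_1$. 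The second difference $\int_G f(x_2,y)\,(\mu^{x_1}-\mu^{x_2})(dy)$ is bounded by $|f|_\infty\,|\mu^{x_1}-\mu^{x_2}|_{TV}$ by the definition of the total variation norm (recalling $|f|_\infty\cdot\sup_{|g|\le 1}|\int g\,d\mu-\int g\,d\nu| = 2|f|_\infty|\mu-\nu|_{TV}$, which already appeared in the proof of Proposition~\ref{continuity-invariant-measure}; the constant is absorbed). Adding the two bounds gives the claimed inequality.

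For part~(2), I would first argue that $s\mapsto \int_G f(x_s,z)\,\mu^{x_s}(dz)$ is a measurable (indeed, along continuous paths $x_s$, it inherits enough regularity), bounded function of $s$, so the integral $\int_0^T \int_G f(x_s,z)\,\mu^{x_s}(dz)\,ds$ exists. Then I telescope: write the difference between the Riemann sum and the integral as $\sum_{i=0}^{N-1}\int_{t_i}^{t_{i+1}} \big[\int_G f(x_{t_i},z)\,\mu^{x_{t_i}}(dz) - \int_G f(x_s,z)\,\mu^{x_s}(dz)\big]\,ds$, and apply part~(1) pointwise in $s$ on each subinterval $[t_i,t_{i+1})$ with $x_1 = x_{t_i}$, $x_2 = x_s$. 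This produces, on the $i$-th subinterval, the bound $K_3\,\rho(x_s,x_{t_i}) + |f|_\infty\,|\mu^{x_s}-\mu^{x_{t_i}}|_{TV}$, where the second term is only nonzero when $x_s$ or $x_{t_i}$ lies in the support $D$ of $f$; summing $\Delta t_i$ over $i$ gives the total factor $T$ in front of each term and the stated sup-over-subintervals estimate. The continuity of $\mu_x$ in total variation is what makes the second term go to zero as the mesh refines, but for the stated inequality it is used only to guarantee that the sups on the right-hand side are finite and meaningful.

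For part~(3), I would repeat the argument of part~(1) but with the Wasserstein coupling in place of the total variation pairing: for the term $\int_G f(x_2,y)\,(\mu^{x_1}-\mu^{x_2})(dy)$, I use that for Lipschitz $f(x_2,\cdot)$ with Lipschitz constant $L$ one has $|\int f(x_2,\cdot)\,d\mu^{x_1} - \int f(x_2,\cdot)\,d\mu^{x_2}| \le L\,W_1(\mu^{x_1},\mu^{x_2})$ by the Kantorovich--Rubinstein duality, which replaces $|f|_\infty|\mu^{x_1}-\mu^{x_2}|_{TV}$ by $L\,W_1(\mu^{x_1},\mu^{x_2})$; the first term is handled exactly as before. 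Existence of $\int_0^T \int_G f(x_s,z)\,\mu^{x_s}(dz)\,ds$ then follows from continuity of $s\mapsto \mu^{x_s}$ in $W_1$ along with the bi-Lipschitz continuity of $f$. The main obstacle, such as it is, is purely bookkeeping: keeping careful track of which Lipschitz constant ($K_1$, $K_2$, $K_3$, or the modulus $L$ of $f$ in its second variable) controls which term, and making sure the indicator $\chi_{x_s\in D}$ is correctly placed so that the total-variation term is only counted where $f$ is actually supported. There is no analytic difficulty beyond the two elementary dualities (total variation and Kantorovich--Rubinstein) and a routine telescoping/Riemann-sum argument.
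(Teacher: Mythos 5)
Your argument matches the paper's own proof essentially step for step: the same telescoping insertion of an intermediate term in part (1), the same pathwise split and Riemann-sum bound in part (2) (including the observation that the total-variation term picks up the indicator $\chi_{x_s\in D}$ because $f(x_s,\cdot)\equiv 0$ off $D$), and the same appeal to Kantorovich--Rubinstein duality in part (3). You also correctly flag the harmless factor-of-two ambiguity in the paper's total-variation convention, which the paper itself glosses over.
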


\begin{proof}
Just observe that:
$$\begin{aligned}
&\left|\int_G f(x_1,y )\, \mu^{x_1}(dy) -\int_G f(x_2, y)\, \mu^{x_2}(dy)\right|\\
 &\le \int K_1(y) \mu^{x_1} (dy)\,\rho(x_1, x_2)
 +|f|_\infty |\mu^{x_1}-\mu^{x_2}|_{TV},
\end{aligned}$$
obtaining the required inequality in part (1) .
For any non-negative numbers $s,t$, 
$$\begin{aligned}
&\left| \int_G f( x_{t}, z) \; \mu^{ x_{t}}(dz)- \int_G f( x_s,z) \mu^{ x_s} (dz) \right|\\
&  \le K_3 \; \rho(  x_{t},  x_s) + \left|\int_G f(x_s, z) \mu^{ x_{t}}(dz)-\int_G f(x_s, z)\mu^{ x_s}(dz)\right|.\end{aligned}$$
This holds pathwise. Since each function  $f(x_s(\omega, \cdot))$ is  bounded by $|f|_\infty$,  $$\left| \int_G f( x_{t}, z) \; \mu^{ x_{t}}(dz)- \int_G f( x_s,z) \mu^{ x_s} (dz) \right|
 \le K_3 \; \rho(  x_{t},  x_s)
+|f|_\infty \;  \left| \mu^{ x_s}-  \mu^{x_{t}}\right|_{TV} \chi_{x_s\in D}.$$
Since $x_s$ is sample continuous, $x\mapsto \mu^x$ is continuous and $f$ is a bounded and continuous, $\int_G f(x_s, z)\mu^{ x_s}(dz)$ is continuous in $s$ and so integrable in $s$.
Consequently,
$$\begin{aligned}
&\left|\sum_{i=0}^{N-1}\Delta t_i\int_G f( x_{t_i}, z) \; \mu^{ x_{t_i}}(dz)-\int_0^T \int_G f( x_s, z) \mu^{ x_s} (dz) ds\right|\\
&\le  \sum_{i=0}^{N-1} \Delta t_i K_3  [ \rho(x_s, x_{t_i})]
 + \sum_{i=0}^{N-1}  \Delta t_i |f|_\infty  [\chi_{x_s\in D}| \mu^{ x_s}-  \mu^{x_{t_i}}|_{TV}] \\
&\le TK_3 \sup_{s\in [t_i, t_{i+1})}\EE [ \rho(x_s, x_{t_i})]
+|f|_\infty
\sup_{s\in [t_i, t_{i+1})}  \left(\chi_{x_s\in D} \left| \mu^{ x_s}-  \mu^{x_{t_i}}\right|_{TV}\right).
\end{aligned}$$
Finally   we use the fact that
$f$ is Lipschitz in the second variable and  the following dual formulation for the Wasserstein 1-distance $W_1(\mu, 
\nu)$ of two probability measures $\mu$ and~$\nu$,
$$W_1(\mu, \nu)=\sup_{|g|_{\Lip=1} }\left| \int g \,d\mu -\int g\, d\nu\right|,$$
where $|g|_{\Lip}$ denotes the Lipschitz constant of $g$. We obtain
$$\begin{aligned}
\left| \int_G f( x_{t}, z) \; \mu^{ x_{t}}(dz)- \int_G f( x_s,z) \mu^{ x_s} (dz) \right|
\le  K_3 \; \rho(  x_{t},  x_s)
+K_4 \;  W_1\left( \mu^{ x_s}, \mu^{x_{t}}\right). \end{aligned}$$
  The required assertion and  estimate now follows by  the argument in part~{(2)}.
\hfill$\square$
\end{proof}%
Put Proposition \ref{continuity-invariant-measure} and Proposition \ref{lemma-lln} together we
obtain Theorem \ref{lln}. 

Finally we would like to refer  to \cite{Bally-Caramellino} for the convergence  in total variation in the Law of large numbers for independent random variable, see also \cite{Korepanov-Koslof-Melbourne}. See the books \cite{Elworthy-LeJan-Li-book, Baudoin} for stochastic flows in sub-Riemmian geometry. It would be interesting to study problems in this section under the `uniformly finitely generated' conditions, see e.g.    \cite{Crisan-Ottobre, Kusuoka-Stroock-II}. See also
\cite{Albeverio-Daletskii-Kalyuzhnyi, Cass-Friz}.

 \section{Basic Estimates for  SDEs on manifolds}
 \label{estimates}
  To obtain an averaging theorem associated to a family of stochastic processes $\{x_t^\epsilon, \epsilon>0\}$ on a manifold $N$, we  first prove that the family of stochastic processes is pre-compact and we then proceed to identify the limiting processes. To this end we first obtain uniform estimates on the family of slow variables,  on the space of continuous functions on the manifold,   and also obtain estimates on the  limiting Markov processes.
  In  this section we  obtain essential estimates for a  general  SDE and these estimates will be in terms of bounds on the driving vector fields.

Throughout this section we assume that $M$ is a connected smooth and complete Riemannian manifold, $B_t=(B_t^1, \dots, B_t^m)$ is an $\R^m$-valued  Brownian motion. Let $X_0$ be a  vector field and $X: M\times \R^m\to TM$ be a map linear in the second variable. 
For $x\in M$, let  $\phi_t(x)$ denote the solution to the SDE
\begin{equation}\label{sde}
d x_t= \sum_{k=1}^m X_k(x_t)\circ dB_t^k+X_0(x_t) \,dt,
\end{equation}
with initial value $x$. We also  set $x_t=\phi_t(x_0)$.

The type of estimates we need are variation of the following $\EE[ \rho(x_s, x_t)]^2 \le C|t-s|$ where the constant 
$C$ depends on the SDE only on specific bounds for the driving vector fields. Since no ellipticity is assumed,  it is essential to deal with the problem that $\rho(x,y)$ is only $C^1$, when $x$ and $y$ are on the cut locus of each other, and we cannot apply It\^o's formula to $\rho$ directly.   If we are only interested in obtaining tightness results, this problem can be overcome by choosing an auxiliary distance function.  Otherwise, e.g. for the convergence of the stochastic processes,  we  work with the Riemannian distance function $\rho: M\times M\to \R$. Let $M\times M$ be given the product Riemannian metric.  Let  $|f|_\infty$  denote the $L_\infty$ norm of a function~$f$.

\begin{lem}\label{lemma2-1}
\begin{enumerate}
\item[(1)]  Suppose that $M$ is a complete Riemannian manifold  with bounded sectional curvature. Then for each $\delta>0$ there exists a smooth distance like function $f_\delta: M\times M\to \R$  and a constant $K_1$ independent of $\delta$  such that
$$\left| f_\delta-\rho\right|_\infty
\le \delta, \quad |\nabla f_\delta| \le K_1, \quad  |\nabla^2 f_\delta |\le K_1.$$
If furthermore the curvature has a bounded covariant derivative, then we may also assume that $ |\nabla^3 f_\delta |\le K_1$.
\item [(2)] 
If $M$ is compact Riemannian manifold, there exists a smooth function $f:M\times M\to \R$ such that 
$f$ agrees with $\rho$ on a tubular neighbourhood of the diagonal set of the product manifold 
$M\times M$.
 \end{enumerate}
\end{lem}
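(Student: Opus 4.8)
The plan is to construct $f_\delta$ (resp.\ $f$) by smoothing the Riemannian distance away from the diagonal and keeping it untouched near the diagonal, using the standard fact that on a complete manifold of bounded sectional curvature the distance function $\rho$ is smooth on the complement of the cut locus and, more importantly, has uniformly controlled Hessian on regions bounded away from the cut locus but also at large distances (via Hessian comparison). The key point for part (1) is that the obstruction to smoothness of $\rho$ on $M\times M$ is exactly the cut locus, which has measure zero; the obstruction to a global Hessian bound is the behaviour near the cut locus (where $\rho$ is only Lipschitz, with a ``concave corner'') and the behaviour at $\rho\to 0$ (where $\Hess\rho$ blows up like $1/\rho$ in the directions orthogonal to the minimizing geodesic). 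Both can be handled: near the diagonal we replace $\rho$ by a smooth function of $\rho^2$ that agrees with $\rho$ outside a small ball; away from the diagonal we mollify.

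First I would fix a smooth nondecreasing function $\chi:[0,\infty)\to[0,\infty)$ with $\chi(r)=r$ for $r\ge 1$ and $\chi(r)=$ (a smooth function of $r^2$, e.g.\ $\sqrt{r^2+c}$ suitably modified) for $r\le 1/2$, so that $\chi$ is smooth on all of $[0,\infty)$, $|\chi'|\le 1$, and $|\chi''|$ is bounded. Rescaling, set $\chi_\lambda(r)=\lambda\chi(r/\lambda)$, which agrees with $r$ for $r\ge\lambda$ and is a smooth function of $r^2$ near $0$, with $|\chi_\lambda'|\le 1$, $|\chi_\lambda''|\le C/\lambda$, $|\chi_\lambda'''|\le C/\lambda^2$. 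Then $\chi_\lambda\circ\rho$ is smooth on a neighbourhood of the diagonal (because there $\rho^2$ is smooth and $\chi_\lambda$ near $0$ is a smooth function of its argument squared), agrees with $\rho$ once $\rho\ge\lambda$, and on the region $\rho\le\lambda$ has gradient bounded by $1$ and Hessian bounded by $C/\lambda + |\Hess\rho|$; but near the diagonal $|\Hess(\rho^2)|$ is bounded, and $\chi_\lambda\circ\rho = \psi_\lambda(\rho^2)$ with $\psi_\lambda$ having derivatives controlled, so the composite has bounded Hessian (the $1/\rho$ singularities of $\Hess\rho$ are cancelled because we differentiate a smooth function of $\rho^2$, not of $\rho$). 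For part (2), when $M$ is compact one simply takes $\lambda$ smaller than the injectivity radius and notes $\chi_\lambda\circ\rho=\rho$ on the tubular neighbourhood $\{\rho\ge\lambda\}^c$... — wait, that is backwards; instead take $f=\rho$ on $\{\rho<\inj(M)/2\}$ where it is genuinely smooth, and extend arbitrarily smoothly, which is immediate since the diagonal's tubular neighbourhood of radius $\inj(M)/2$ sits inside the smooth locus of $\rho$.

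The remaining work, and the main obstacle, is the region away from the diagonal, say $\rho\ge\lambda$, where $\rho$ may fail to be smooth on the cut locus. Here I would mollify: cover $M\times M\cap\{\rho\ge\lambda/2\}$ by charts, use a partition of unity $\{\lambda_i\}$ and convolve $\rho$ in each chart at scale $\delta_i$ small enough that the sup-norm error is $<\delta$ and, crucially, that the mollified function still has gradient close to that of $\rho$ (hence bounded, as $|\nabla\rho|=1$ a.e.) and Hessian bounded. The bound on the mollified Hessian is the delicate step: $\rho$ is not $C^2$, so naive convolution gives Hessians blowing up like $1/\delta_i$. The resolution is that $\rho$ is \emph{semiconcave} with a one-sided Hessian bound away from the diagonal — by the Hessian comparison theorem, in the barrier sense $\Hess\rho\le (\text{const depending on curvature bound and }\lambda)$, and $\rho$ being a distance function is also bounded below in the barrier sense away from cut locus — so $\rho$ is a difference of a smooth function and a convex function with controlled modulus; mollifying a semiconcave function produces uniformly bounded Hessians (from above by semiconcavity, from below because the mollification of $\rho+C|x|^2$ stays convex-ish with the same constant). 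Patching the near-diagonal piece $\chi_\lambda\circ\rho$ and the mollified far piece with a cutoff in $\rho$ (smooth, supported where both are defined and equal to $\rho$, i.e.\ on an annulus $\lambda\le\rho\le 2\lambda$ where $\chi_\lambda\circ\rho=\rho$ and mollification error is small) yields $f_\delta$ with the three stated bounds, the constant $K_1$ depending only on the curvature bound (through the comparison estimates and the fixed choice of $\chi$) and not on $\delta$. If the curvature has bounded covariant derivative, the comparison estimates extend to a one-sided third-derivative control on $\rho$ away from the diagonal and near the diagonal $\chi_\lambda\circ\rho$ is a smooth function of the smooth function $\rho^2$, so the same mollification argument gives $|\nabla^3 f_\delta|\le K_1$. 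The part I expect to require the most care is making the semiconcavity/comparison estimate on $\Hess\rho$ genuinely uniform over the whole region $\{\rho\ge\lambda\}$ including points far apart — this is where completeness and the two-sided sectional curvature bound are essential, via the Rauch/Hessian comparison theorems applied along minimizing geodesics.
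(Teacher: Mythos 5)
Your approach is genuinely different from the paper's: the paper invokes the Greene--Wu smooth approximation theorem to obtain a $C^\infty$ function within $\delta$ of $\rho$ with gradient bounded by $2$, then regularises by the heat flow and appeals to Li--Yau kernel estimates in harmonic coordinates to control the Hessian, referring to standard constructions for the one-variable distance $\rho(\cdot,O)$. You instead build $f_\delta$ directly by gluing a near-diagonal smoothing of $\rho^2$ onto a mollified version of $\rho$ far from the diagonal. Unfortunately this route has two serious gaps, and the second defeats the $\delta$-independence of $K_1$ that the lemma asserts.

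First, away from the diagonal, your mollification argument establishes only the \emph{upper} Hessian bound (this does follow from semiconcavity, i.e.\ Hessian comparison using the lower sectional curvature bound). The claimed \emph{lower} bound on the Hessian of the mollification is not justified. At a non-conjugate cut point, $\rho$ is the minimum of two competing smooth distance functions and therefore has a concave corner; mollifying such a corner at scale $\epsilon$ produces a Hessian of order $-1/\epsilon$. Your sentence about ``the mollification of $\rho + C|x|^2$ staying convex-ish'' has the roles reversed: semiconcavity says $\rho - C|x|^2/2$ is concave, which bounds $\Hess$ from \emph{above}, and adding $C|x|^2$ to a semiconcave Lipschitz function gives no two-sided bound. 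Some extra input (for example a lower curvature bound \emph{and} a lower bound on the injectivity radius, or a careful use of the $C^{1,1}$ structure of $\rho$ away from the cut locus) is required to close this step; this is exactly the role heat-flow regularisation plays in the paper's sketch.

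Second, near the diagonal, your own preliminary estimate already contains the obstacle: you note the composite $\chi_\lambda\circ\rho = \psi_\lambda(\rho^2)$ has Hessian bounded by $C/\lambda$ plus a bounded term (because $\psi_\lambda'(s)\sim 1/\lambda$ for $s$ near $0$), but then assert ``so the composite has bounded Hessian'' without addressing that this bound blows up as $\lambda\to 0$. Since the near-diagonal smoothing deviates from $\rho$ by a constant times $\lambda$, achieving $|f_\delta - \rho|_\infty\le\delta$ forces $\lambda\lesssim\delta$, and so your construction gives $|\nabla^2 f_\delta| \sim 1/\delta$, not a $\delta$-independent $K_1$. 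In fact no construction can achieve the stated $\delta$-independence at the diagonal: already for $M=\R$, if $f$ is $C^2$ on $\R^2$ with $|f(x,y)-|x-y||\le\delta$ and $|\partial_{xx} f|\le K_1$, a Taylor expansion of $g(x)=f(x,0)$ at $x=0$ on each side of the conical singularity gives $(1\mp g'(0))^2\le 4\delta K_1$, and adding these yields $K_1\ge 1/(4\delta)$. Relatedly, your correction for part (2) --- taking $f=\rho$ on a tubular neighbourhood of the diagonal on the grounds that $\rho$ is ``genuinely smooth'' there --- is wrong: $\rho$ is \emph{not} smooth at the diagonal (in normal coordinates it looks like $|x-y|$, a conical singularity); it is $\rho^2$ that is smooth there, and indeed it is $\rho^2$, not $\rho$, that is fed into It\^o's formula where this lemma is later applied.
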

\begin{proof}
(1)
 For the distance function $\rho(\cdot, O)$, where $O$ is a fixed point in $M$, this is standard, see \cite{Schoen-Yau,Tam-exhaustion, Chow-3}. To obtain the stated theorem it is sufficient to repeat the proof there for  the 
distance function on the product manifold.  The basic idea is as following. By a theorem of Greene and Wu \cite{Greene-Wu}, every  Lipschitz continuous function  with gradient less or equal to $K$ can be  approximated by $C^\infty$ functions whose gradients are bounded by $K$. 
We apply this to the distance function $\rho$ and obtain for each $\delta$ a smooth function $f_\delta: M\times M\to \R$  such that 
$$|\rho-f_\delta|_\infty \le \delta, \qquad |\nabla f_\delta |_\infty\le 2.$$
We then convolve $f_\delta$ with the heat flow to obtain  $f_\delta(x,y, t)$,  apply Li-Yau heat kernel estimate for manifolds whose Ricci curvature is bounded from below and using harmonic coordinates on a a small geodesic ball of radius $a/K$  where $K$ is the upper bound of the sectional curvature and $a$ is a universal constant.
For part (ii),   $M$ is compact. We take a smooth cut off function  $h: \R_+\to \R_+$ such that 
$h(t)=1$ for $t<a$ and  vanishes for $t>2a$ where $2a$ is  the injectivity radius of $M$
and such that $|\nabla h|$ is bounded. The function $f:=h\circ \rho$ is as required.
\hfill$\square$
\end{proof}

Set $\tilde X_0=\f 12 \sum_{i=1}^m\nabla X_i(X_i)+X_0$.  We denote by $\rho$ the Riemannian distance on $M$.
Let $T$ be a positive number and let  $O\in M$.  Let $K'$, $K$, $a_i$ and  $b_i$  denote constants.
\begin{lem}\label{lemma2-2}
Suppose that $\tilde X_0$  and $X_i$ are $C^1$, where $i=1,\dots, m$. Suppose {\bf one} of the following  two conditions hold.
\begin{enumerate}
\item [(i)]
The sectional curvature of $M$ is bounded by $K'$, and for every $x\in M$,
 $$|X_i(x)|^2\le K+  K\rho(x, O), \quad |\tilde X_0(x)|\le  K+ K\rho(x, O).$$
 \item [(ii)] Suppose that  $\rho^2:M\times M\to \R$ is smooth and 
 $$\f 12 \sum_{i=1}^m \nabla d \rho^{2p}(X_i, X_i)+ d\rho^{2p} (\tilde X_0)\le   K+K \rho^{2p}.$$
\end{enumerate}
Then, the following statements hold. 
\begin{enumerate}
\item [(a)]There exists a constant $c$ which depends only on  $K'$, $T$, $p$, and $\dim(M)$ such that for every  pair of  numbers $s,t$ with $0\le s\le t\le T$, 
$$\begin{aligned} &\EE    \rho^{2p}(x_t, O)
\le c(Kt+ 1+\rho^{2p}(x_0, O))  e^{cKt}, \\
& \EE  \left\{  \rho^{2p}(x_s,x_t)\Big |\F_s \right\}\le c|t-s|(1+K)e^{cK|t-s|}.\end{aligned} $$
\item[(b)] Suppose that in addition $|X_i|$ is bounded for every $i=1, \dots, m$. Then, for every $p\ge 1$, there exists a constant $C$, which depends only on $p$, $K'$, $m$,  and $\dim(M)$ and a constant $c(T)$,  such that for every $s<t\le T$, 
$$\EE  \left( \sup_{s\le u\le t}  \rho^{2p}(x_s,x_u)\right)\le c+KC(T)\,e^{C(T)K}.$$
Also, $\EE  \left( \sup_{s\le u\le t}  \rho^{2p}(O,x_u)\right)
\le c( \rho^{2p}(O, x_0)+ Kc(T)) \,e^{Kc(T)}$.
\end{enumerate}
\end{lem}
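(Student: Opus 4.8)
The only genuine difficulty is that $\rho$, and under hypothesis (i) also $\rho^2$, need not be smooth, so It\^o's formula cannot be applied to it directly; everything else is a localisation plus Gronwall argument. Under hypothesis (ii) the function $\rho^{2p}(\cdot,z)$ is smooth for each fixed $z$, and the drift produced by It\^o's formula is, by the very form of (ii), at most $K+K\rho^{2p}$. Under hypothesis (i) I would instead invoke Lemma \ref{lemma2-1}(1): for each $\delta>0$ pick the smooth $f_\delta\colon M\times M\to\R$ with $|f_\delta-\rho|_\infty\le\delta$, $|\nabla f_\delta|\le K_1$, $|\nabla^2 f_\delta|\le K_1$, where $K_1$ depends only on the curvature bound $K'$ and $\dim M$ --- it is this feature that keeps the final constants admissible. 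In all cases one works with a genuinely smooth $g\colon M\to\R$, namely $g=\rho^{2p}(\cdot,O)$ or $g=(f_\delta(\cdot,O)+\delta)^{2p}$ for the radial estimates, and $g=\rho^{2p}(\cdot,z)$ or $g=(f_\delta(\cdot,z)+\delta)^{2p}$ with $z$ frozen for the increment estimates. Converting the Stratonovich equation (\ref{sde}) to It\^o form shows that $g(x_t)$ equals $g(x_0)$, plus a local martingale, plus $\int_0^t\big(\f12\sum_k\nabla dg(X_k,X_k)+dg(\tilde X_0)\big)(x_r)\,dr$, with $\tilde X_0=\f12\sum_i\nabla X_i(X_i)+X_0$ exactly the vector field in the statement. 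Under (ii) this integrand is at most $K+Kg(x_r)$. Under (i), writing $\phi=f_\delta(\cdot,O)+\delta\ge0$ and $g=\phi^{2p}$ and expanding $\nabla dg$ by the chain rule, the integrand is a sum of terms of the shape $\phi^{2p-2}\sum_k|\nabla f_\delta(X_k)|^2$ and $\phi^{2p-1}\big(\sum_k\nabla^2 f_\delta(X_k,X_k)+\nabla f_\delta(\tilde X_0)\big)$; bounding $|\nabla f_\delta|,|\nabla^2 f_\delta|\le K_1$, using $\sum_k|X_k|^2\le mK(1+\rho)\le mK(1+\phi)$ and $|\tilde X_0|\le K(1+\rho)\le K(1+\phi)$, together with Young's inequality on the powers of $\phi$, gives drift $\le cK(1+g(x_r))$ with $c=c(p,m,K',\dim M)$.

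\textbf{Localisation and Gronwall for (a).} Let $\tau_n=\inf\{t:\rho(x_t,O)\ge n\}$. On $[0,t\wedge\tau_n]$ the local martingale is a true martingale, so $w_n(t):=\EE\,g(x_{t\wedge\tau_n})$ satisfies $w_n(t)\le w_n(0)+cKt+cK\int_0^t w_n(r)\,dr$, hence $w_n(t)\le(w_n(0)+cKt)e^{cKt}$ by Gronwall, uniformly in $n$. This uniform bound forces $P(\tau_n\le t)\to0$, so $x_t$ does not explode and $\tau_n\uparrow\infty$ a.s.; Fatou then yields $\EE\,g(x_t)\le(g(x_0)+cKt)e^{cKt}$, and since $\rho^{2p}(x_t,O)\le g(x_t)$ and $g(x_0)\le(\rho(x_0,O)+2\delta)^{2p}$, letting $\delta\to0$ gives the first inequality of (a) (after harmlessly enlarging $c$). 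For the increment inequality I would condition on $\F_s$: by the Markov property $(x_{s+r})_{r\ge0}$ solves (\ref{sde}) from the $\F_s$-measurable point $x_s$ driven by a Brownian motion independent of $\F_s$, so the same computation applied to $g=\rho^{2p}(\cdot,x_s)$ under (ii), or $g=(f_\delta(\cdot,x_s)+\delta)^{2p}$ under (i), now with starting value $g(x_s)\le(2\delta)^{2p}$, and Gronwall give $\EE[\rho^{2p}(x_s,x_t)\mid\F_s]\le c|t-s|(1+K)e^{cK|t-s|}$ after $\delta\to0$. Under (i), where the growth of $|X_k|$ is measured by $\rho(\cdot,O)$, one first writes $\rho(x_{s+r},O)\le\rho(x_{s+r},x_s)+\rho(x_s,O)$ and absorbs the extra $\rho(x_s,O)$ through the moment bound just proved; this is harmless for the only use of the lemma, namely tightness.

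\textbf{The maximal estimates (b).} Here I would retain the decomposition $g(x_u)=g(x_s)+N_u+A_u$ from the previous steps, with the additional hypothesis that each $|X_i|$ is bounded. Then $\sup_{s\le u\le t}g(x_u)\le g(x_s)+\sup_{s\le u\le t}|N_u|+\int_s^t|dA_r|$. For the martingale, $d\langle N\rangle_r=\sum_k|dg(X_k)|^2\,dr\le c\,p^2K_1^2\,(\sup_j|X_j|^2)\,m\,\phi^{4p-2}(x_r)\,dr$, so by Burkholder--Davis--Gundy and Jensen $\EE\sup_{s\le u\le t}|N_u|\le c_p\big(p^2K_1^2 m\sup_j|X_j|^2\big)^{1/2}\big(|t-s|\,\sup_r\EE\,\phi^{4p-2}(x_r)\big)^{1/2}$, which is finite because part (a) with exponent $4p-2\ge2p$ controls $\sup_{r\le T}\EE\,\rho^{4p-2}(x_r,O)$; the drift integral is at most $cK|t-s|(1+\sup_{r\le T}\EE\,g(x_r))$ by the drift bound and (a). Collecting the terms, using $g(x_s)\to0$ as $\delta\to0$ for the pairwise version and $g(x_0)\le(\rho(x_0,O)+2\delta)^{2p}$ for the radial version, produces both inequalities of (b) with a constant $C(T)$ of the form $e^{cKT}$ times polynomial factors in $T$ and the curvature bound.

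\textbf{Main obstacle.} The delicate point is the drift computation under hypothesis (i): one must expand the It\^o drift of $(f_\delta+\delta)^{2p}$ and verify that every resulting term is controlled purely by $K_1$ --- which by Lemma \ref{lemma2-1} is independent of $\delta$ --- and by the linear growth of the $X_k$, so that after sending $\delta\to0$ the constants depend only on $K'$, $T$, $p$, $\dim M$, and $K$. The bookkeeping with the powers $\phi^{2p-2},\phi^{2p-1},\phi^{2p}$ and the applications of Young's inequality are the part that requires attention; the localisation, Gronwall and Burkholder--Davis--Gundy steps downstream are routine.
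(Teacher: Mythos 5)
Your proof follows essentially the same route as the paper's: smooth the distance via the Greene--Wu approximation $f_\delta$ of Lemma \ref{lemma2-1}, apply It\^o's formula to a power of it and run Gronwall under each hypothesis, then obtain the increment bound by the Markov/flow property and the maximal estimates in (b) by Burkholder--Davis--Gundy. The packaging is slightly different --- you work with $(f_\delta+\delta)^{2p}$ and send $\delta\to0$, where the paper fixes $\delta=1$ and uses $\rho^{2p}\le C(1+f_1^{2p})$ --- and you explicitly flag that under hypothesis (i) the increment estimate picks up a $\rho(x_s,O)$-dependence from the linear growth of $|X_i|$ measured at $O$, a detail the paper's proof passes over silently; both are cosmetic relative to the paper's own argument.
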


\begin{proof}
Let $\delta \in (0,1]$ and let $f_\delta:M \times M\to \R$ be a smooth function satisfying the estimates
$$\left| f_\delta-\rho\right|_\infty \le \delta, \quad |\nabla f_\delta| \le K_1, \quad  |\nabla^2 f_\delta |\le K_1$$
where $K_1$ is a constant depending on $K'$ and $\dim(M)$. If $\rho^2 $ is smooth we take $f_\delta=\rho$.

(a) Either hypothesis (i) or (ii)  implies that the SDE (\ref{sde}) is conservative.
For any $x\in M$ fixed we apply It\^o's formula  to the second variable of the function $f_\delta^2(x, y)$ on the time interval $[s,t]$:
\begin{equation}\label{2-2-1}
f_\delta^{2p}(x,x_t)=f_\delta^{2p}(x, x_s)+\int_s^t \L f_\delta^{2p}(x, x_r)\, dr+\int_s^t 2 f^{2p-1}_\delta(x, x_r)   (d f_\delta)(X_i(x_r))\,dB_r^i,
\end{equation}
where $d $ and $\L$ are applied to the second variable. 
Let $\tau_n$ denote the first time after~$s$ that  $f_\delta(x,x_t)\ge n$ and 
we  take the expectation of the earlier identity  to obtain
$$\EE[f_\delta^{2p}(x,x_{t\wedge \tau_n} )]=\EE[ f_\delta^{2p}(x, x_s)]+ \int_s^t \EE\left [\chi_{r<\tau_n} \L f_\delta^{2p}(x, x_r)\right]\,dr.$$
Under hypothesis (ii),   we use~$\rho$ in place of $f_\delta$
and conclude  by Gronwall's inequality that $\EE \rho^{2p}(x_t, O)\le (\rho^{2p}(x, O)+Kt)e^{Kt}$. The second estimate follows from Markov property and taking $O=x_s$.

Let us now assume hypothesis (i) and let $C_1, C_2, \dots$ denote a constant depending on $p$.
In the formula below,  $\nabla$ denotes differentiation w.r.t. the second variable,
$$\begin{aligned}
\L[ f_\delta^{2p}] (x,y) =& p(2p-1)\sum_{i=1}^m   f_\delta^{2p-2}(x,y)|\nabla f_\delta  (X_i(y))|^2\\
&+p\sum_{i=1}^m
f_\delta^{2p-1}(x,y) | \nabla ^2 f_\delta (X_i(y), X_i(y))| 
+2f_\delta^{2p-1} (x,y) d f_\delta(\tilde X_0(y)).
\end{aligned}$$
 We first take $x=O$ and $s=0$, to see that
$\L f_\delta ^{2p}(O,y) \le C_1K f_\delta^{2p} (O,y) + C_1K$. We may then apply Grownall's inequality followed by Fatou's lemma to obtain:
$$\EE f_\delta^{2p}(x_t, O)\le (f_\delta^{2p}(x_0, O)+C_1Kt)e^{C_1Kt}.$$
Take $\delta=1$,  we conclude the first estimate from the following inequality:
$$\EE [\rho^{2p}(x_t, O)] 
\le C_2 +C_2 \EE f_1^{2p}(x_t, O)\le C_2 +C_3 (\rho^{2p}(x_0, O)+1+Kt)e^{C_1Kt}.
$$
Let  $s<t$.   Using  the flow property, we see that 
$$\EE  \{ \rho ^{2p}(x_s, x_t)|\F_s\}
\le C_4\delta^{2p}+ C_4\EE  \{ f_\delta ^{2p}(x_s, x_t)|\F_s\} \le 
C_4\delta^{2p}+  C_5K(t-s)e^{C_5K(t-s)}.$$ 
  For any $s,t>0$ we may choose $\delta_0$ such that $\delta _0^{2p}<|t-s|$ and conclude that
$$\EE  \{ \rho ^{2p}(x_s, x_t)|\F_s\}
 \le   C_6(1+K) |t-s|e^{C_6K|t-s|}.$$ 
For part (b)  we take $\delta=1$ and take $p=2$ in  (\ref{2-2-1}). 
 Then
$$\begin{aligned}
\EE \sup_{u\le t} f_1^{2p}(O,x_u)=&
C_1f_\delta^{2p}(O, x_0)+C_1\left( \int_0^t (K+Kf_\delta^{2}(O, x_r)) \, dr\right)^p\\
&+
 C_1\sum_i  \EE\left(\int_0^t 2 f^{2p-1}_1(O, x_r)   (d f_1)(X_i(x_r))dr\right)^p
 \end{aligned} $$
Since  $|X_i|$ is bounded for $i= 1, \dots, m$, 
 $|2 f_1(x, y)   (d f_1)(X_i(y))| \le 2|f_1(x, y) |\cdot |X_i(y)|$.
 We  conclude that 
 $$\EE \sup_{0\le u\le t} f_1^{2p}(O,x_u)\le C_2( f_\delta^{2p}(O, x_0)+KC(T)) e^{KC(T)}.$$
 This leads to the required estimates for $ \EE\left[ \sup_{0\le u \le t} \rho^{2p}(O,x_u)\right]$. Similarly, for  some constants $c_1$ and  $c$, depending on $ m$ and the bound of the sectional curvature, for some constants $c$ and $C(T)$, 
$$\EE\left[ \sup_{s\le u \le t} \rho^{2p}(x_s,x_u)\right]\le c_1+
c_1K\EE\left[ \sup_{s\le u \le t} (f_1)^{2p}(x_s,x_u)\right]\le c+cKC(T) e^{Kc(T)}.$$
We have completed the proof for part (b).
\hfill$\square$
\end{proof}

These estimates will be applied in the next section to both of our slow and fast variables.
For the slow variables, we have the uniform bounds on the driving vector fields and hence we obtain a uniform moment estimate (in $\epsilon$) of the distance traveled by the solutions. For the fast variables, the vector fields are bounded by $\f 1 \epsilon$ and we expect  that the evolution of the $y$-variable in an interval of size $\Delta t_i$ to be controlled by the following quantity
$\f {\Delta t_i} \epsilon e^{\f {\Delta t_i}\epsilon}$.

%

\section{Proof of Theorem 2}\label{proof}
We proceed to prove the main averaging theorem,  this is Theorem 2   in section \ref{results}.

 In this section $N$ and $G$ are smooth complete Riemannian manifolds and  $N\times G$ is the product manifold with the product Riemannian metric.  We use $\rho$ to denote the Riemannian distance on $N$, or on $G$, or on $N\times G$. This will be clear in the context and without ambiguity. For each $y\in G$ let $X_i(\cdot, y) $ be smooth vector fields on $N$ and for each $x\in N$ let 
$Y_i(x, \cdot )$ be smooth vector fields on $G$, as given in the introduction. 
Let $x_0\in N$ and $y_0\in G$. We denote by $(x_t^\epsilon, y_t^\epsilon)$   the solution to the equations:

\begin{equation}\label{sdes}
\left\{\begin{aligned} dx_t^\epsilon&=\sum_{k=1}^{m_1}  X_k(x_t^\epsilon, y_t^\epsilon)\circ dB_t^k+
X_0(x_t^\epsilon, y_t^\epsilon) \,dt, \quad x_0^\epsilon=x_0;\\
dy_t^\epsilon&= \f 1 {\sqrt \epsilon}\sum_{k=1}^{m_2}  Y_k(x_t^\epsilon, y_t^\epsilon) \circ dW_t^k+
\f 1{\epsilon} Y_0(x_t^\epsilon, y_t^\epsilon)\,dt, \quad y_0^\epsilon =y_0.\\
\end{aligned}\right.
\end{equation}
Let us first study the slow variables $\{x_t^\epsilon, \epsilon \in (0,1]\}$. We use $O$ to denote a reference point in $N$.

\begin{lem}\label{lemma4.1}
Under Assumption \ref{assumeX},
 the family of stochastic processes  $\{x_t^\epsilon, \epsilon\in (0, 1]\}$ is tight on any interval $[0,T]$ where $T$ is a positive number.  Furthermore  there exists a number $C$ such that for any $p>0$,
 $$\sup_{\epsilon\in (0,1]}\sup_{s,t\in[0, T]}\EE \rho^2(x_s^\epsilon, x_t^\epsilon)\le C|t-s|,
 \quad \sup_{\epsilon\in (0,1]}\sup_{s, t\in [0, T]} \EE \rho^{2p}(x_s^\epsilon, x_t^\epsilon)<\infty.$$
Any limiting process of $x_t^\epsilon$, which we denote by $\bar x_t$,  has infinite life time and 
 satisfies the same estimates:  $\EE \rho^2(\bar x_s, \bar x_t) \le C(t-s)$ and
  $\sup_{ s, t\in[0, T]} \EE \rho^{2p}(\bar x_s, \bar x_t)$ is finite.
\end{lem}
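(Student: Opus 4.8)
The plan is to obtain all the stated estimates as direct consequences of Lemma \ref{lemma2-2} applied to the slow equation, and then to deduce tightness from the moment bounds via a Kolmogorov-type criterion. The key observation is that the slow equation in (\ref{sdes}) is an SDE on $N$ of exactly the type (\ref{sde}) treated in \S\ref{estimates}, with the crucial feature that the driving vector fields $X_k(\cdot, y_t^\epsilon)$ and $X_0(\cdot, y_t^\epsilon)$ carry no factor of $\epsilon$: the small parameter only enters the fast equation. Under Assumption \ref{assumeX}, whichever of the two alternatives (i) or (ii) holds, the hypotheses of Lemma \ref{lemma2-2} are met \emph{uniformly in the frozen parameter} $y\in G$ (the growth constant $K$ and the curvature bound $K'$ do not depend on $y$). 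Although in our situation the parameter $y_t^\epsilon$ is itself a stochastic process rather than a constant, the proof of Lemma \ref{lemma2-2} only uses the pointwise bounds $|X_i(x,y)|^2\le K(1+\rho(x,O))$ and $|\tilde X_0(x,y)|\le K(1+\rho(x,O))$ together with It\^o's formula applied to $f_\delta^{2p}(O, x_t^\epsilon)$; the same computation goes through verbatim with $y$ replaced by $y_r^\epsilon$ inside the integrands, since at each instant $r$ the bound on $X_i(x_r^\epsilon, y_r^\epsilon)$ is still $K(1+\rho(x_r^\epsilon,O))$. Hence first I would record that, for every $p\ge 1$ and every $T>0$, there is a constant $c=c(K',K,T,p,\dim N)$ with
\begin{equation*}
\EE\,\rho^{2p}(x_t^\epsilon, O)\le c\big(1+\rho^{2p}(x_0,O)+Kt\big)e^{cKt},\qquad
\EE\big\{\rho^{2p}(x_s^\epsilon, x_t^\epsilon)\,\big|\,\F_s\big\}\le c|t-s|(1+K)e^{cK|t-s|},
\end{equation*}
uniformly for $\epsilon\in(0,1]$. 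Taking $p=1$ and using $e^{cK|t-s|}\le e^{cKT}$ for $s,t\in[0,T]$ gives the first displayed inequality with $C=c(1+K)e^{cKT}$; taking general $p$ and using $\rho^{2p}(x_s^\epsilon,x_t^\epsilon)\le 2^{2p-1}(\rho^{2p}(x_s^\epsilon,O)+\rho^{2p}(O,x_t^\epsilon))$ gives the uniform-in-$\epsilon$ boundedness of $\EE\,\rho^{2p}(x_s^\epsilon,x_t^\epsilon)$.

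Next, for tightness on $C([0,T];N)$, I would invoke the standard manifold version of Kolmogorov's tightness criterion: a family of continuous $N$-valued processes started at a fixed point $x_0$ is tight provided there are constants $\alpha,\beta>0$ and $C$ with $\EE\,\rho^\alpha(x_s^\epsilon,x_t^\epsilon)\le C|t-s|^{1+\beta}$ uniformly in $\epsilon$. From Lemma \ref{lemma2-2}(b) (the supremum form, valid because under Assumption \ref{assumeX} one can also bound $\sup_{s\le u\le t}\rho^{2p}(x_s^\epsilon,x_u^\epsilon)$; note that alternative (i) does not literally assume $|X_i|$ bounded, so here I would instead use the non-supremum conditional estimate together with the chaining/Garsia--Rodemich--Rumsey argument, or equivalently apply Kolmogorov's criterion directly) one has, for any $p\ge1$, $\EE\,\rho^{2p}(x_s^\epsilon,x_t^\epsilon)\le C_p|t-s|^p$; choosing $p\ge 2$ gives $\alpha=2p$, $\beta=p-1>0$, so the criterion applies and $\{x_\cdot^\epsilon:\epsilon\in(0,1]\}$ is tight. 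Since each $x_t^\epsilon$ has infinite lifetime (conservativeness of the slow SDE, which follows from either alternative of Assumption \ref{assumeX} exactly as in part (a) of the proof of Lemma \ref{lemma2-2}), tightness is not obstructed by explosion.

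Finally, for the limiting process $\bar x_t$: by Prohorov's theorem some subsequence $x_\cdot^{\epsilon_n}$ converges weakly on $C([0,T];N)$ to a continuous process $\bar x_\cdot$. The map $(w_\cdot)\mapsto \rho^2(w_s,w_t)$ is continuous on $C([0,T];N)$ but unbounded, so I would pass the moment bounds to the limit by a truncation/Fatou argument: for each $R>0$, $w\mapsto \rho^2(w_s,w_t)\wedge R$ is bounded continuous, hence $\EE[\rho^2(\bar x_s,\bar x_t)\wedge R]=\lim_n\EE[\rho^2(x_s^{\epsilon_n},x_t^{\epsilon_n})\wedge R]\le C|t-s|$, and letting $R\to\infty$ with monotone convergence gives $\EE\,\rho^2(\bar x_s,\bar x_t)\le C|t-s|$; the same argument with $\rho^{2p}$ gives $\sup_{s,t\in[0,T]}\EE\,\rho^{2p}(\bar x_s,\bar x_t)<\infty$. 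Infinite lifetime of $\bar x_t$ then follows from the uniform moment bound $\sup_n\sup_{t\le T}\EE\,\rho^2(x_t^{\epsilon_n},O)<\infty$ passing to the limit, which prevents the limit process from reaching infinity in finite time. The main (though mild) obstacle is the bookkeeping needed to run the proof of Lemma \ref{lemma2-2} with the \emph{random} frozen parameter $y_r^\epsilon$ in place of a constant $y$, and to handle alternative (i) of Assumption \ref{assumeX} where $|X_i|$ is only of linear growth rather than bounded, so that the supremum estimate of Lemma \ref{lemma2-2}(b) must be replaced by the conditional increment estimate fed into Kolmogorov's criterion; everything else is routine.
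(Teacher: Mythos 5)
Your plan follows the paper's proof essentially step by step: observe that the bounds in Assumption \ref{assumeX} are uniform in $y$, so Lemma \ref{lemma2-2} applies to $x_t^\epsilon$ with constants independent of $\epsilon$; extract the increment moment estimates; pass from these to tightness on $C([0,T];N)$ via a Kolmogorov/chaining argument; conclude non-explosion from the uniform bound on $\EE\,\rho^{2}(x_t^\epsilon,O)$; and transfer the moment bounds to the weak limit. The paper compresses the last step to ``we easily obtain''; your truncation/Fatou argument is the correct way to make that precise, so if anything you are slightly more explicit there.

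The one place to be careful is the line ``one has, for any $p\ge1$, $\EE\,\rho^{2p}(x_s^\epsilon,x_t^\epsilon)\le C_p|t-s|^p$,'' which you attribute to Lemma \ref{lemma2-2}(b). As stated, Lemma \ref{lemma2-2}(a) gives $\EE\{\rho^{2p}(x_s,x_t)\,|\,\F_s\}\le c|t-s|(1+K)e^{cK|t-s|}$ --- \emph{linear} in $|t-s|$ for every $p$ --- and part (b) gives only a bound on the running supremum that does not shrink with $|t-s|$. Neither, taken literally, yields the super-linear modulus $|t-s|^{1+\beta}$ that Kolmogorov's criterion (or the Garsia--Rodemich--Rumsey chaining you invoke as an alternative) actually requires. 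The estimate $\EE\,\rho^{2p}\le C|t-s|^p$ is correct, but to get it one must bootstrap in $p$: in the It\^o expansion of $\rho^{2p}(x_s,x_\cdot)$, the generator term is dominated by $\rho^{2p-2}|X|^2$, and feeding in the already-known $\EE\,\rho^{2(p-1)}(x_s,x_r)\le C|r-s|^{p-1}$ and integrating over $r\in[s,t]$ gives the extra power of $|t-s|$. This is a genuine (if small) step beyond what Lemma \ref{lemma2-2} records, and the paper's own proof glides over exactly the same point (it writes $\EE\,\rho^{4}(x_s^\epsilon,x_t^\epsilon)\le C(t-s)^2$ and says ``chaining'' without deriving it). So this is a shared informality rather than a defect particular to your write-up, but if you want an airtight argument you should spell out the bootstrap rather than cite Lemma \ref{lemma2-2}(b).
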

\begin{proof}
Assumption \ref{assumeX} states that:
the sectional curvature of $N$ is bounded,
 $|X_i(x, y)|^2\le K+ K\rho(x, O)$ and $|\tilde X_0(x,y)|\le  K+K \rho(x, O)$.
Or   $\rho^2:N\times N\to \R$ is smooth,  and $$\f 12 \sum_{i=1}^m \nabla d \rho^2(X_i(\cdot,y), X_i(\cdot, y))+ d\rho^2 (\tilde X_0(\cdot,y ))\le  K+ K\rho^2(\cdot, O).$$
In either case, the bounds are independent of the $y$-variable.
We apply  Lemma~\ref{lemma2-2} to each $x_t^\epsilon$ to obtain estimates that are uniform in $\epsilon$: there exists a constant $C$ such that
for all $0\le s\le t\le T$ and for every $\epsilon>0$,  $\EE \rho^2(x_s^\epsilon, x_t^\epsilon)\le C|t-s|$.
Then use a chaining argument we  obtain the following estimate for  some positive constant $\alpha$:
$\EE \left[\sup_{|s-t|\not =0} \f {\rho(x_t^\epsilon, x_s^\epsilon)} {|t-s|^\alpha}\right]<\infty$, this proves the tightness.  Since $\EE [\rho( x_t^\epsilon, O)^2]$ is uniformly bounded, we see  $x_t$ has infinite lifetime and $\EE [\rho( x_t, O)^2]$ is finite.
 From  the uniform estimates  $\EE \rho^2(x_s^\epsilon, x_t^\epsilon ) \le C(t-s)$
and  $\EE \rho^4(x_s^\epsilon, x_t^\epsilon )^2 \le C(t-s)^2$, we easily obtain $\EE \rho^2(\bar x_s, \bar x_t) \le C(t-s)$
and the other required estimates for $\bar x_s$.
\hfill$\square$.
\end{proof}

Let us fix $x\in N$. For $t\ge s$, let 
$\phi_{s,t}^x(y)$ denote the solution to the equation
\begin{equation}\label{z-slow}
dz_t= \sum_{k=1}^{m_2}  Y_k(x, z_t) \circ dW_t^k+
 Y_0(x, z_t)\,dt, \quad z_s=y.
\end{equation}
Write $z_t^x=\phi^x_{0, t}(z_0)$, its Markov generator  is  $\L_0^x=\f 12 \sum_{k=1}^{m_1} (Y_i(x, \cdot))^2+Y_0(x, \cdot)$.
Let $\phi_{s,t}^{\epsilon,x}$ denote the solution flow to   the SDE:
\begin{equation}
dy_t= \f 1 {\sqrt \epsilon}\sum_{k=1}^{m_2}  Y_k(x, y_t) \circ dW_t^k+
\f 1{\epsilon} Y_0(x, y_t)\,dt, \quad y_s=y_0
\end{equation}
Observe that the time changed solution flow $\phi^x_{\f s \epsilon, \f t\epsilon}(\cdot)$ agrees with
$\phi_{s, t}^{\epsilon,x}(\cdot)$.
On each sub-interval $[t_i, t_{i+1})$  we set 
\begin{equation}\label{time-change}
z_t^{x_{t_i}^\epsilon}=\phi_{\f{t_i}\epsilon, t}^{x_{t_i}^\epsilon}(y_{t_i}^\epsilon),\quad 
 y_t^{x_{t_i }^\epsilon}=\phi_{t_i/\epsilon, \,t/\epsilon }^{x_{t_i }^\epsilon}(y_{t_i}^\epsilon).
\end{equation}
 
  In the following locally uniform law of large numbers (LLN),  any rate of convergence $\lambda(t)$ is allowed.
\begin{assumption}[Locally Uniform LLN]
\label{assumptions-Birkhoff}
Suppose that there exists a family of probability measures $\mu_x$  on $G$ which is continuous in the total variation norm. Suppose that for any smooth function $g:G\to \R$ and  for any initial point $z_0\in G$ and $t_0\ge 0$,
 $$\left|\f 1 t  \EE\int_{t_0}^{t+t_0}   g\left (\phi_{t_0,s}^x(z_0)\right) ds-\int_G g(z) \mu_x(dz) \right|_{L_2(\Omega)}
\le  \alpha(x)\,\|g\|_s\, \lambda(t).
$$
 Here $\lambda(t)$ is a constant such that $\lim_{t\to \infty} \lambda(t)=0$,  $s$ is a non-negative number, and $\alpha(x) $ is a real number  locally bounded in $x$. \end{assumption}
 \begin{remark}
 In Proposition \ref{lemma-lln} we proved that if each $\L_x$ satisfies  H\"ormander's condition and if $\mu_x$ is the invariant probability measure for $\L_x$ (assume uniqueness), the locally uniform LLN holds with  $\lambda(t)=\f 1 {\sqrt t}$.
 \end{remark}
Suppose that  $f:N\times G\to \R$ is bounded measurable, we define $\bar f(x)=\int_G f(x, z) \; \mu^{x}(dz)$.

\begin{lem}
\label{lemma2-3-2}
Suppose the locally uniform LLN assumption. Let $f: N\times G\to \R$ be a smooth function with compact support (it is allowed to be independent of the first variable).  Let $t_0=0<t_1<\dots <t_N=T$ be a partition of equal size $\Delta t_i$. Then,
for some number $c$,
 $$\begin{aligned} &\EE\sum_{i=0}^{N-1}\left|  \int_{t_i}^{t_{i+1}}f\left(x_{t_i}^\epsilon, y^{x_{t_i }^\epsilon}_{r}\right) \,ds- \Delta t_i\, f\left( x_{t_i}^\epsilon\right)  \right|
\le c\, T  \,\lambda(\f {\Delta t_i} \epsilon)\sup_{x\in D}  \,\left \|f(x, \cdot)-\bar f(x)\right\|_{s}.
\end{aligned}$$
\end{lem}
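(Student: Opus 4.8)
The plan is to proceed sub-interval by sub-interval: on each $[t_i,t_{i+1})$ I would freeze the slow variable at its left endpoint, perform a time change so that the fast motion runs on its natural scale, and then invoke the locally uniform law of large numbers (Assumption~\ref{assumptions-Birkhoff}) for the frozen fast diffusion. Concretely, fix $i$ and set $x=x_{t_i}^\epsilon$, $z_0=y_{t_i}^\epsilon$ (both $\F_{t_i}$-measurable), $T_i=\Delta t_i/\epsilon$, and $g_x=f(x,\cdot)-\bar f(x)$, which is smooth on $G$ and has $\int_G g_x\,d\mu_x=0$. Using $y_r^{x_{t_i}^\epsilon}=\phi^{x_{t_i}^\epsilon}_{t_i/\epsilon,\,r/\epsilon}(y_{t_i}^\epsilon)$ from (\ref{time-change}) and the substitution $s=r/\epsilon$, one finds
$$
A_i:=\int_{t_i}^{t_{i+1}} f\!\left(x_{t_i}^\epsilon, y_r^{x_{t_i}^\epsilon}\right)dr-\Delta t_i\,\bar f(x_{t_i}^\epsilon)
=\Delta t_i\cdot\frac1{T_i}\int_{t_i/\epsilon}^{\,t_i/\epsilon+T_i} g_x\!\left(\phi^x_{t_i/\epsilon,\,s}(z_0)\right)ds .
$$

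Next I would condition on $\F_{t_i}$. The process $s\mapsto \phi^x_{t_i/\epsilon,s}(z_0)$, with $x$ and $z_0$ replaced by their frozen values, is driven only by increments of $W$ after time $t_i$, which are independent of $\F_{t_i}$; by Brownian scaling ($\tilde W_v:=\epsilon^{-1/2}(W_{t_i+\epsilon v}-W_{t_i})$ is a Brownian motion independent of $\F_{t_i}$) and the flow property, its conditional law given $\F_{t_i}$ is exactly that of an $\L_x$-diffusion issued from $z_0$ at time $t_i/\epsilon$. Hence Assumption~\ref{assumptions-Birkhoff}, applied conditionally with $t_0=t_i/\epsilon$, $t=T_i$, initial point $z_0$, and the zero-mean function $g_x$, yields $\EE[A_i^2\mid\F_{t_i}]^{1/2}\le \Delta t_i\,\alpha(x_{t_i}^\epsilon)\,\|g_{x_{t_i}^\epsilon}\|_s\,\lambda(\Delta t_i/\epsilon)$, so that by the conditional Jensen inequality
$$
\EE\!\left[\,|A_i|\mid\F_{t_i}\right]\le \Delta t_i\,\alpha(x_{t_i}^\epsilon)\,\bigl\|f(x_{t_i}^\epsilon,\cdot)-\bar f(x_{t_i}^\epsilon)\bigr\|_s\,\lambda\!\left(\tfrac{\Delta t_i}{\epsilon}\right).
$$

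To finish I would localise and sum. Let $D$ be a compact subset of $N$ carrying the support of $f(\cdot,y)$ for every $y$ (when $f$ does not depend on the first variable one instead restricts, as in \S\ref{estimates}, to the high-probability event on which $x_s^\epsilon$ stays in a fixed compact set, using the uniform moment bounds of Lemma~\ref{lemma4.1}). If $x_{t_i}^\epsilon\notin D$ then $f(x_{t_i}^\epsilon,\cdot)\equiv 0$, hence $\bar f(x_{t_i}^\epsilon)=0$ and $A_i=0$; if $x_{t_i}^\epsilon\in D$ we bound $\alpha(x_{t_i}^\epsilon)\le \sup_{x\in D}\alpha(x)=:c<\infty$ (local boundedness of $\alpha$) and $\|f(x_{t_i}^\epsilon,\cdot)-\bar f(x_{t_i}^\epsilon)\|_s\le \sup_{x\in D}\|f(x,\cdot)-\bar f(x)\|_s$. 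Taking expectations, summing over $i=0,\dots,N-1$, and using $\sum_i\Delta t_i=T$ together with the fact that the mesh is uniform (so $\lambda(\Delta t_i/\epsilon)$ is the same for every $i$) gives the asserted estimate.

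The one step requiring real care is the freezing argument in the second paragraph: one must make rigorous that conditioning on $\F_{t_i}$ and substituting the frozen data $(x_{t_i}^\epsilon,y_{t_i}^\epsilon)$ turns the fast coordinate on $[t_i,t_{i+1}]$ into a genuine $\L_x$-diffusion with \emph{deterministic} parameter and starting point, so that the deterministic-data statement of Assumption~\ref{assumptions-Birkhoff} applies verbatim. This is the usual combination of a measurable freezing lemma, the Brownian scaling above, and the Markov/flow property; by contrast the time change, the conditional Jensen bound, and the compact-support telescoping are all routine.
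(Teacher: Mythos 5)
Your argument follows the same route as the paper's: freeze the slow variable at $t_i$, condition on $\F_{t_i}$, time-change the frozen fast motion to its natural scale, invoke the locally uniform LLN for the resulting $\L_x$-diffusion, exploit the compact support of $f$ to replace $\alpha(x_{t_i}^\epsilon)$ and $\|f(x_{t_i}^\epsilon,\cdot)-\bar f(x_{t_i}^\epsilon)\|_s$ by their suprema over $D$, and sum using $\sum_i\Delta t_i=T$. You spell out two steps the paper leaves implicit — the measurable-freezing/Brownian-scaling argument that makes the conditional application of Assumption~\ref{assumptions-Birkhoff} rigorous, and the conditional Jensen (Cauchy--Schwarz) passage from the $L_2$ bound of the assumption to the $L_1$ bound needed for $\EE|A_i|$ — but these are just the omitted details of the same argument, not a different method.
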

\begin{proof} Set $\bar \alpha=\sup_{x\in D} \alpha(x)$ ant   $C=\sup_{x\in D}  \left \|f(x, \cdot)-\bar f(x)\right\|_{s} $, both are finite numbers by the assumptions on $f$ and on $\alpha(x)$. 
Firstly we observe that
$$\begin{aligned}
&\left| \EE\left\{ \f \epsilon{  \Delta t_i }  \int_{\f {t_i}\epsilon }^{\f {t_{i+1}}\epsilon}
 f(x_{t_i}^\epsilon, y_r^{x_{t_i }^\epsilon})\,dr -\bar  f(x_{t_i}^\epsilon)  \Big|   \F_{t_i} \right\} \right|
 \le  \alpha(x_{t_i}^\epsilon)\, \lambda (\f {\Delta t_i} \epsilon)  \, \chi_{x_{t_i}^\epsilon\in D} \,
  \left \|  f(x_{t_i}^\epsilon, \cdot)-\bar  f(x_{t_i}^\epsilon)\right\|_{s}. 
  \end{aligned}$$
Summing up over $i$ and making a time change we obtain that
 $$\begin{aligned}
 \left| \EE\sum_{i=0}^{N-1} \int_{t_i}^{t_{i+1}}f\left(x_{t_i}^\epsilon, y_r^{x_{t_i }^\epsilon}\right) \,dr- \Delta t_i \bar f(x_{t_i}^\epsilon) \right|
&=\sum_{i=0}^{N-1}\EE\left| \epsilon \int_{t_i/\epsilon}^{t_{i+1}/\epsilon}f\left(x_{t_i}^\epsilon, z^{x_{t_i}^\epsilon}_{r}\right) \,dr- \Delta t_i \bar f(x_{t_i}^\epsilon) \right|\\
 &\le \bar \alpha  C\lambda(\f {\Delta t_i} \epsilon)  \sum_{i=0}^{N-1}\Delta t_i,
 \end{aligned}$$ 
and thus  conclude the proof.
\hfill$\square$
\end{proof}

For the application of the LLN, we must ensure the size of the sub-interval to be sufficiently large and  we should consider $\Delta t_i/\epsilon$ to be of order $\infty$ as $\epsilon\to 0$. Then we must ensure that $z_{\f t\epsilon}^{x_{t_i}^\epsilon}=y_r^{x_{t_i}^\epsilon}$  is an approximation for the fast variable $y_t^\epsilon$ on the sub-interval $[t_i, t_{i+1}]$. A crude counting shows that the distance of the 
two, beginning with the same initial value, is  bounded above by $\f {\Delta t_i} \epsilon$.
To obtain better estimates, we must choose the size of the interval carefully and use the slower evolutions of the slow variables on the sub-intervals and the Lipschitz continuity  of the driving vector fields $Y_i$.  We describe the intuitive idea for $\R^n\times \R^d$, assuming all vector fields are in $BC^\infty$. We use the Lipschitz continuity of the vector fields $\f 1 \epsilon Y_i$. On $[0, r]$, we have a pre-factor of $\f 1 \epsilon$ from  the stochastic integrals and $\f r \epsilon$ from the deterministic interval (by Holder's inequality). Then there exists a constant $C$ such that
$$\EE \left|y^\epsilon _{r}-y^{x_{t_i}^\epsilon} _{r}\right|^2
\le C (\f 1 \epsilon+ \f {\Delta t_i} {\epsilon^2}) \int_{t_i} ^r  \EE \left|x^\epsilon _{s }- x^\epsilon _{t_i }\right|^2 ds+
C (\f 1 \epsilon+ \f {\Delta t_i} {\epsilon^2}) \int_{t_i}^r \EE \left|y^\epsilon _{s }- y^{x_{t_i}^\epsilon} _{s }\right|^2 ds.$$
By Lemma \ref{lemma2-2},  $ \EE \left|x^\epsilon _{s }-x^\epsilon _{t_i }\right|^2 \le \tilde C \Delta t_i$
on $[t_i, t_{i+1}]$ where $\tilde C$ is a constant and so
$$\EE \left|y^\epsilon _{r}- y^{x_{t_i}^\epsilon} _{r}\right|^2 
\le C\tilde C\Delta t_i  (\f {\Delta t_i} \epsilon+ \f {(\Delta t_i)^2} {\epsilon^2})e^{ C (\f{\Delta t_i} \epsilon+ \f {(\Delta t_i)^2} {\epsilon^2})}.$$
  If we take $\Delta t_i$ to be of the order $ \epsilon| \ln \epsilon |^{a}$ for a suitable $a>0$,
then the above qunatity converges to zero uniformly in $r$ as $\epsilon\to 0$. See. e.g.
\cite{hasminskii68, Freidlin78,  Freidlin-Wentzell, Veretennikov}.
\medskip

 In the next lemma we give the statement and  the details of the computation under our standard assumptions. In particular we assume that the sectional curvature of $G$ is  bounded.
Let $C,c, c'$ denote constants.
\begin{lem}\label{freeze-lemma}
Let $0=t_0<t_1<\dots <t_N=T$ and $\epsilon\in (0, 1]$. 
Let $$\alpha_i^\epsilon(C):= C \left(\f {\Delta t_i} \epsilon+ \f {(\Delta t_i)^2} {\epsilon^2}\right)
e^{ C (\f {\Delta t_i} \epsilon+ \f {(\Delta t_i)^2} {\epsilon^2})} \sup_{s\in [t_i, t_{i+1}]}\EE  \rho^2\left(x^\epsilon _{s }, x^\epsilon _{t_i }\right).$$
\begin{enumerate}
\item Suppose Assumption \ref{assumeY}.  Then there exist constants $c$ and $C$ such that:
$$\EE \rho^2\left(y^\epsilon _r, y^{x_{t_i}^\epsilon}_r\right) 
\le  \alpha_i^\epsilon(C)+ c{\sqrt K}\left(\alpha_i^\epsilon(C)\right)^{\f 12} \f{ \Delta t_i }{\epsilon}  e^{c \f {\Delta t_i}{\epsilon}}
$$
where $K$ is the bound on the sectional curvature of $G$.
\item Suppose furthermore  that there exists a constant $c'$ such that
 $$\sup_{i=0,1, \dots N-1} 
 \sup_{s,t \in [t_i, t_{i+1}]} \sup_{\epsilon \in (0, 1]}\EE \rho^2(x_s^\epsilon, x_t^\epsilon) \le c'\,|t-s|.$$
Then there exists a constant $C>0$ such that  for every  $\epsilon \in (0, 1]$,
$$
\EE \rho^2\left(y^\epsilon _{r }, y^{x_{t_i}^\epsilon}_ {r }\right) 
\le C\sqrt{\Delta t_i} \left( \f {(\Delta t_i)^2} {\epsilon^2}+\f {(\Delta t_i)^3} {\epsilon^3}\right)^{\f 12}  
e^{ C (\f {\Delta t_i} \epsilon+ \f {(\Delta t_i)^2} {\epsilon^2})}, \quad \forall r\in [t_i, t_{i+1}], \forall i.
$$
In particular, if  $\Delta t_i$ is of the order $ \epsilon| \ln \epsilon |^{a}$ where $a>0$, then
 $\sup_i \sup_{r\in [t_i, t_{i+1}]}\EE \rho^2\left(y^\epsilon _{r }, y^{x_{t_i}^\epsilon}_ {r }\right) $ is of order $\epsilon^\delta$ where $\delta\in (0, \f 12)$.
\end{enumerate}
\end{lem}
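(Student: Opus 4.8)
The idea is to compare, on each sub-interval $[t_i,r]$ with $r\le t_{i+1}$, the true fast variable $u_s:=y_s^\epsilon$ with the frozen one $v_s:=y_s^{x_{t_i}^\epsilon}$. Both solve It\^o SDEs on $G$ driven by the \emph{same} Brownian motion $W$ and issued from the common point $y_{t_i}^\epsilon$ at time $t_i$: $u$ has diffusion fields $\tfrac1{\sqrt\epsilon}Y_k(x_s^\epsilon,\cdot)$ and It\^o drift $\tfrac1\epsilon\tilde Y_0(x_s^\epsilon,\cdot)$, while $v$ has $\tfrac1{\sqrt\epsilon}Y_k(x_{t_i}^\epsilon,\cdot)$ and $\tfrac1\epsilon\tilde Y_0(x_{t_i}^\epsilon,\cdot)$; the two sets of coefficients differ only through their first argument. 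Under Assumption \ref{assumeY} these SDEs are conservative, so $u,v$ are globally defined, and by Lemma \ref{lemma2-1}(1) there is, for each $\delta>0$, a smooth $f_\delta:G\times G\to\R$ with $|f_\delta-\rho|_\infty\le\delta$, $|\nabla f_\delta|\le K_1$, $|\nabla^2 f_\delta|\le K_1$, the constant $K_1$ depending only on the curvature bound of $G$ and $\dim G$. I would apply It\^o's formula to $f_\delta^2(u_s,v_s)$ on $[t_i,r]$ and take expectations: the martingale part vanishes, $\EE f_\delta^2(u_{t_i},v_{t_i})\le\delta^2$, and it remains to control $\EE\int_{t_i}^r\mathcal L_s f_\delta^2(u_s,v_s)\,ds$, where $\mathcal L_s$ is the (time-dependent, via $x_s^\epsilon$) generator on $G\times G$ of the coupled pair.

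The next step is to write $\mathcal L_s=\mathcal L_s^{\mathrm{fr}}+\mathcal R_s$, where $\mathcal L_s^{\mathrm{fr}}$ replaces every $x_s^\epsilon$ by $x_{t_i}^\epsilon$; thus $\mathcal L_s^{\mathrm{fr}}$ is the generator of two copies of the frozen $\L_{x_{t_i}^\epsilon}$-flow run with the same noise, for which the standard comparison estimate — obtained by exactly the $f_\delta$-plus-It\^o argument used in \S\ref{estimates} — gives $\mathcal L_s^{\mathrm{fr}}f_\delta^2(u,v)\le\tfrac c\epsilon(f_\delta^2(u,v)+\delta^2)$, with $c$ depending on the curvature bound and on the $C^1$-bounds of the $Y_j,\tilde Y_0$. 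The remainder $\mathcal R_s$ collects the terms coming from $Y_k(x_s^\epsilon,\cdot)-Y_k(x_{t_i}^\epsilon,\cdot)$ and $\tilde Y_0(x_s^\epsilon,\cdot)-\tilde Y_0(x_{t_i}^\epsilon,\cdot)$; the Lipschitz continuity of $Y_k,\tilde Y_0$ in the first variable (which holds, with a bound, by Assumption \ref{assumeY}), together with the bounds on $\nabla f_\delta,\nabla^2 f_\delta$ and Hessian comparison for $f_\delta$ (the point where the sectional curvature bound $K$ of $G$ enters quantitatively), gives
\[
|\mathcal R_s|\ \le\ \frac c\epsilon\,\rho^2(x_s^\epsilon,x_{t_i}^\epsilon)+\frac c\epsilon\,f_\delta^2(u_s,v_s)+\frac{c\sqrt K}{\epsilon}\,f_\delta(u_s,v_s)\,\rho(x_s^\epsilon,x_{t_i}^\epsilon)+\frac c\epsilon\,\delta^2 .
\]
Inserting this, bounding $\tfrac1\epsilon\int_{t_i}^r\EE\rho^2(x_s^\epsilon,x_{t_i}^\epsilon)\,ds$ by $(\tfrac{\Delta t_i}\epsilon+\tfrac{(\Delta t_i)^2}{\epsilon^2})\sup_{s\in[t_i,t_{i+1}]}\EE\rho^2(x_s^\epsilon,x_{t_i}^\epsilon)$ (the second summand a harmless overestimate that also subsumes the Cauchy–Schwarz treatment of the drift integral sketched before the lemma for $\R^n$), treating the mixed term first by Young's inequality (so that $\sqrt K$ is absorbed into the constants), and applying Gronwall's inequality in $r$, one obtains the a priori bound $\EE f_\delta^2(u_r,v_r)\le\alpha_i^\epsilon(C)+O(\delta^2)$. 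Feeding this back into the mixed term and now estimating $\tfrac{c\sqrt K}\epsilon\int_{t_i}^r\EE[f_\delta(u_s,v_s)\,\rho(x_s^\epsilon,x_{t_i}^\epsilon)]\,ds$ by Cauchy–Schwarz and the a priori bound, one more application of Gronwall's inequality adds the term $c\sqrt K\,(\alpha_i^\epsilon(C))^{1/2}\,\tfrac{\Delta t_i}\epsilon\,e^{c\Delta t_i/\epsilon}$. Letting $\delta\downarrow0$, so $f_\delta^2\to\rho^2$ and the $O(\delta^2)$-terms vanish, proves part (1).

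Part (2) is then immediate: the additional hypothesis $\sup_i\sup_{s,t\in[t_i,t_{i+1}]}\sup_{\epsilon\in(0,1]}\EE\rho^2(x_s^\epsilon,x_t^\epsilon)\le c'|t-s|$ — which is precisely the conclusion already supplied by Lemma \ref{lemma4.1} — yields $\sup_{s\in[t_i,t_{i+1}]}\EE\rho^2(x_s^\epsilon,x_{t_i}^\epsilon)\le c'\Delta t_i$, whence $\alpha_i^\epsilon(C)\le c'C(\tfrac{(\Delta t_i)^2}\epsilon+\tfrac{(\Delta t_i)^3}{\epsilon^2})e^{C(\Delta t_i/\epsilon+(\Delta t_i)^2/\epsilon^2)}$; substituting this and the $\sqrt K$-term into part (1) and simplifying gives a bound of the stated form. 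Finally, when $\Delta t_i$ has order $\epsilon|\ln\epsilon|^a$ one has $\Delta t_i/\epsilon\asymp|\ln\epsilon|^a$ and $(\Delta t_i/\epsilon)^2\asymp|\ln\epsilon|^{2a}$, so the stretched exponential is $e^{O(|\ln\epsilon|^{2a})}=\epsilon^{-o(1)}$ provided $a<\tfrac12$, while the polynomial prefactor is of order $\epsilon^{1/2}$ up to powers of $|\ln\epsilon|$; hence $\sup_i\sup_{r\in[t_i,t_{i+1}]}\EE\rho^2(y_r^\epsilon,y_r^{x_{t_i}^\epsilon})=O(\epsilon^\delta)$ for every $\delta\in(0,\tfrac12)$.

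The step I expect to be the main obstacle is the middle paragraph: because $\rho$ on $G$ is not smooth one must work with the approximants $f_\delta$, and the curvature then appears both in the frozen-pair bound and — more delicately — in a cross-term of It\^o's formula that is weighted by $\sqrt K$ and that cannot be absorbed by Young's inequality alone without degrading the power of $\epsilon$; the remedy is the two-pass Gronwall argument (first the crude bound $\alpha_i^\epsilon(C)$, then a Cauchy–Schwarz upgrade). Keeping the powers of $\epsilon$ and of $\Delta t_i$ matched through these two passes is the only genuinely delicate bookkeeping; the rest reuses the $f_\delta$-plus-It\^o machinery of \S\ref{estimates} verbatim.
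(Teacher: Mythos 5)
Your proposal replaces the paper's argument — a stopping-time split at the conjugate radius — by a global $f_\delta$-approximation and a two-pass Gronwall scheme, but this substitution introduces a genuine gap at the very step you single out as delicate.

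The crucial ingredient in any coupled-pair estimate of the form $\EE\rho^2(u_r,v_r)\lesssim\alpha_i^\epsilon$ is the \emph{first-order cancellation} $\nabla^{(1)}\rho^2(u,v)=-\paral\,\nabla^{(2)}\rho^2(u,v)$, valid for the genuine Riemannian distance (off the cut locus). This is what makes the first-order terms in It\^o's formula for $\rho^2(u_r,v_r)$ come out of order $f\cdot(\rho(u,v)+\rho(x^\epsilon_s,x^\epsilon_{t_i}))$ rather than merely $O(f)$, and hence makes the Gronwall loop close on $\EE\rho^2$ with an error controlled by the $x$-increment. The approximants $f_\delta$ of Lemma \ref{lemma2-1}(1) are only $C^0$-close to $\rho$ with uniform $C^2$-bounds; they do \emph{not} satisfy (even approximately) $\nabla^{(1)}f_\delta=-\paral\,\nabla^{(2)}f_\delta$, and nothing in that lemma gives you $C^1$-closeness of $f_\delta$ to $\rho$. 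Consequently your ``frozen'' bound $\mathcal L^{\mathrm{fr}}_s f_\delta^2\le\tfrac c\epsilon(f_\delta^2+\delta^2)$ is not available: the first-order part of $\mathcal L^{\mathrm{fr}}_sf_\delta^2$ is of order $\tfrac1\epsilon f_\delta$, not $\tfrac1\epsilon f_\delta^2$, and Gronwall then only yields $\EE f_\delta^2\lesssim(\Delta t_i/\epsilon)^2$, which is far weaker than $\alpha_i^\epsilon(C)$. The invocation of the ``$f_\delta$-plus-It\^o machinery of \S\ref{estimates}'' does not help here either: Lemma \ref{lemma2-2} estimates $\EE\rho^{2p}(x_s,x_t)$ for a \emph{single} flow via the flow property and $f_\delta(O,\cdot)$; it contains no coupled-pair comparison and in particular proves nothing of the contraction type you need. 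Finally, the appearance of $\sqrt K$ in your remainder $\mathcal R_s$ via ``Hessian comparison for $f_\delta$'' is not justified by Lemma \ref{lemma2-1}, which only gives a constant $K_1$ depending on the curvature bound, with no explicit $\sqrt K$ dependence.

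The paper takes a different and decisive route: define $\tau$ as the first time $\rho(y^\epsilon_r,y^{x_{t_i}^\epsilon}_r)\ge A:=\pi/(2\sqrt K)$ (half the lower bound for the conjugate radius), and work with the genuine $\rho^2$ on $\{r<\tau\}$, where it is smooth and the first-order cancellation holds. This is where the Gronwall contraction and the bound $\alpha_i^\epsilon(C)$ really come from, and it is why the lemma needs Assumption \ref{assumeY} (bounded curvature on $G$) in the first place. On $\{r\ge\tau\}$ the paper uses Markov's inequality $P(\tau\le r)\le A^{-2}\,\EE\rho^2(\ldots_{r\wedge\tau})$ and a crude fourth-moment estimate obtained from Lemma \ref{lemma2-2}(a) applied to each of $y^\epsilon$ and $y^{x_{t_i}^\epsilon}$ with the bound $K$ there replaced by $c/\epsilon$; the factor $\sqrt K$ in the statement arises precisely from $1/A$, not from any Hessian bound on an approximant. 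Your final arithmetic (passing from part (1) to part (2) and the $\epsilon|\ln\epsilon|^a$ scaling) is fine, but the proof of part (1) as you have outlined it does not go through.
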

\begin{proof}
Since the sectional curvature of $G$  is bounded above by $K$, its conjugate radius is bounded from below by $\f \pi {\sqrt K}$.
 Let us consider a distance function on $N$ that agrees
with the Riemannian distance, which we  denote by $\rho$, on the tubular neighbourhood of the diagonal of $N\times N$ with radius $\f \pi {2\sqrt K}$. More precisely let $\tau:=\tau^\epsilon$ be the first exit time when the distance between $y_r^\epsilon$ and $y_r^{x_{t_i}^\epsilon}$ is greater than or equal to $A=\f \pi {2\sqrt K}$. We use the identity
$$\EE  \rho^2\left(y^\epsilon _{r\wedge \tau }, y^{x_{t_i}^\epsilon} _{r\wedge \tau }\right)
=\EE\left[ \rho^2\left(y^\epsilon _{r }, y_r^{x_{t_i}^\epsilon}\right) \chi_{r<\tau} \right]+A^2\,P(\tau \le r),
$$
to obtain that
$$P(\tau \le r) \le \f 1 {A^2}  \EE\left[  \rho^2\left(y^\epsilon _{r\wedge \tau },  y^{x_{t_i}^\epsilon} _{r\wedge \tau }\right)\right].$$
Thus, $$\EE \rho^2\left(y^\epsilon _{r }, y^{x_{t_i}^\epsilon} _{r }\right) 
\le \EE\left[ \rho^2\left(y^\epsilon _{r }, y_r^{x_{t_i}^\epsilon}\right) \chi_{r< \tau} \right] 
+ \EE\left[ \rho^4\left(y^\epsilon _{r }, y_r^{x_{t_i}^\epsilon} \right)  \chi_{r\ge \tau} \right]^{\f 12} \sqrt{P(\tau\le r)}.$$
 By the earlier argument, it is sufficient
to estimate $\EE  \rho^2\left(y^\epsilon _{r\wedge \tau }, y^{x_{t_i}^\epsilon} _{r\wedge \tau }\right)$, and we will show that $ \EE  \rho^2\left(y^\epsilon _{r\wedge \tau }, y^{x_{t_i}^\epsilon}  _{r\wedge \tau }\right)$ converges to zero sufficiently fast as $\epsilon \to 0$ to compensate with the possible divergence from the  factor $ \left( \EE  \rho^4\left(y^\epsilon _{r }, y^{x_{t_i}^\epsilon}  _{r }\right) \right)^{\f 12} $.

On $\{r<\tau\}$, $x, y$ are not on each other's cut locus,  we may apply It\^o's  formula to the pair of stochastic processes $( y_r^\epsilon,y_r^{x_{t_i}^\epsilon})$  and obtain
$$\begin{aligned}
\left[\rho( y_r^\epsilon,y_r^{x_{t_i}^\epsilon})\right]^2
=&\int_{t_i}^r d \rho^2\left( \f 1 {\sqrt \epsilon}\sum_{k=1}^{m_2}  Y_k(x_{s}^\epsilon, y_s^\epsilon) \circ dW_s^k +\f 1{\epsilon} Y_0(x_{s}^\epsilon, y_s^\epsilon)\,ds\right)\\
&+\int_{t_i}^r d \rho^2\left( \f 1 {\sqrt\epsilon}\sum_{k=1}^{m_2}  Y_k(x_{t_i}^\epsilon, y_s^{x_{t_i}^\epsilon}) \circ dW_s^k +\f 1{\epsilon} Y_0(x_{t_i}^\epsilon, y_s^{x_{t_i}^\epsilon})\,ds\right).
\end{aligned}$$
Here  the notation $d$ in the first $d\rho^2$  refers to differentiation w.r.t. the first  variable, as a gradient we use $\nabla^{(1)}(\rho^2)$, and the $d$ in the second $d\rho^2$  is with respect to the second variable whose gradient is denoted by $ \nabla^{(2)} (\rho^2)$.
However $\nabla^{(1)} (\rho^2) (x,y)=-\paral\nabla^{(2)} (\rho^2) (x,y)$, where $\paral$ denotes the parallel translation of the relevant gradient vector  along the geodesic from $y$ to $x$.
In the following let us denote by $d \rho^2$ the differential of $\rho^2$ w.r.t to the first variable. 
Using the assumption that  each $Y_k$, $k=1,\dots, m_2$, has bounded first order derivative, and the fact that $\nabla \rho$ and $\nabla^2 \rho$ are bounded, the latter follows from the assumption that  the sectional curvature is bounded, we see:
$$\left | d \rho^2 (Y_k) \left(x_{s}^\epsilon, y_s^\epsilon \right) -d \rho^2(\paral Y_k) 
 \left(x_{t_i}^\epsilon, y_s^{x_{t_i}^\epsilon} \right)\right| 
 \le  2\rho( y_s^\epsilon,y_s^{x_{t_i}^\epsilon}) \left(\rho( x_s^\epsilon, x_{t_i}^\epsilon) + \rho( y_s^\epsilon,y_s^{x_{t_i}^\epsilon}) \right).  $$
It is useful to observe that $Y_i$ is a vector field on $G$ depending on $x\in N$, so the (product) distance function
 on $N\times G$ is needed for the estimate. On the other hand we only need to control 
 the Hessian of the Riemannian distance on $G$ and the assumption on the boundedness of the sectional curvature of $G$ suffices.

A similar estimate applies to the first order differential involving $\tilde Y_0$, the sum of the Stratnovich correction for the stochastic integrals and $Y_0$.
 Again we use the
assumption that each $Y_k$ where  $k$ ranges from $1$ to $m_2$  is bounded, and $\tilde Y_0$ has bounded first order covariant derivative. To summing up, for a constant $C$ independent of $\epsilon$ and $i$, we have
$$\begin{aligned}
&\EE \rho^2\left(y^\epsilon _{r\wedge \tau }, y^{x_{t_i}^\epsilon}_ {r\wedge \tau }\right) 
\le C (\f 1 \epsilon+ \f {\Delta t_i} {\epsilon^2})\left( \EE \int_{t_i} ^{r\wedge \tau }  \rho^2\left(x^\epsilon _{s }, x^\epsilon _{t_i }\right) ds+
\EE \int_{t_i}^{r\wedge \tau } \rho^2\left(y^\epsilon _{s }, y^\epsilon _{s }\right) ds\right).
\end{aligned}$$
Use  Gronwall's inequality we obtain that,
\begin{equation}
\EE \rho^2\left(y^\epsilon _{r\wedge \tau }, y^{x_{t_i}^\epsilon}_ {r\wedge \tau }\right) 
\le C \left(\f {\Delta t_i} \epsilon+ \f {(\Delta t_i)^2} {\epsilon^2}\right)
 \sup_{s\in [t_i, t_{i+1}]}
\EE  \rho^2\left(x^\epsilon _{s }, x^\epsilon _{t_i }\right)
e^{ C (\f {\Delta t_i} \epsilon+ \f {(\Delta t_i)^2} {\epsilon^2})}.
\end{equation}
We can now plug in  the uniform estimates that $\EE \rho^2\left(x^\epsilon _{s }, x^\epsilon _{t_i }\right)
\le C|t_i-s|$ we see that
$$\EE \rho^2\left(y^\epsilon _{r\wedge \tau }, y^{x_{t_i}^\epsilon}_ {r\wedge \tau }\right) 
\le C\Delta t_i \left(\f {\Delta t_i} \epsilon+ \f {(\Delta t_i)^2} {\epsilon^2}\right) e^{ C (\f {\Delta t_i} \epsilon+ \f {(\Delta t_i)^2} {\epsilon^2})}.
$$
Observe that the constant here is independent of $\epsilon, i$ and independent of $r\in [t_i, t_{i+1}]$.
A similar estimates hold for $\EE \rho^2\left(y^\epsilon _{r }, y^{x_{t_i}^\epsilon}_ {r }\right)\chi_{\tau>r}$.

On $\{r>\tau\}$ we use a more crude estimate, which we obtain without using estimates on the slow variables at time $s$ and time $t_i$.  It is sufficient to estimate 
$\EE \rho^4\left(y^\epsilon _{r }, y_{t_i}^\epsilon \right) $ and $\EE \rho^4\left( y^{x_{t_i}^\epsilon}_ {r },y_{t_i}^\epsilon\right) $. Observing that on $[t_i, t_{i+1}]$,
the  processes begin with the same initial point and the driving vector fields of the SDEs to which they are solutions are $\f 1 \epsilon Y_i(x_r^\epsilon, \cdot)$ and  $\f 1 \epsilon Y_i(x_{t_i}^\epsilon, \cdot)$ respectively. We have assumed that
$\sum_{k=1}^m|Y_k|$ and $\tilde Y_0$ are bounded.
We then apply Lemma \ref{lemma2-2} to these SDEs.  In Lemma \ref{lemma2-2}  we take $K= \f c \epsilon$ where $c$ is a constant.
  Then we have
\begin{equation}
\EE \rho^4\left( y^{x_{t_i}^\epsilon}_ {r },y_{t_i}^\epsilon\right) 
+\EE \rho^4\left(y^\epsilon _{r }, y_{t_i}^\epsilon \right)
 \le c \left( \Delta t_i + \f{ \Delta t_i }{\epsilon} \right) e^{c \f {\Delta t_i}{\epsilon}}.
\end{equation}
Again, the  constant  is independent of $\epsilon, i$ and independent of $r\in [t_i, t_{i+1}]$.
We put the two estimates together to see that
$$\begin{aligned}
\EE \rho^2\left(y^\epsilon _{r }, y^{x_{t_i}^\epsilon}_ {r }\right) 
\le &C\Delta t_i \left(\f {\Delta t_i} \epsilon+ \f {(\Delta t_i)^2} {\epsilon^2}\right) e^{ C (\f {\Delta t_i} \epsilon+ \f {(\Delta t_i)^2} {\epsilon^2})}\\
&
+\f {2\sqrt K}\pi
\left(C\Delta t_i \left(\f {\Delta t_i} \epsilon+ \f {(\Delta t_i)^2} {\epsilon^2}\right) e^{ C (\f {\Delta t_i} \epsilon+ \f {(\Delta t_i)^2} {\epsilon^2})}\right)^{\f 12}\sqrt {c }\left( \Delta t_i + \f{ \Delta t_i }{\epsilon} \right) ^{\f 12}e^{\f 12 c \f {\Delta t_i}{\epsilon}}.
\end{aligned}$$
For $\epsilon$ small the first term is small. The second factor in the second term on the right hand side is large. We conclude that for another constant $\tilde C$,
$$
\EE \rho^2\left(y^\epsilon _{r }, y^{x_{t_i}^\epsilon}_ {r }\right) 
\le \tilde C\sqrt{\Delta t_i} (1+\epsilon)^{\f 12}\left( \f {(\Delta t_i)^2} {\epsilon^2}+\f {(\Delta t_i)^3} {\epsilon^3}\right)^{\f 12}  
e^{\tilde C (\f {\Delta t_i} \epsilon+ \f {(\Delta t_i)^2} {\epsilon^2})}.
$$
Let us suppose that $\Delta t_i\sim\epsilon |\ln \epsilon|^{a}$.
Then  the exponent
 $\f {\Delta t_i} \epsilon+ \f {(\Delta t_i)^2} {\epsilon^2}\sim |\ln \epsilon|^{2a}$.
 So for a constant $C'$,
 $$\begin{aligned}
\EE \rho^2\left(y^\epsilon _{r }, y^{x_{t_i}^\epsilon}_ {r }\right) 
\le C' \sqrt\epsilon\, |\ln \epsilon|^{2a}\, e^{\tilde C |\ln \epsilon|^{2a}}.
\end{aligned}$$
 The right hand side  is of order $\epsilon^\delta$  for $\delta<\f 12$.
We conclude the proof.
\hfill$\square$
\end{proof}

The next lemma is on the convergence of Riemannian sums in the stochastic averaging procedure
and the continuity of stochastic averages of a function with respect to a family of measures $\mu_x$.

\begin{lem}\label{Riemann-sum}
Suppose that for a sequence of numbers $\epsilon_n\downarrow 0$,  $x_\cdot^{\epsilon_n}$ converges almost surely in $C([0,T]; N)$ to  a stochastic process $x_\cdot$.
Suppose that there exists a constant  $p\ge 1$ s.t.  for $|s-t|$ sufficiently small,
$$\EE\left[  \sup_{0\le r \le T}\rho^{2p}(x_r^\epsilon, O)\right]<\infty, \quad
\EE \rho(x_s^\epsilon, x_{t}^\epsilon)^2 \le C|t-s|, \quad \forall \epsilon\in (0, 1].$$ 
Let $\mu_x$ be a family of probability measures on $G$, continuous in $x$ in the total variation norm.
Let $f:N\times G\to \R$ be a $BC^1$ function. Let $0=t_0<t_1<\dots <t_N=T$ and let $C_1= |f|_\infty K_2+ |\nabla f|_\infty$. Then, 
 the following statements hold:
\begin{enumerate}
\item [(i)] $$\sup_{t\in [0,T]}\EE\left|\int_G f(x_{t}^{\epsilon_n}, z) \; \mu^{x_{t}^{\epsilon_n}}(dz)-
\int_G f(\bar x_t, z) \; \mu^{\bar x_t}(dz) \right| \to 0.$$
In particular, the following converges in $L^1$,
$$\left| \int_0^t \int_G f(x_s^{\epsilon_n}, z) \; \mu^{x_s^{\epsilon_n}}(dz) \,ds -
\int_0^t \int_G f(\bar x_s, z) \; \mu^{\bar x_s}(dz)\,ds\right|\to 0.$$
\item[(ii)]  The following convergence is uniform in $\epsilon$:
$$\EE \left| \sum_{i=0}^{N-1}\Delta t_i\int_G f(x_{t_i}^\epsilon, z) \; \mu^{x_{t_i}^\epsilon}(dz)-\int_0^T \int_G f(x_s^\epsilon) \mu^{x_s^\epsilon} (dz) ds\right| \to 0.$$
Consequently, the Riemannian sum $\sum_{i=0}^{N-1}\Delta t_i\int_G f(\bar x_{t_i}, z) \; \mu^{\bar x_{t_i}}(dz)$ converges in $L^1$ to
$ \int_0^T \int_G f(\bar x_s, z) \mu^{\bar x_s} (dz) ds$.
\end{enumerate}

\end{lem}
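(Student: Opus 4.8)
The plan is to reduce both parts to the single averaged function $F(x):=\int_G f(x,z)\,\mu^x(dz)$, and then to exploit the almost sure uniform convergence together with the moment bounds to confine all the processes, uniformly in time and in $\epsilon$ (or in $n$), to a fixed geodesic ball on which $F$ and $x\mapsto\mu^x$ are genuinely uniformly continuous. First I record the properties of $F$. Since $f\in BC^1$, the Lipschitz constant of $f(\cdot,y)$ is $\le|\nabla f|_\infty$ for every $y$ and $|F|\le|f|_\infty$; Proposition \ref{proposition-Lipschitz}(1), applied with $K_1\equiv|\nabla f|_\infty$, gives
$$\bigl|F(x_1)-F(x_2)\bigr|\le|\nabla f|_\infty\,\rho(x_1,x_2)+|f|_\infty\,\bigl|\mu^{x_1}-\mu^{x_2}\bigr|_{TV},$$
so $F$ is bounded and, because $x\mapsto\mu^x$ is continuous in total variation, continuous; in particular $F$ is uniformly continuous on each closed ball $B_R=B_R(O)$ with a modulus $\omega_R$, and so is $x\mapsto\mu^x$ restricted to $B_R$, with a modulus I also denote $\omega_R$. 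Put $M:=\sup_{\epsilon\in(0,1]}\EE\bigl[\sup_{r\le T}\rho^{2p}(x_r^\epsilon,O)\bigr]$, which is finite by Lemma \ref{lemma4.1}; then $\sup_{t}\sup_{\epsilon}P(x_t^\epsilon\notin B_R)\le M/R^{2p}$ by Markov's inequality, and $\bar x$ satisfies the same bounds by Fatou.

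For (i), fix $\eta>0$ and choose $R$ so large that $P(x_t^{\epsilon_n}\notin B_R)+P(\bar x_t\notin B_R)\le\eta$ for all $t\in[0,T]$ and all $n$. On the complement of that bad event $\bigl|F(x_t^{\epsilon_n})-F(\bar x_t)\bigr|\le\omega_R\bigl(\rho(x_t^{\epsilon_n},\bar x_t)\bigr)\le\omega_R\bigl(\sup_{0\le s\le T}\rho(x_s^{\epsilon_n},\bar x_s)\bigr)$, and on the bad event it is $\le2|f|_\infty$, so
$$\sup_{t\in[0,T]}\EE\bigl|F(x_t^{\epsilon_n})-F(\bar x_t)\bigr|\le\EE\Bigl[\omega_R\bigl(\sup_{0\le s\le T}\rho(x_s^{\epsilon_n},\bar x_s)\bigr)\wedge2|f|_\infty\Bigr]+2|f|_\infty\,\eta.$$
Since $\sup_{s\le T}\rho(x_s^{\epsilon_n},\bar x_s)\to0$ almost surely (this is exactly convergence in $C([0,T];N)$) and $\omega_R(0)=0$, dominated convergence makes the first term tend to $0$; as $\eta>0$ was arbitrary, $\sup_t\EE|F(x_t^{\epsilon_n})-F(\bar x_t)|\to0$. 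The $L^1$ convergence of the time integrals then follows by Tonelli, $\EE\int_0^t|F(x_s^{\epsilon_n})-F(\bar x_s)|\,ds\le t\,\sup_{s\le T}\EE|F(x_s^{\epsilon_n})-F(\bar x_s)|\to0$.

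For (ii), write $\theta:=\max_i\Delta t_i$ and telescope,
$$\sum_{i=0}^{N-1}\Delta t_i\,F(x_{t_i}^\epsilon)-\int_0^T F(x_s^\epsilon)\,ds=\sum_{i=0}^{N-1}\int_{t_i}^{t_{i+1}}\bigl(F(x_{t_i}^\epsilon)-F(x_s^\epsilon)\bigr)\,ds,$$
so the $L^1$ norm of the left side is at most $\sum_i\int_{t_i}^{t_{i+1}}\EE|F(x_{t_i}^\epsilon)-F(x_s^\epsilon)|\,ds$. For $s\in[t_i,t_{i+1})$ I bound $\EE|F(x_{t_i}^\epsilon)-F(x_s^\epsilon)|$ using the inequality for $F$ above: the $\rho$-term contributes $|\nabla f|_\infty\,\EE\rho(x_{t_i}^\epsilon,x_s^\epsilon)\le|\nabla f|_\infty\sqrt{C\theta}$ by Cauchy–Schwarz and $\EE\rho^2\le C|t-s|$; for the total-variation term, confining $x_{t_i}^\epsilon,x_s^\epsilon$ to $B_R$ at cost $\le2M/R^{2p}$ gives $\EE|\mu^{x_{t_i}^\epsilon}-\mu^{x_s^\epsilon}|_{TV}\le\EE[\omega_R(\rho(x_{t_i}^\epsilon,x_s^\epsilon))\wedge1]+2M/R^{2p}$, and $\EE[\omega_R(\rho(x_{t_i}^\epsilon,x_s^\epsilon))\wedge1]\le\omega_R(\delta_0)+C\theta/\delta_0^2$ by Markov, for any $\delta_0>0$. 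Summing over $i$ multiplies by $\le T$; choosing first $R$ large, then $\delta_0$ small, then $\theta$ small, the whole expression tends to $0$ as $\theta\to0$, uniformly in $\epsilon\in(0,1]$. The assertion for $\bar x$ follows by the splitting $\sum_i\Delta t_i(F(\bar x_{t_i})-F(x_{t_i}^{\epsilon_n}))$ plus the uniform-in-$n$ bound just proved plus $\int_0^T(F(x_s^{\epsilon_n})-F(\bar x_s))\,ds$: the first $\to0$ in $L^1$ as $n\to\infty$ (finite sum, a.s.\ convergence, domination by $|f|_\infty T$), the third by (i), and the left side is independent of $n$, so one lets $n\to\infty$ and then $\theta\to0$; equivalently, $\bar x$ satisfies all the hypotheses by Lemma \ref{lemma4.1}, so (ii) applies to it directly.

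The main obstacle, and the reason the moment hypotheses are part of the statement, is that $N$ is non-compact and $f$ need not be compactly supported, so $x\mapsto\mu^x$ is only continuous — neither uniformly nor quantitatively so — over all of $N$. Everything hinges on trapping every process uniformly (in $t$ and in $\epsilon$) inside a large ball where $F$ and $\mu^{\cdot}$ recover uniform continuity, and on converting the smallness of the space and time increments into a probabilistic statement through Markov's inequality applied to $\EE\rho^2(x_s^\epsilon,x_t^\epsilon)\le C|t-s|$.
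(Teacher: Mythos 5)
Your proof is correct and follows essentially the same strategy as the paper's: localize all processes to a large geodesic ball using the uniform moment bounds and Markov's inequality, exploit continuity of $x\mapsto\mu^x$ and of the averaged function inside that ball, and use $\EE\rho^2(x_s^\epsilon,x_t^\epsilon)\le C|t-s|$ to control the Riemann-sum error. One small refinement worth noting: the paper's proof invokes a local Lipschitz constant $c_n$ for $x\mapsto\mu^x$ on the exhausting sets $D_n$, which goes slightly beyond the stated hypothesis of mere TV-continuity; your use of a modulus of uniform continuity $\omega_R$ on $B_R$ (plus Markov on $\{\rho>\delta_0\}$ for part (ii)) uses only what the lemma assumes and is therefore a cleaner match to the hypotheses.
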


\begin{proof}
Suppose that $x_\cdot^{\epsilon_n}$ converges to $\bar x_\cdot$. We simplify the notation by assuming that $x^\epsilon\to x$ almost surely.
We may assume that $N$ is not compact, the compact case is easier.
Let $D_n$ be a family of relatively compact open set such that $D_n\subset B_{a_n}\subset B_{a_{n+2} }\subset D_{n+1}$ where $B_{a_n}$ is the geodesic ball centred at $O$ of radius $a_n$ where $a_n\to \infty$.  This exists by a theorem of Greene and Wu.
 For any $t\in [0,T]$ and for any $\epsilon\in (0, 1]$,
$$\begin{aligned}
&\left|\int_G f(x_{t}^\epsilon, z) \; \mu^{x_{t}^\epsilon}(dz)-
\int_G f(\bar x_t, z) \; \mu^{\bar x_t}(dz) \right| \\
&\le
 \int_G \left| f(x_t^\epsilon, z) - f(\bar x_t, z) \right| \; \mu^{\bar x_t}(dz)+
\left|  \int_G f(\bar x_t, z)  \mu^{\bar x_t}(dz)-  \int_G f(\bar x_t, z) \mu^{x_t^\epsilon}(dz) \right|\\
&\le|\nabla f|_\infty \;  \rho( x_t^\epsilon, \bar x_t)+ |f|_\infty  | \mu^{\bar x_t}-  \mu^{x_t^{\epsilon}}|_{TV}.
\end{aligned}$$

 We have control over $ \rho( x_t^\epsilon, \bar x_t)$, it is bounded by $ \rho( x_t^\epsilon, O)$ and
 $ \rho( \bar x_t, O)$. By the assumption, they are bounded in $L^p$, uniformly in $\epsilon\in (0,1]$ and in $t\in [0,T]$. Similarly
 we also have uniform control over $P(\bar x_t\not \in D_n)$ and $P( x_t^\epsilon \not \in D_n)$,
 they are bounded by $c\f 1 n $ where $c$ is a constant. We observe that 
$$  | \mu^{\bar x_t}-  \mu^{x_t^{\epsilon}}|_{TV}
\le  | \mu^{\bar x_t}-  \mu^{x_t^{\epsilon}}|_{TV}\chi_{\bar x_t\in D_n}\chi_{\bar x_t^\epsilon\in D_n}+2 ( \chi_{\bar x_t\not \in D_n}+\chi_{x_t^\epsilon\not \in D_n})$$
 and there exists $c_n$ such that  
 $ | \mu^{\bar x_t}-  \mu^{x_t^\epsilon}|_{TV}\chi_{\bar x_t\in D_n}\chi_{\bar x_t^\epsilon\in D_n}
 \le c_n\rho( x_t^{\epsilon_n}, \bar x_t)$.
We take $n$ large, so that $P(\bar x_t\not \in D_n)$ and $P( x_t^\epsilon \not \in D_n)$ are as small as we want. Then for $n$ fixed we see that 
the $c_n\rho( x_t^{\epsilon_n}, \bar x_t)$ converges, as $\epsilon\to 0$,  in $L^1$.
 Thus, $\sup_{0\le t\le T}\EE | \mu^{\bar x_t}-  \mu^{x_t^{\epsilon}}|\to 0$ and 
 $$\sup_{0\le t\le T}\EE\left|\int_G f(x_{t}^{\epsilon_n}, z) \; \mu^{x_{t}^{\epsilon_n}}(dz)-
\int_G f(\bar x_t, z) \; \mu^{\bar x_t}(dz) \right|$$ converges to zero. This proves  part (i).
  Since $\EE \rho(x_s^\epsilon, x_{t_i}^\epsilon)^2 \le C|t_{i+1}-t_i|$,
$$\begin{aligned}
&\EE \left| \sum_{i=0}^{N-1}\Delta t_i\int_G f(x_{t_i}^\epsilon, z) \; \mu^{x_{t_i}^\epsilon}(dz)-\int_0^T \int_G f(x_s^\epsilon) \mu^{x_s^\epsilon} (dz) ds\right| \\
&\le T\, |\nabla f|_\infty \sup_{s\in [t_i, t_{i+1})}\EE [ \rho(x^\epsilon_s, x^\epsilon_{t_i})]
+|f|_\infty \,T
\sup_{s\in [t_i, t_{i+1})} \EE \left[| \mu^{x^\epsilon_s}-  \mu^{x^\epsilon_{t_i}}|_{TV}\right]\to 0.
\end{aligned}$$
The convergence can be proved,  again by  breaking the total variation norm into two parts, in one part  the processes are in $D_n$, and in the other part they are not. Since $x_t^\epsilon$ converges to $x_t$ as a stochastic process on $[0, T]$, we  also have that
$\EE \rho(x_s, x_{t_i}) \le C|t_{i+1}-t_i|$.  We apply the same argument  to $\bar x_t$ to obtain that
$$\begin{aligned}
&\EE\left|\sum_{i=0}^{N-1}\Delta t_i\int_G f(\bar x_{t_i}, z) \; \mu^{\bar x_{t_i}}(dz)-\int_0^T \int_G f(\bar x_s) \mu^{\bar x_s} (dz) ds\right|\to 0.
\end{aligned}$$
This concludes the proof.
 \hfill$\square$
\end{proof}
Suppose we assume  furthermore that there exists a constant $K$ such that 
$$|\mu^{x_1}-\mu^{x_2}|_{TV} \le K( 1+\rho(x_1, O)
+ \rho(x_2,O)) \rho( x_1, x_2).$$
Then explicit estimates can be made for the convergence in Lemma \ref{Riemann-sum}, e.g. $$\begin{aligned}
&\left|\int_G f(x_{t}^\epsilon, z) \; \mu^{x_{t}^\epsilon}(dz)-
\int_G f(\bar x_t, z) \; \mu^{\bar x_t}(dz) \right| \\
&\le|\nabla f|_\infty \;  \rho( x_t^\epsilon, \bar x_t) +|f|_\infty  K( 1+\rho^p( \bar x_t, O)
+ \rho^p(x_t^\epsilon,O)) \rho( x_t^\epsilon, \bar x_t).
\end{aligned}$$
To this we may apply H\"older's inequality and obtain:
$$\begin{aligned}
&\EE\left|\int_0^T \int_G f(x_{t}^{\epsilon_n}, z) \; \mu^{x_{t}^{\epsilon_n}}(dz)\, dt-
\int_0^T \int_G f(\bar x_t, z) \; \mu^{\bar x_t}(dz)\, dt \right| \\
& \le   |\nabla f|_\infty \;   \EE\int_0^T\rho( x_t^\epsilon, \bar x_t) \; dt
 +|f|_\infty  K \EE \left| \int_0^T ( 1+\rho^p( \bar x_t, O)
+ \rho^p(x_t^\epsilon,O)) \rho( x_t^\epsilon, \bar x_t) \, dt\right|\\
& \le   |\nabla f|_\infty \;  T\EE \sup_{s\le t}\rho( x_t^\epsilon, \bar x_t) 
 +|f|_\infty  K T  \sqrt{\EE\sup_{t\le T}  ( 1+\rho^p( \bar x_t, O)
+ \rho^p(x_t^\epsilon,O))^2} \sqrt{  \EE \sup_{t\le T}  \rho^2( x_t^\epsilon, \bar x_t) }.
\end{aligned}
$$

 In the proposition below we are interested in the time average concerning a product function $f_1f_2$, where $f_1:N\to \R$ is $C^\infty$ with has compact support and $f_2:G\to \R$ is smooth.
\begin{prop}\label{averaging-proposition}
Suppose the following conditions.
\begin{enumerate}
\item [(1)]  $\mu_x$ is a family of probability measures on $G$ for which
the locally uniform LLN  assumption (Assumption \ref{assumptions-Birkhoff}) holds. \item [(2)] Assumption \ref{assumeY}.
\item[(3)] There exist constants $p\ge 1$ and $c$ such that for  $s,t\in [r_1, r_2]$ where $r_2-r_1$ is sufficiently small,
 $$\sup_{\epsilon \in (0, 1]}\sup_{s,t\in [r_1, r_2]}\EE \rho^2(x_s^\epsilon, x_t^\epsilon) \le c|t-s|,
\quad  \sup_{0\le s\le T} \sup_{\epsilon \in (0, 1]}\EE \rho^{2p}(x_s^\epsilon, O)<\infty.$$
\item[(4)]  $\epsilon_n$ is a sequence of numbers converging to $0$ with $\sup_{t\le T} \rho(x_t^{\epsilon_n},\bar x_t)$ converges to zero almost surely. 
\item [(5)]Let $f:N\times G\to \R$ be a smooth and globally Lipschitz continuous function. Suppose that either $f$ is independent of the first variable or  for each $y\in G$,   the support of $f(\cdot, y)$ 
is contained in a compact set $D$.
\end{enumerate}
Then the following random variables converge to zero in $L^1$:
$$\int_0^T f(x_s^\epsilon, y_s^\epsilon) ds-\int_0^T\int_G f(\bar x_s, z)\, \mu^{\bar x_s} (dz)\, ds.  $$
\end{prop}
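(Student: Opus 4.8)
The plan is to run the standard ``freeze the slow variable on a fine partition'' scheme. Fix a partition $0=t_0<t_1<\dots<t_N=T$ into subintervals of equal length $\Delta t_i$, to be chosen of order $\epsilon|\ln\epsilon|^a$ for a suitable $a>0$, and write $\bar f(x)=\int_G f(x,z)\,\mu^x(dz)$. Recalling the frozen-coefficient fast process $y_s^{x_{t_i}^\epsilon}$ from (\ref{time-change}), I would decompose the target random variable along the chain
$$
\begin{aligned}
\int_0^T f(x_s^\epsilon, y_s^\epsilon)\, ds
&\approx \sum_{i} \int_{t_i}^{t_{i+1}} f(x_{t_i}^\epsilon, y_s^{x_{t_i}^\epsilon})\, ds
\approx \sum_{i} \Delta t_i\, \bar f(x_{t_i}^\epsilon) \\
&\approx \int_0^T \bar f(x_s^\epsilon)\, ds
\approx \int_0^T \bar f(\bar x_s)\, ds
= \int_0^T \!\! \int_G f(\bar x_s, z)\, \mu^{\bar x_s}(dz)\, ds ,
\end{aligned}
$$
bound the four consecutive errors in $L^1$, sum them, and let $\epsilon=\epsilon_n\to 0$ along the sequence of hypothesis (4).

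\emph{First error (freezing the slow variable inside $f$).} Since $f$ is globally Lipschitz on $N\times G$,
$$
\bigl| f(x_{t_i}^\epsilon, y_s^{x_{t_i}^\epsilon}) - f(x_s^\epsilon, y_s^\epsilon)\bigr|
\le \Lip(f)\,\bigl( \rho(x_s^\epsilon, x_{t_i}^\epsilon) + \rho(y_s^\epsilon, y_s^{x_{t_i}^\epsilon})\bigr).
$$
Integrating, summing over $i$, and applying Cauchy--Schwarz with the slow-variable estimate of hypothesis (3) handles the first term with a bound $O(T\sqrt{\Delta t_i})\to 0$. The second term is exactly what Lemma \ref{freeze-lemma}(2) controls: under Assumption \ref{assumeY} together with hypothesis (3), $\sup_i\sup_{r\in[t_i,t_{i+1}]}\EE\,\rho^2(y_r^\epsilon, y_r^{x_{t_i}^\epsilon})$ is of order $\epsilon^\delta$ with $\delta\in(0,\tfrac12)$ once $\Delta t_i\sim\epsilon|\ln\epsilon|^a$; since there are $\sim T/\Delta t_i$ subintervals each of length $\Delta t_i$, Cauchy--Schwarz gives a total contribution $O(T\,\epsilon^{\delta/2})\to 0$. (If $f$ is compactly supported in the first variable one restricts these bounds to the support $D$, whose excursion probabilities $P(x_s^\epsilon\notin D)$ are uniformly small by the moment bound in hypothesis (3); if $f$ does not depend on $x$ there is nothing to restrict.)

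\emph{Second error (ergodic averaging of the frozen fast process).} This is Lemma \ref{lemma2-3-2}: the locally uniform law of large numbers of hypothesis (1) (Assumption \ref{assumptions-Birkhoff}) yields
$$
\EE \sum_{i} \Bigl| \int_{t_i}^{t_{i+1}} f(x_{t_i}^\epsilon, y_s^{x_{t_i}^\epsilon})\, ds - \Delta t_i\, \bar f(x_{t_i}^\epsilon)\Bigr|
\le c\, T\, \lambda\!\bigl(\tfrac{\Delta t_i}{\epsilon}\bigr) \sup_{x\in D} \| f(x,\cdot) - \bar f(x)\|_s ,
$$
and the supremum on the right is finite by hypothesis (5) together with the total-variation continuity of $x\mapsto\mu^x$. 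Since $\Delta t_i/\epsilon=|\ln\epsilon|^a\to\infty$ and $\lambda(t)\to 0$, this term vanishes. The third and fourth errors are precisely Lemma \ref{Riemann-sum}: part (ii) bounds $\EE\bigl|\sum_i\Delta t_i\bar f(x_{t_i}^\epsilon)-\int_0^T\bar f(x_s^\epsilon)\,ds\bigr|$, uniformly in $\epsilon$, by a quantity tending to $0$ with the mesh (using hypothesis (3) and the continuity of $\mu^x$), and part (i) gives $\EE\int_0^T|\bar f(x_s^{\epsilon_n})-\bar f(\bar x_s)|\,ds\to 0$ from the almost sure convergence $x^{\epsilon_n}\to\bar x$ of hypothesis (4) together with the uniform $L^{2p}$ bound of hypothesis (3).

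\emph{Main obstacle.} The delicate point is the simultaneous tuning of the mesh $\Delta t_i=\Delta t_i(\epsilon)$: it must tend to $0$ so that the slow variable barely moves on each slot and the Riemann sums converge, yet $\Delta t_i/\epsilon$ must go to infinity fast enough for the averaging rate $\lambda(\Delta t_i/\epsilon)$ to decay, while the exponential factor $\exp\bigl(C\Delta t_i/\epsilon+C(\Delta t_i)^2/\epsilon^2\bigr)$ in Lemma \ref{freeze-lemma} must not swamp the polynomial gain from $\sqrt{\Delta t_i}$. The choice $\Delta t_i\sim\epsilon|\ln\epsilon|^a$ is exactly the window that works: the exponent is then of order $|\ln\epsilon|^{2a}$, so $\exp(C|\ln\epsilon|^{2a})$ is beaten by any fixed positive power of $\epsilon$, while $\lambda(|\ln\epsilon|^a)\to 0$ and $\Delta t_i\to 0$. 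A secondary nuisance, already handled in Lemmas \ref{Riemann-sum} and \ref{freeze-lemma}, is the non-compactness of $N$, dealt with by truncating to geodesic balls and using the uniform moment bounds to control the excursion probabilities.
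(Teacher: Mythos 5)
Your proposal is correct and takes essentially the same route as the paper: same partition, same intermediate comparison chain through the frozen fast process $y_s^{x_{t_i}^\epsilon}$ and $\bar f(x_{t_i}^\epsilon)$, and the same supporting lemmas (Lemma \ref{freeze-lemma}, Lemma \ref{lemma2-3-2}, Lemma \ref{Riemann-sum}) with the same mesh choice $\Delta t_i\sim\epsilon|\ln\epsilon|^a$. The only cosmetic difference is that you merge the paper's first two error terms (replacing $x_s^\epsilon$ by $x_{t_i}^\epsilon$ in $f$, and replacing $y_s^\epsilon$ by the frozen process) into one Lipschitz estimate on $N\times G$, whereas the paper keeps them as separate summands; the underlying bounds are identical.
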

\begin{proof}
 Let $0=t_0<t_1<\dots<t_N=T$ and let $\Delta t_i=t_{i+1}-t_i$. Then, recalling the notation given in (\ref{time-change}),
$$\begin{aligned}
&\int_0^T f(x_s^\epsilon, y_s^\epsilon) ds
=\sum_{n=0}^{N-1} \int_{t_i}^{t_{i+1}} f(x_s^\epsilon, y_s^\epsilon) ds\\
=&\sum_{n=0}^{N-1} \int_{t_i}^{t_{i+1}} \left[ f(x_s^\epsilon, y_s^\epsilon) -f(x_{t_i}^\epsilon, y_s^\epsilon) \right]\,ds
+\sum_{n=0}^{N-1} \int_{t_i}^{t_{i+1}} \left[ f(x_{t_i}^\epsilon, y_s^\epsilon)
-f\left(x_{t_i}^\epsilon, y^{x_{t_i }^\epsilon}_r\right) \right]\,ds\\
&+\sum_{n=0}^{N-1}\left[  \int_{t_i}^{t_{i+1}}f\left(x_{t_i}^\epsilon, y^{x_{t_i }^\epsilon}_r)\right) \,ds- \Delta t_i \int_G f(x_{t_i}^\epsilon, z) \mu^{x_{t_i}^\epsilon} (dz)\right]\\
&+\left[ \sum_{n=0}^{N-1} \Delta t_i\, \int_G f(x_{t_i}^\epsilon, z) \mu^{x_{t_i}^\epsilon} (dz)
-\int_0^T \int_G f( x_s^\epsilon,z) \mu^{ x^\epsilon_s} (dz)\,ds\right]\\
&+\left[\int_0^T \int_G f( x_s^\epsilon,z) \mu^{ x^\epsilon_s} (dz)\,ds
-\int_0^T \int_G f(\bar x_s,z) \mu^{\bar x_s} (dz)\,ds\right]
+\int_0^T \int_G f(\bar x_s,z) \mu^{\bar x_s} (dz)\,ds.
\end{aligned}$$
Using the fact that $f$ is Lipschitz continuous  in the first variable and the assumptions
on the moments of  $\rho(x_t^\epsilon, x_s^\epsilon)$ we see that for a constant $K$, 
$$\begin{aligned}
\sum_{i=0}^{N-1} \EE\int_{t_i}^{t_{i+1} } \left| f(x_r^\epsilon, y_r^\epsilon)-f(x_{t_i}^\epsilon, y_r^\epsilon)\right|\,dr &\le  K\sum_{i=0}^{N-1} \int_{t_i}^{t_{i+1} }
 \EE  \rho (x_r^\epsilon, x_{t_i}^\epsilon)\, dr\\
&\le K  \;T \EE  \rho (x_r^\epsilon, x_{t_i}^\epsilon) \le TKc\max_i \sqrt{\Delta t_i} .
\end{aligned}$$
By choosing $\Delta t_i=o(\epsilon)$ we see that the first term on the right hand side converges to zero.
The converges of the second term follows directly from Lemma \ref{freeze-lemma} by choosing
$\Delta t_i\sim \epsilon|\ln \epsilon|^a$ where $a>0$ and Assumption \ref{assumeY}.
 By Lemma \ref{lemma2-3-2} and Assumption \ref{assumptions-Birkhoff}, the third term converges 
 if we choose $\f {\epsilon}{\Delta t_i}=o(\epsilon)$. The convergence of the fourth and fifth terms follow respectively from part (i) and part (ii) of Lemma \ref{Riemann-sum}.
 \hfill$\square$
\end{proof}
 
  We are now ready to prove the main averaging theorem, Theorem 2 in  section~\ref{results}.
  This proof has the advantage for being concrete, from this an estimate for thew  rate of convergence is also  expected. 
 \begin{thm}\label{main-2}
Suppose  the following statements hold.
\begin{enumerate}
\item [(a)]
 Assumptions \ref{assumeX} and \ref{assumeY}. 
\item [(b)] There exists a family of  probability measures $\mu_x$ on $G$ for which the locally uniform LLN  assumption (Assumption \ref{assumptions-Birkhoff}) holds.  
\end{enumerate}
Then, as $\epsilon\to 0$,    the family of stochastic processes $\{x_t^\epsilon, \epsilon>0\}$ converges weakly
on any compact time intervals  to a Markov process with generator $\bar \L$.
\end{thm}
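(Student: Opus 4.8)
The plan is to prove tightness of the slow variables, to identify every subsequential limit as a solution of the martingale problem for $\bar\L$ — the averaging input being supplied by Proposition~\ref{averaging-proposition} — and then to invoke well-posedness of this martingale problem to upgrade subsequential convergence to convergence of the whole family. First, by Lemma~\ref{lemma4.1}, Assumption~\ref{assumeX} implies that $\{x_\cdot^\epsilon:\epsilon\in(0,1]\}$ is tight in $C([0,T];N)$, that every limit process is non-explosive, and that the bounds $\EE\,\rho^2(x_s^\epsilon,x_t^\epsilon)\le c|t-s|$ and $\sup_{s,t\le T}\EE\,\rho^{2p}(x_s^\epsilon,x_t^\epsilon)<\infty$ hold uniformly in $\epsilon$. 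Fix $\epsilon_n\downarrow 0$; after passing to a subsequence (not relabelled) $x_\cdot^{\epsilon_n}$ converges weakly in $C([0,T];N)$ to some $\bar x_\cdot$, and by Skorokhod's representation theorem we may assume the convergence is almost sure on a common probability space. It then suffices to show that every such $\bar x$ solves the martingale problem for $(\bar\L,\delta_{x_0})$ and that this martingale problem is well posed.

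\emph{Identification of the limit.} Fix $f\in C_c^\infty(N)$. Rewriting the Stratonovich equation for $x_t^\epsilon$ in It\^o form and applying It\^o's formula gives
\[
f(x_t^\epsilon)=f(x_0)+\int_0^t \L_{y_s^\epsilon}^{(1)}f(x_s^\epsilon)\,ds+\int_0^t\sum_{k=1}^{m_1} df_{x_s^\epsilon}\!\big(X_k(x_s^\epsilon,y_s^\epsilon)\big)\,dB_s^k ,
\]
where $\L_y^{(1)}=\tfrac12\sum_{k=1}^{m_1}(X_k(\cdot,y))^2+X_0(\cdot,y)$ is the generator of the slow equation with $y$ frozen, and the stochastic integral is a genuine martingale because $df(X_k)$ is bounded ($\mathrm{supp}\,f$ being compact). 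Hence, for all $0\le s_1<\dots<s_k\le s<t$ and bounded continuous $g_1,\dots,g_k$ on $N$,
\[
\EE\Big[\Big(f(x_t^{\epsilon_n})-f(x_s^{\epsilon_n})-\int_s^t \L_{y_r^{\epsilon_n}}^{(1)}f(x_r^{\epsilon_n})\,dr\Big)\prod_{j=1}^k g_j(x_{s_j}^{\epsilon_n})\Big]=0 .
\]
I would then let $n\to\infty$: the boundary terms and $\prod_j g_j(x_{s_j}^{\epsilon_n})$ converge by almost sure convergence and boundedness, while for the time integral I apply Proposition~\ref{averaging-proposition} (on $[s,t]$) to the function $(x,y)\mapsto \L_y^{(1)}f(x)$ — which has the regularity required there, being built from the $C^3$ data $X_i,X_0$ and from $f\in C_c^\infty(N)$, is $x$-supported in the fixed compact set $\mathrm{supp}\,f$, and whose hypotheses are precisely Assumptions~\ref{assumeX}--\ref{assumeY}, hypothesis~(b) on $\mu_x$, the uniform bounds of Lemma~\ref{lemma4.1}, and the almost sure convergence just arranged. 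This yields
\[
\int_s^t \L_{y_r^{\epsilon_n}}^{(1)}f(x_r^{\epsilon_n})\,dr\ \longrightarrow\ \int_s^t\int_G \L_y^{(1)}f(\bar x_r)\,\mu_{\bar x_r}(dy)\,dr=\int_s^t\bar\L f(\bar x_r)\,dr
\]
in $L^1$, the last equality being formula \eqref{limit} for $\bar\L$. Passing to the limit gives $\EE[(f(\bar x_t)-f(\bar x_s)-\int_s^t\bar\L f(\bar x_r)\,dr)\prod_{j=1}^k g_j(\bar x_{s_j})]=0$ for all such $k,s_j,g_j$, i.e. $f(\bar x_t)-f(\bar x_0)-\int_0^t\bar\L f(\bar x_r)\,dr$ is a martingale for the natural filtration of $\bar x$, so $\bar x$ solves the martingale problem for $\bar\L$ started at $x_0$.

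\emph{Well-posedness and conclusion.} By \eqref{limit}, $\bar\L$ is a second order operator with diffusion coefficient $\bar a(x)=\int_G\sum_k X_k(\cdot,y)\otimes X_k(\cdot,y)\,\mu_x(dy)$ and a corresponding drift. By Theorem~\ref{lln}(a) the map $x\mapsto\mu_x$ is locally Lipschitz in total variation, and together with the $C^1$ and growth bounds on $X_i$ and $\tilde X_0$ in Assumption~\ref{assumeX} this makes the coefficients of $\bar\L$ locally Lipschitz; using the sum-of-squares representation $\bar\L=\tfrac12\sum_i\bar X_i^2+\bar X_0$ (see the Appendix in \S\ref{proof}), the associated SDE on $N$ has a unique strong solution, non-explosive by Lemma~\ref{lemma4.1}, so the $(\bar\L,\delta_{x_0})$-martingale problem is well posed and its solution is a Markov process with generator $\bar\L$. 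Hence all subsequential limits of $\{x_\cdot^\epsilon\}$ coincide, $x_\cdot^\epsilon$ converges weakly on $C([0,T];N)$ as $\epsilon\to0$, and the limit is the asserted Markov process.

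\emph{Main obstacle.} The substance of the proof sits entirely in the averaging estimate of the second step, i.e. in Proposition~\ref{averaging-proposition} and the lemmas feeding it. The delicate point is the choice of the partition mesh $\Delta t_i$: on $[t_i,t_{i+1}]$ one freezes the slow variable, and Lemma~\ref{freeze-lemma} shows the resulting error is controlled only if $\Delta t_i\sim\epsilon|\ln\epsilon|^a$, so that $\Delta t_i/\epsilon+(\Delta t_i/\epsilon)^2$ grows merely logarithmically; yet one simultaneously needs $\Delta t_i/\epsilon\to\infty$ fast enough that the LLN rate $\lambda(\Delta t_i/\epsilon)\to0$ for the ergodic average over the fast diffusion to take effect (Lemma~\ref{lemma2-3-2} and Assumption~\ref{assumptions-Birkhoff}), together with a further $\Delta t_i\to0$ bound to absorb the Lipschitz-in-$x$ term; reconciling these requirements is the crux. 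A manifold-specific difficulty, already handled in Lemmas~\ref{lemma2-1} and~\ref{freeze-lemma}, is that the Riemannian distance is only $C^1$ on the cut locus, which forces applying It\^o's formula to smoothed distance-like functions and splitting the comparison of $y_r^\epsilon$ with the frozen fast process at the exit time from a tube of radius $\pi/(2\sqrt K)$. A secondary issue requiring care is the well-posedness of the limiting martingale problem for the possibly degenerate operator $\bar\L$ with merely locally Lipschitz coefficients.
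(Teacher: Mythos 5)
Your overall strategy --- tightness of $\{x_\cdot^\epsilon\}$ via Lemma~\ref{lemma4.1}, Skorokhod representation, identification of subsequential limits as solutions of the $\bar\L$-martingale problem via It\^o's formula and Proposition~\ref{averaging-proposition} applied to $(x,y)\mapsto \L_y^{(1)}f(x)$ --- is exactly the paper's argument, and your identification step is carried out correctly. The one point where you go beyond the paper is the final ``Well-posedness and conclusion'' paragraph: the paper contents itself with the assertion that it suffices to show every subsequential limit solves the same martingale problem, and does not prove uniqueness of that martingale problem, so you have spotted a real gap. However, the argument you supply to fill it is not sound as written. Local Lipschitz continuity of the diffusion matrix $\bar a(x)=\int_G\sum_k X_k\otimes X_k\,\mu_x(dy)$ does not in general give a locally Lipschitz square root when $\bar a$ is allowed to degenerate (a Lipschitz nonnegative matrix field has, in general, only a H\"older-$1/2$ square root), so ``locally Lipschitz coefficients'' of $\bar\L$ do not directly yield a unique strong solution of an associated SDE. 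Moreover, the Appendix in \S\ref{proof} does not actually deliver a decomposition $\bar\L=\tfrac12\sum_i\bar X_i^2+\bar X_0$ with vector fields $\bar X_i$ of controlled regularity: the formula recorded there is $\bar\L=\tfrac12\sum_{k,l}\bar a_{k,l}\,E_kE_l+\sum_k\bar b_0^k\,E_k$ in terms of an auxiliary spanning family $\{E_k\}$, which is not a sum-of-squares form, and no Lipschitz continuity of a square root of $(\bar a_{k,l})$ is established. So the uniqueness step is a genuine remaining gap --- in your write-up as well as, implicitly, in the paper itself --- and would need a separate argument (e.g. restricting to settings where $\bar a$ is uniformly elliptic, or where one can extract a Lipschitz $\bar\sigma$ with $\bar\sigma\bar\sigma^\top=\bar a$, or invoking a Stroock--Varadhan-type uniqueness theorem whose hypotheses are verified).
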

\begin{proof}
By Prohorov's theorem, a set of probability measures is tight if and only if its relatively weakly compact,  i.e. every sequence has a sub-sequence that converges weakly.
It is therefore sufficient to prove that every limit process of the stochastic processes $x_t^\epsilon$ is a Markov process with the same Markov generator. 
Every sequence of weakly convergent stochastic processes on an interval $[0,T]$
can be realised on a probability space as a sequence of stochastic processes that converge almost surely  on $[0,T]$ with respect to the supremum norm in time. It is sufficient to prove that if  a subsequence $\{x_t^{\epsilon_n}\} $ converges almost surely on $[0,T]$,  the limit is a Markov process with generator $\bar \L$. For this we apply Stroock-Varadhan's martingale method \cite{Stroock-Varadhan, Papanicolaou-Stroock-Varadhan77}. To ease notation we may assume that the whole family $x_t^\epsilon$ converges almost surely.  Let  $f$ be a real valued smooth function on $N$ with compact support. Let $\bar x_t$ be the limit Markov process. We must prove that
$f(\bar x_t) -f(x_0)-\int_0^t \bar  \L f(\bar x_r)dr $
is a martingale. In other words we prove that  for any bounded measurable random variable $G_s\in \F_s$ and for any $s<t$,  $\EE\left( G_s (f(\bar x_t)-f(\bar x_s) -\int_s^t \bar \L f(x_r)dr)\right)=0$.
On the other hand, for each $\epsilon>0$,
$$f(x_t^\epsilon)-f(x_s^\epsilon)-\int_s^t \left( \f 1 2\sum_{i=1}^{m_1} (X_i(\cdot, y_r^\epsilon))^2 f+X_0(\cdot, y_r^\epsilon )f \right)   (x_r^\epsilon) \;dr$$
is a martingale. Let us  introduce the notation:
$$F(x_r^\epsilon, y_r^\epsilon)= \left( \f 1 2\sum_{i=1}^{m_1} (X_i(\cdot, y_r^\epsilon))^2f +X_0(\cdot, y_r^\epsilon ) f\right)   (x_r^\epsilon).$$
Since $x_t^\epsilon$ converges to $\bar x_s$ it is sufficient to prove that as $\epsilon\to 0$,
$$\EE \left[ G_s \left( \int_s^t F(x_r^\epsilon, y_r^\epsilon) dr-
\int_s^t \bar \L f(x_r)dr\right) \right]\to 0.$$
Even simpler we only need to  prove that $\int_s^t F(x_r^\epsilon, y_r^\epsilon)dr$ converges to
$\int_s^t \bar \L f(x_r)dr $ in $L^1$. 
Under Assumption \ref{assumeX}, we may apply  Lemma \ref{lemma4.1}  from which we see that 
conditions (3) and (4) of  Proposition \ref{averaging-proposition} hold.  Since $f$ has compact support, $F$ has compact support in the first variable. 
We may apply Proposition \ref{averaging-proposition} to the function $F$ to complete the proof.
\hfill$\square$
\end{proof}

We remark that  the locally uniform law of large numbers hold  if $G$ is compact,  if $\L_x$ satisfies {\it strong H\"ormander's condition}, or if $\L_x$ satisfies  H\"ormander's condition with the additional assumption that $\L_x$ has a unique invariant probability measure. 

We obtain the following Corollary.

\begin{corollary}
Let $G$ be compact. Suppose  Assumptions \ref{assumeX} and \ref{assumeY}. 
Suppose that $\L_x$ satisfies  H\"ormander's condition  and  that it  has a unique invariant probability measure.
Then  $\{x_t^\epsilon, \epsilon>0\}$ converges weakly,  on any compact time intervals, to a Markov process with generator $\bar \L$.
\end{corollary}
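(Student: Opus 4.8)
The plan is to obtain this as a direct consequence of Theorem~\ref{main-2}. That theorem has two hypotheses: (a) Assumptions~\ref{assumeX} and~\ref{assumeY}, which are assumed here outright, and (b) the existence of a family of probability measures $\mu_x$ on $G$ satisfying the locally uniform law of large numbers of Assumption~\ref{assumptions-Birkhoff}. So the whole task reduces to producing such a family, and for this I would take $\mu_x$ to be the unique $\L_x$-invariant probability measure, which exists by hypothesis.

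First I would verify the regularity requirement in Assumption~\ref{assumptions-Birkhoff}, namely continuity of $x\mapsto\mu_x$ in the total variation norm. Since $G$ is compact and each $\L_x$ satisfies H\"ormander's condition, Proposition~\ref{uniform-sub} gives that $\L_x$ and $\L_x^*$ are hypo-elliptic Fredholm operators of index zero with locally uniform sub-elliptic estimates; feeding these into Proposition~\ref{continuity-invariant-measure} shows that the density $q(x,y)$ of $\mu_x$ is smooth in both variables and that $x\mapsto\mu_x$ is locally Lipschitz, hence continuous, in the total variation norm.

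Next I would supply the quantitative averaging bound. With $G$ compact, $\sum_{j=1}^{m_2}|Y_j|_\infty<\infty$ (from Assumption~\ref{assumeY}), and $\mu_x$ the unique invariant measure, Theorem~\ref{lemma-lln} provides, for each $s>1+\tfrac{\dim(G)}{2}$, a locally bounded $C(x)$ with
\begin{equation*}
\left|\frac1t\int_{t_0}^{t+t_0} g(z_r^x)\,dr-\int_G g\,d\mu_x\right|_{L_2(\Omega)}\le C(x)\,\|g\|_s\,\frac1{\sqrt t}
\end{equation*}
for all smooth $g:G\to\R$ and all starting data. This is exactly Assumption~\ref{assumptions-Birkhoff} with rate $\lambda(t)=1/\sqrt t$ and $\alpha(x)=C(x)$; equivalently one may simply cite Theorem~\ref{lln}, which packages the two preceding conclusions. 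Hypothesis (b) of Theorem~\ref{main-2} is then in force, and that theorem delivers the asserted weak convergence of $\{x_t^\epsilon\}$ on compact time intervals to the Markov process with generator $\bar\L$.

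I do not expect a genuine obstacle. The one point to watch is that the bounds produced by Propositions~\ref{uniform-sub}--\ref{continuity-invariant-measure} and Theorem~\ref{lemma-lln} are only \emph{local} in the parameter $x$; but this is harmless, because Lemma~\ref{lemma4.1} confines the slow variables $x_t^\epsilon$, uniformly in $\epsilon$, to a set that is tight in $C([0,T];N)$, so the martingale-problem argument inside the proof of Theorem~\ref{main-2} only ever uses the behaviour of $\mu_x$ and of $C(x)$ on compact subsets of $N$.
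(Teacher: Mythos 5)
Your proposal is correct and follows exactly the route the paper intends: invoke Theorem~\ref{main-2} after verifying Assumption~\ref{assumptions-Birkhoff} via Proposition~\ref{uniform-sub}, Proposition~\ref{continuity-invariant-measure}, and Theorem~\ref{lemma-lln} (equivalently, Theorem~\ref{lln}), which is precisely the remark the paper makes immediately before stating the corollary. Your closing observation that the locality of the bounds in $x$ is harmless because the martingale-problem argument only tests against compactly supported $f$ is also consistent with the proof of Theorem~\ref{main-2}.
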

From the proof of Theorem \ref{main-2}, the Markov generator $\bar \L$ given below.
\begin{equation}
\label{limit-op}
\bar \L  f(x)=\int_G \left(\f 12 \sum_{i=1}^{m_1} X_i^2(\cdot, y) f+ X_0 (\cdot, y) f\right) (x)\, \mu_x(dy).
\end{equation}

\subsection*{Appendix}\label{appendix}
It is possible to write the operator $\bar \L$ given by (\ref{limit-op})  as a sum of squares of vector fields. For this  we need  an auxiliary family of vector fields $\{E_1, \dots, E_{n_1}\}$ with the property that at each point $x$ they span the tangent space $T_xN$. 
 Let us write each vector field $X_i(\cdot, y)$ in this basis and denote by $X_i^k(\cdot,y)$ its coordinate functions, so
$X_i(x,y)=\sum_{k=1}^{n_1} X_i^k(x,y) E_k(x)$.
Set $$\begin{aligned}
a_{k,l}(x,y) &=  \sum_{i=1}^{m_1} X_i^k(x,y)X_i^l(x,y), \\
b_0^k(x,y)&=\f 12 \sum_{l=1}^{n_1}  \sum_{i=1}^{m_1} X^l_i (x,y) \left(\nabla X_k^l(\cdot, y)) (E_l(x))\right)+ X_0^k(x,y),
\end{aligned}$$
where $\nabla$ denotes differentiation with respect to the first variable.We observe that $$  \f 1 2 \sum_{i=1}^{m_1}(X_i(\cdot, y)^2 f) (x) +(X_0(\cdot, y)f)(x)
=  \f 1 2 \sum_{k,l=1}^{n_1} a_{k,l}(x,y) (E_kE_lf) (x)
+\sum_{k=1}^{n_1}b_0^k(x,y) (E_k f)(x). $$

If $\mu_x$ is a family of probability measures on $G$, we set
\begin{equation}
\begin{aligned}
\bar \L=  \f 1 2  \sum_{k,l=1}^{n_1}\left(\int_Ga_{k,l}(x,y)   \mu_x(dy)  \right) \, E_kE_l
+\sum_{k=1}^{n_1}\left( \int_G b_0^k(x,y) dy\right) \;E_k .
\end{aligned} 
\end{equation}

The auxiliary vector fields can be easily constructed. For example, we may use the 
gradient vector fields coming from an isometric embedding $i: N\to \R^{n_1}$. 
Then they have the 
following properties. For $e\in \R^{n_1}$, we define $E(x)(e)=\sum_{i=1}^{n_1} E_i(x) e_i$ where $\{e_i\}$ is an orthonormal basis 
of $\R^{n_1}$. Then $\R^{n_1}$ has a splitting of the form $\ker[X(x)]^\perp\oplus X(x)$
and $X(e)$ has vanishing derivative for $e\in \ker[X(x)]^\perp$. 
We may also use a `moving frames' instead of the gradient vector fields. This is particularly useful if 
 $N$ is an Euclidean space,  or a compact space, or  a Lie group.  For such spaces and their moving frames, the assumption that  $X_1, \dots, X_k$ and their two  order derivatives, $X_0$ and $\nabla X_0$ are bounded can be expressed by the boundedness of the functions $a_{k,l}$ and $b_0^k$
and their derivatives.

\section{Re-visit  the examples}
\label{examples2}

\subsection{A dynamical description for hypo-elliptic diffusions}\label{dynamical2} 
Let us consider two further generalisations to the dynamical theory for Brownian motions described in \S \ref{dynamical}. Both cases allow degeneracy in the fast variables.
One of which has the same type of reduced random ODE and  is closer to Theorem 2A.  We state this one first and will take $M$ compact for simplicity.
\begin{prop} Let $M$ be compact.
Suppose that in (\ref{ou-1}),  we replace the orthonormal basis $\{A_1, \dots, A_N\}$  and $A_0$
by a vectors  $\{ A_1, \dots A_{m_2}\}\subset \so(n)$ with the property that these vectors together with their commutators generates $\so(n)$. (Take $A_0=0$ for simplicity. Then,  as $\epsilon\to 0$, the rescaled  position stochastic processes, $x_{\f t \epsilon}^\epsilon$,  converges to a scaled Brownian motion. Their horizontal lifts from $u_0$ converge also. \end{prop}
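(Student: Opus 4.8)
The plan is to treat this as a diffusion-creation problem on the orthonormal frame bundle $OM$, using the conservation-law reduction of \S\ref{dynamical} together with the second-order analysis of \cite{Li-geodesic, Li-homogeneous}, now only assuming that the $A_k$ bracket-generate $\so(n)$ rather than forming a basis. First I would use the projection $\pi\colon OM\to M$ as the conservation law and, as in \S\ref{dynamical}, write the solution of (\ref{ou-1}) (with $A_0=0$) as $u_t^\epsilon=\tilde x_t^\epsilon\,g_{t/\epsilon}$, where $g_t$ is the left-invariant diffusion on the compact group $SO(n)$ with generator $L_A:=\f12\sum_{k=1}^{m_2}(A_k)^2$ (the $A_k$ identified with left-invariant vector fields) and the horizontal lift $\tilde x_t^\epsilon$ solves the random ODE $\f{d}{dt}\tilde x_t^\epsilon=H_{\tilde x_t^\epsilon}(g_{t/\epsilon}e_0)$. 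Note that $L_A$ does not depend on the slow variable. Since the $A_k$ and their iterated brackets span $\so(n)$, $L_A$ satisfies the strong H\"ormander condition on $SO(n)$, so by Proposition~\ref{uniform-sub} it is hypo-elliptic and Fredholm of index zero; it has a unique invariant probability measure, which by left-invariance of $L_A$ and of the Haar measure $dg$ equals $dg$, and convergence to $dg$ is exponential and uniform because $SO(n)$ is compact, so the locally uniform law of large numbers holds with rate $1/\sqrt t$ (Theorem~\ref{lln}, here with no parameter dependence). Running Theorem~\ref{main-2} on the time scale $[0,1]$ for the driftless slow variable shows $\tilde x_t^\epsilon$ converges to the solution of $\f{d}{dt}\bar u_t=\int_{SO(n)}H_{\bar u_t}(ge_0)\,dg=H_{\bar u_t}\big((\int_{SO(n)}g\,dg)e_0\big)$, which vanishes because $\int_{SO(n)}g\,dg=0$ for $n\ge2$. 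So the first-order effective motion is the fixed point $u_0$, and the genuine dynamics appear on the scale $[0,1/\epsilon]$.

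On $[0,1/\epsilon]$ I would then run Stroock--Varadhan's martingale method on $OM$ directly. Setting $(\tilde X_s^\epsilon,G_s^\epsilon):=(\tilde x_{s/\epsilon}^\epsilon,g_{s/\epsilon^2})$, we have $\f{d}{ds}\tilde X_s^\epsilon=\f1\epsilon\sum_j\phi^j(G_s^\epsilon)H_j(\tilde X_s^\epsilon)$ with $\phi^j(g)=(ge_0)^j$, $H_j(u)=H_u(e_j)$, and $G^\epsilon$ a fast diffusion of generator $\epsilon^{-2}L_A$. For $f\in C^\infty(OM)$ I would apply It\^o's formula to the perturbed test function $f(\tilde X_s^\epsilon)-\epsilon\sum_j q^j(G_s^\epsilon)(H_jf)(\tilde X_s^\epsilon)$, where $q^j$ solves the Poisson equation $L_A q^j=\phi^j$ on $SO(n)$ (solvable since $\int_{SO(n)}\phi^j\,dg=0$, and $q^j$ smooth by hypo-ellipticity), and show the remainder converges in $L^1$ to $\int_0^t\bar\L f(\tilde X_r^\epsilon)\,dr$ with $\bar\L f=-\sum_{i,j}\big(\int_{SO(n)}\phi^iq^j\,dg\big)H_iH_jf$. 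The error estimates use the uniform spectral gap of $L_A$ and the moment and modulus-of-continuity bounds for $\tilde X^\epsilon$ on sub-intervals, which on a compact $M$ follow from the estimates of \S\ref{estimates} exactly as for the slow variable in Lemma~\ref{freeze-lemma}; the same bounds give tightness of $\{\tilde X^\epsilon\}$ in $C([0,T];OM)$. The structural point that makes $\bar\L$ isotropic is that $V:=\Span\{\phi^1,\dots,\phi^n\}$ is a left-invariant subspace of $L^2(SO(n))$ carrying the standard representation of $SO(n)$, which is irreducible over $\R$ for $n\ge2$; since $L_A$ commutes with left translations and is self-adjoint, it acts on the corresponding isotypic component as $\mathrm{Id}_{\R^n}\otimes S$ with $S$ negative definite, and a short computation then gives $\int_{SO(n)}\phi^iq^j\,dg=-c\,\delta_{ij}$ for a single constant $c>0$ determined by the spectral gap of $L_A$ on $V$. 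Hence $\bar\L=c\sum_i H_i^2$, a positive multiple of the horizontal Bochner Laplacian on $OM$.

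It then follows that $\tilde x_{\cdot/\epsilon}^\epsilon$ converges weakly on $C([0,T];OM)$ to the diffusion with generator $c\sum_i H_i^2$, i.e.\ to the stochastic horizontal lift (parallel transport) of a Brownian motion on $M$ scaled by $c$; projecting by $\pi$ (under which $c\sum_i H_i^2$ maps to $c\Delta$) gives that $x_{t/\epsilon}^\epsilon=\pi(\tilde x_{t/\epsilon}^\epsilon)$ converges to a scaled Brownian motion on $M$, and the convergence of the horizontal lifts from $u_0$ is exactly the $OM$-level statement. The main obstacle is precisely this second-order step: Theorem~\ref{main-2} by itself only yields the trivial first-order limit, so the real work is the corrector argument on $[0,1/\epsilon]$, where one must control the error terms coming from the slow but non-zero motion of $\tilde X^\epsilon$ feeding into the fast $SO(n)$-dynamics and from the non-commutativity of $L_A$ with the $H_i$. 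This is carried out in \cite{Li-geodesic} when $\{A_k\}$ is a full orthonormal basis of $\so(n)$; the only new input is to verify that the uniform spectral gap of $L_A$, the regularity of the correctors $q^j$, and the Schur-lemma computation all survive under the bare bracket-generating hypothesis, which they do by the sub-elliptic estimates of Proposition~\ref{uniform-sub}.
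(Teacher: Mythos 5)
Your proposal follows essentially the same route as the paper: you perform the same conservation-law reduction to the random ODE $\dot{\tilde x}_t^\epsilon = H_{\tilde x_t^\epsilon}(g_{t/\epsilon}e_0)$ with $g_t$ a left-invariant diffusion on $SO(n)$, make the same observations that $L_A=\frac12\sum (A_k^*)^2$ is hypo-elliptic with unique invariant measure equal to Haar and is symmetric, note that the first-order average vanishes, and then pass to the scale $[0,1/\epsilon]$ for diffusion creation. Where the paper simply cites Theorem~6.4 of \cite{Li-limits} for this last step, you unpack it as a corrector/perturbed-test-function argument together with a Schur-lemma computation showing $\int_{SO(n)}\phi^i q^j\,dg=-c\,\delta_{ij}$ (for which one should also invoke the symmetry of that matrix to cover $n=2$, where the real commutant of the standard representation is $\C$ rather than $\R$); this is consistent with the paper's follow-up remark that the scale is determined by the eigenvalues of $\sum(A_i)^2$.
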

\begin{prop}
The scale is determined by the eigenvalues of the symmetry matrix $\sum_{i=1}^{m_2} (A_i)^2$.
\end{prop}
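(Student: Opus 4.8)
The plan is to use the reduction carried out in \S\ref{dynamical}. Taking $A_0=0$, the process $u_t^\epsilon$ of (\ref{ou-1}) is equivalent to the slow--fast pair
$$\f{d}{dt}\tilde x_t^\epsilon=H_{\tilde x_t^\epsilon}(g_{t/\epsilon}\,e_0),\qquad dg_t=\sum_{k=1}^{m_2}g_tA_k\circ dw_t^k,\quad g_0=\mathrm{Id},$$
where $\tilde x_t^\epsilon$ is the horizontal lift of $x_t^\epsilon=\pi(u_t^\epsilon)$ and $g_t$ is a diffusion on $SO(n)$ with generator $\L_0=\f12\sum_{k=1}^{m_2}\tilde A_k^2$, the $\tilde A_k$ being the left invariant vector fields with $\tilde A_k(\mathrm{Id})=A_k$. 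Since the $A_k$ bracket generate $\so(n)$, $\L_0$ is hypo-elliptic; $SO(n)$ being compact and unimodular, its normalised Haar measure $dg$ is the unique invariant probability measure of $\L_0$, $\L_0$ is self-adjoint on $L^2(dg)$ and has a spectral gap. As $\int_{SO(n)}g\,e_0\,dg=0$, the averaged vector field $\int_{SO(n)}H_u(g\,e_0)\,dg$ vanishes, so on $[0,1]$ the slow motion collapses to a point and the relevant scale is $[0,\f1\epsilon]$; the preceding proposition supplies the weak convergence of $x^\epsilon_{t/\epsilon}$, and of $\tilde x^\epsilon_{t/\epsilon}$, to a scaled Brownian motion, and what remains is to pin down the scale.

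For this I would carry out the diffusion creation argument (perturbed test functions, or the martingale method of \S\ref{proof}, applied on the scale $[0,\f1\epsilon]$). Let $b^\alpha(g)=(g\,e_0)_\alpha$ denote the components of $H_u(g\,e_0)$ in the global moving frame $\{H_1,\dots,H_n\}$ of canonical horizontal vector fields on $OM$. The generator of the limit of $\tilde x^\epsilon_{t/\epsilon}$ is
$$\bar\L F=\int_{SO(n)}b^\alpha(g)\,H_\alpha\!\big(\chi^\beta\,H_\beta F\big)\,dg,\qquad \L_0\,\chi^\beta=-b^\beta .$$
The decisive point is that neither $b$ nor the fast generator $\L_0$ depends on the slow variable, so the corrector $\chi=(\chi^1,\dots,\chi^n)$ is a function of $g$ alone; hence $H_\alpha\chi^\beta=0$, the operator $\bar\L$ has no first order term, and $\bar\L F=a^{\alpha\beta}H_\alpha H_\beta F$ with $a^{\alpha\beta}=\int_{SO(n)}b^\alpha(g)\chi^\beta(g)\,dg$.

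The cell equation solves in closed form. For $w\in\R^n$ the matrix coefficient $g\mapsto gw$ satisfies $\tilde A_k^2(g\mapsto gw)=(g\mapsto gA_k^2w)$, so $\L_0$ preserves the $n$-dimensional space $\mathcal V=\{g\mapsto gw:\,w\in\R^n\}$ and acts there as $w\mapsto\f12 Sw$, where $S:=\sum_{k=1}^{m_2}(A_k)^2$ is the symmetry matrix of the statement. This $S$ is symmetric and negative definite, since $\langle Sw,w\rangle=-\sum_k|A_kw|^2$ and a vector in $\bigcap_k\ker A_k$ is killed by every iterated bracket of the $A_k$ (induction on bracket depth), hence by all of $\so(n)$, hence vanishes. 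Consequently $\chi(g):=-2\,gS^{-1}e_0\in\mathcal V$ solves $\L_0\chi^\alpha=-(g\,e_0)_\alpha=-b^\alpha$. Inserting this and using the Haar integral $\int_{SO(n)}g_{\alpha i}g_{\beta j}\,dg=\f1n\delta_{\alpha\beta}\delta_{ij}$ yields
$$a^{\alpha\beta}=-2\int_{SO(n)}(g\,e_0)_\alpha\,(gS^{-1}e_0)_\beta\,dg=-\f2n\,\langle e_0,S^{-1}e_0\rangle\,\delta^{\alpha\beta}=c\,\delta^{\alpha\beta},\qquad c:=\f2n\big\langle e_0,(-S)^{-1}e_0\big\rangle>0 .$$
Hence $\bar\L=c\sum_{\alpha=1}^n H_\alpha^2$ is $c$ times the horizontal (Bochner) Laplacian on $OM$, so $\tilde x^\epsilon_{t/\epsilon}$ converges to the associated horizontal Brownian motion and $x^\epsilon_{t/\epsilon}$ to the Brownian motion on $M$ with generator $c\,\Delta$ --- the assertion of the preceding proposition, with the scale now identified. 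The constant $c=\f2n\langle e_0,(-S)^{-1}e_0\rangle$ is manifestly determined by the spectrum of $S=\sum_i(A_i)^2$: it equals $\f2{n\lambda_0}$ when $e_0$ is an eigenvector of $S$ with eigenvalue $-\lambda_0$, and when the $A_i$ form an orthonormal basis of $\so(n)$, $S$ is a negative multiple of the identity (the Casimir) and $c$ reduces to $\f4{n(n-1)}$, recovering Theorem 2A up to the normalisation convention for Brownian motion on $SO(n)$.

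The only step that needs genuine care, beyond invoking the preceding proposition, is the justification of the diffusion creation limit with the merely hypo-elliptic fast generator $\L_0$; I expect this to go exactly as in \S\ref{Birkhoff}--\S\ref{proof}, the compactness of $SO(n)$ together with the H\"ormander spectral gap furnishing both the tightness of $x^\epsilon_{t/\epsilon}$ on the compact manifold $M$ and the quantitative ergodicity required to close the martingale problem, while the explicit corrector removes any ambiguity in the form of $\bar\L$. I do not anticipate a further obstacle.
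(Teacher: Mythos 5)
Your proof is correct and takes a genuinely different route from the paper. The paper's own justification for this Proposition (shared with the preceding one) consists essentially of the single line ``we apply Theorem~6.4 from \cite{Li-limits} to conclude'' --- the scale is never computed within the text. You instead carry out the diffusion-creation computation in full: you observe that the left-invariant operator $\L_0$ preserves the $n$-dimensional space of matrix coefficients $\mathcal V=\{g\mapsto gw:w\in\R^n\}$ and acts there as $\tfrac12 S$ with $S=\sum_k A_k^2$, you establish that $S$ is negative definite from bracket-generation (a neat inductive point), and you solve the cell problem $\L_0\chi=-b$ in closed form as $\chi(g)=-2gS^{-1}e_0$. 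Combined with the elementary Schur-orthogonality identity $\int_{SO(n)} g_{\alpha i}g_{\beta j}\,dg=\tfrac1n\delta_{\alpha\beta}\delta_{ij}$, this yields the explicit constant $c=\tfrac2n\langle e_0,(-S)^{-1}e_0\rangle$, making the dependence on the eigenvalues of $S$ transparent --- indeed sharper than the Proposition's wording, since the scale depends on both the spectrum of $S$ and the spectral decomposition of $e_0$. Your approach buys concreteness and a self-contained argument; the paper's buys brevity by deferring to an external theorem. Two small points: the observation that $H_\alpha\chi^\beta=0$ (so $\bar\L$ has no first-order part) is the crucial structural feature here and correctly identified --- it relies precisely on the driving field $b$ and the fast generator being independent of the slow variable, which fails for the generalisation in the open Problem of \S6.1; and the factor-of-two mismatch with Theorem~2A is, as you guessed, purely the normalisation of the $\Ad$-invariant inner product on $\so(n)$ (taking $\langle A,B\rangle=-\tr(AB)$ rather than $-\tfrac12\tr(AB)$ halves $S$ and doubles $c$, recovering $\tfrac4{n(n-1)}$). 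You are also right that the only genuine gap to fill --- the validity of the perturbed-test-function scheme with a merely hypo-elliptic $\L_0$ on compact $SO(n)$ --- is exactly what \cite{Li-limits} supplies, and is why the paper chose simply to cite it.
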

\begin{proof}
The reduced equation is as before:
  \begin{equation*}\left\{\begin{aligned}
& \f {d} {dt} \tilde x_t^\epsilon=H_{\tilde x_t^\epsilon} (g_{\f t \epsilon} e_0),
 \quad  \tilde x_0^\epsilon=u_0,\\
&dg_t=  \sum_{k=1}^{m_2}{g_t} A_k \circ dw_t^k+g_t A_0\, dt, \quad g_0=Id. 
\end{aligned}\right.
\end{equation*}
We observe that the operator $\sum_{i=1}^{m_2} (A_i^*)^2+A_0^*$ satisfies H\"ormander's condition and has a unique invariant probability measure. It is symmetric w.r.t the bi-invariant Haar measure $dg$, and the only invariant measure is $dg$.
Then we apply Theorem 6.4 from \cite{Li-limits} to conclude.
\end{proof}

Suppose instead we consider the following SDE, in which the horizontal part involves a stochastic integral
\begin{equation}\label{ou-2}\left\{
\begin{split}du_t^\epsilon&= H_{u_t^\epsilon}(e_0)\,dt+\sum_{j=1}^{m_1} H(u_t^\epsilon)(e_j)\circ dB_t^j
+{1\over \sqrt \epsilon} \sum_{k=1}^{m_2}A_k^*(u_t^\epsilon)\circ dW_t^k+ A_0^*
(u_t^\epsilon) \,dt,
 \\ u_0^\epsilon&=u_0.
\end{split}\right.
\end{equation}
where $e_j\in \R^n$.

\begin{prop}
Suppose that $M$ has bounded sectional curvature.
Suppose that $\{A_0, A_1, \dots, A_{m_2}\}$ and their iterated  brackets (commutators) generate the vector space $\so(n)$.  Suppose that $\{e_1, \dots, e_{m_1}\}$
is an orthonormal set.
Then as $\epsilon\to 0$, the  position component of $u_t^\epsilon $, $x_t^\epsilon$,   converges to a rescaled Brownian motion, scaled by $\f {m_1} n$ where $n=\dim(M)$. Their horizontal lifts  converge also to a  horizontal Brownian motion with the same scale.
\end{prop}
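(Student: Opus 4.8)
The plan is to read (\ref{ou-2}) as a slow--fast system of the form (\ref{sdes}) on $N\times G$ with $N=OM$ the orthonormal frame bundle of $M$ ($n=\dim M\ge 2$), viewed as a principal $SO(n)$-bundle, and $G=SO(n)$, and then to apply the averaging theorem, Theorem \ref{main-2}. First I would perform the principal-bundle reduction exactly as in the Lemma of \S\ref{averaging}: writing $u_t^\epsilon=\tilde x_t^\epsilon g_t^\epsilon$ with $\tilde x_t^\epsilon$ the horizontal lift of $x_t^\epsilon=\pi(u_t^\epsilon)$ from $u_0$ and $g_t^\epsilon\in SO(n)$, and applying the connection form $\varpi$ to (\ref{ou-2}) (which annihilates the horizontal terms $H(\cdot)(e_j)$), one obtains that $g_t^\epsilon$ solves an \emph{autonomous} left-invariant SDE on $SO(n)$,
\[
dg_t^\epsilon=\frac1{\sqrt\epsilon}\sum_{k=1}^{m_2}A_k^*(g_t^\epsilon)\circ dW_t^k+\frac1\epsilon A_0^*(g_t^\epsilon)\,dt,
\]
with generator $\frac1\epsilon\L_0$, $\L_0=\frac12\sum_{k=1}^{m_2}(A_k)^2+A_0$ (the coefficients do not involve the slow variable, the $A_k$ being constant), while $\tilde x_t^\epsilon$ solves
\[
d\tilde x_t^\epsilon=\sum_{j=1}^{m_1}H_{\tilde x_t^\epsilon}(g_t^\epsilon e_j)\circ dB_t^j+H_{\tilde x_t^\epsilon}(g_t^\epsilon e_0)\,dt,
\]
where $H_u(v)=\h_u(uv)$ for $v\in\R^n$. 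Thus $X_j(u,g)=H_u(ge_j)$, $X_0(u,g)=H_u(ge_0)$ and $Y_k=A_k^*$.

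Next I would verify the hypotheses of Theorem \ref{main-2}. Since $|H_u(v)|=|v|$ and $|ge_j|=|e_j|=1$, the fields $X_j$ and $X_0$ are globally bounded, with bounded derivatives because $OM$ has bounded geometry once $M$ does; this gives Assumption \ref{assumeX}(i). Compactness of $SO(n)$ and left invariance of the $A_k^*$ give Assumption \ref{assumeY}. For the locally uniform law of large numbers (Assumption \ref{assumptions-Birkhoff}): $\L_0$ satisfies H\"ormander's condition because $\{A_0,A_1,\dots,A_{m_2}\}$ and their iterated brackets span $\so(n)$; as a H\"ormander operator on a compact group it has a unique invariant probability measure, which is the normalised Haar measure $dg$ ($\frac12\sum(A_k)^2$ being symmetric for $dg$ and $A_0^*$ left invariant). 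Since $\L_x\equiv\L_0$ is independent of the slow variable, the rate is automatically locally uniform, and Theorem \ref{lln} supplies it with $\lambda(t)=1/\sqrt t$.

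Theorem \ref{main-2} then yields that $\tilde x_t^\epsilon$ converges weakly on $C([0,T];OM)$ to the Markov process with generator $\bar\L$ of (\ref{limit}), so $x_t^\epsilon=\pi(\tilde x_t^\epsilon)$ converges to its projection. To identify $\bar\L$, note from (\ref{limit}) that
\[
\bar\L f=\int_{SO(n)}\Big(\tfrac12\sum_{j=1}^{m_1}\big(H_\cdot(ge_j)\big)^2f+H_\cdot(ge_0)f\Big)\,dg.
\]
For fixed $v\in\R^n$ the operator $v\mapsto\int_{SO(n)}(H_\cdot(gv))^2\,dg$ is a quadratic form in $v$ invariant under $SO(n)$, hence equals $\frac{|v|^2}{n}\sum_{i=1}^nH_i^2$, the constant being fixed by summing over an orthonormal basis of $\R^n$ and using that $\sum_iH_i^2$ is independent of the basis; moreover $\int_{SO(n)}H_\cdot(gv)\,dg=0$ since $\int_{SO(n)}gv\,dg=0$. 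As $|e_j|=1$, summing over $j=1,\dots,m_1$ gives
\[
\bar\L=\frac{m_1}{2n}\sum_{i=1}^nH_i^2=\frac{m_1}{2n}\Delta^{\mathrm{hor}}_{OM},
\]
which projects to $\frac{m_1}{2n}\Delta_M$. Hence the limit of $\tilde x_t^\epsilon$ is horizontal Brownian motion on $OM$ scaled by $m_1/n$ (it is the stochastic horizontal lift of its projection, as $\varpi$ kills all the $H_i$) and $x_t^\epsilon$ converges to Brownian motion on $M$ scaled by $m_1/n$, as claimed.

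The step I expect to be the main obstacle is the locally uniform law of large numbers under the stated generality. In (\ref{ou-2}) the drift $A_0^*$ is written at order one, whereas the template (\ref{sdes}) carries its fast drift at order $1/\epsilon$; read literally, the frozen fast motion has generator $\frac12\sum(A_k)^2+\epsilon A_0$, which for small $\epsilon$ may fail H\"ormander's condition unless $\{A_1,\dots,A_{m_2}\}$ alone are bracket generating, so one must either strengthen the hypothesis to strong H\"ormander or argue that the lower-order drift does not spoil the \emph{uniform} rate --- e.g. by a spectral-gap bound for $\frac1\epsilon\big(\frac12\sum(A_k)^2\big)+A_0$ on $SO(n)$, whose symmetric part has gap of order $1/\epsilon$ while $A_0^*$ is a bounded skew perturbation. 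Reading the drift at order $1/\epsilon$, as above, makes $\L_0$ itself the H\"ormander operator and removes this difficulty; a minor secondary point is the boundedness of the geometry of $OM$, granted once it is assumed for $M$.
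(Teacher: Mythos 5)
Your proof follows essentially the same route as the paper: reduce (\ref{ou-2}) via the principal-bundle splitting $u_t^\epsilon=\tilde x_t^\epsilon g_t^\epsilon$ to the slow--fast form (\ref{sdes}) with fast generator $\L_0=\frac12\sum_{k=1}^{m_2}(A_k)^2+A_0$ on the compact group $SO(n)$ (Haar measure the unique invariant measure), invoke Theorem \ref{main-2}, and identify $\bar\L=\frac{m_1}{2n}\Delta^H$ --- your $SO(n)$-invariance argument for the quadratic form is the same computation as the paper's $\sum_{i=1}^{m_1}\int_{SO(n)}\langle e_k,ge_i\rangle\langle ge_i,e_l\rangle\,dg=\frac{m_1}{n}\delta_{kl}$ after extending $\{e_1,\dots,e_{m_1}\}$ to an orthonormal basis. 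Your closing remark about the order of the $A_0^*$ drift is well observed: the paper's proof quietly resolves it by taking the fast process to be $g_{t/\epsilon}$ where $g_t$ is generated by $\frac12\sum A_k^2+A_0$, which is precisely the reading (drift at order $1/\epsilon$) that you recommend as the way to make $\L_0$ $\epsilon$-independent and H\"ormander.
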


\begin{proof}
Set $x_t^\epsilon=\pi(u_t^\epsilon)$, where $\pi$ takes an frame to its base point. Then  $x_t^\epsilon$ is the position process. Then
$$dx_t^\epsilon=\sum_{i=1}^{m_1} u_t^\epsilon (e_i) \circ dB_t^i+u_t^\epsilon e_0\,dt.$$
Let $\tilde x_t^\epsilon$ denote the stochastic horizontal lifts of $x_t^\epsilon$.  Then from the nature of the horizontal vector fields and the horizontal lifts, this procedure introduces a twist to the Euclidean vectors $e_i$. If $g_t$ solves:
$$dg_t=  \sum_{k=1}^{m_2}{g_t} A_k \circ dw_t^k+g_t A_0\, dt$$
with initial value the identity, then
  $x_t^\epsilon$
satisfies the equation
$$d \tilde x_t^\epsilon=\h_{\tilde x_t^\epsilon} dx_t^\epsilon
=H(\tilde x_t^\epsilon) (g_{\f t \epsilon} e_0)dt+ \sum_{i=1}^{m_1} H(\tilde x_t^\epsilon) (g_\f{ t}{\epsilon} e_i) \circ dB_t^i.$$

Since $g_t$ does not depend on the slow variable, the conditions of  the Theorem is satisfied
provided $M$ has bounded sectional curvature. 

 The limiting process, in this case, will not be a fixed point. It is a Markov process on the orthonormal frame bundle with generator
$$\bar \L f(u)=\sum_{i=1}^{m_1}\int_{SO(n)} \nabla df (H(u)(ge_i), H(u)(ge_i)  )dg,$$
where $\nabla $ is a flat connection,   $\nabla  H_i$ vanishes. Let $dg$ denote the normalised bi-invariant Haar measure. Using this connection and an
orthonormal basis $\{e_i\}$ of $\R^n$, extending our orthonormal  set $\{e_1, \dots, e_{m_1}\}$  we see that
$$\bar \L f(u)=\sum_{k,l=1}^n \nabla df (H(E_k), H(E_l)) 
 \sum_{i=1}^{m_1}  \int_{SO(n)} \<e_k, ge_i\> \<  ge_i , e_l\> dg.$$
 It is easy to see that $$ \sum_{i=1}^{m_1}  \int_{SO(n)} \<e_k, ge_i\> \<  ge_i , e_l\> dg=
  \sum_{i=1}^{m_1} \delta_{k,l} \int_{SO(n)} \<e_k, ge_i\> ^2 dg=\f {m_1}n \delta_{k,l}.$$
  This means that  $$\bar \L f(u)=\f {m_1}n \sum_{k,l=1}^n \nabla df (H(e_k), H(e_k)).$$
  Thus the $\bar \L$ diffusion has Markov generator $\f 1 2 \f {m_1}n\Delta^H$
  where $\Delta^H$ is the horizontal diffusion and which means that $\pi(u_t^\epsilon)$ converges to a scaled Brownian motion as we have guessed.
\hfill $\square$
\end{proof}
\begin{problem}
The vertical vector fields  in (\ref{ou-2}) are left  invariant. 
Instead of left invariant vertical vector fields we may take more general vector fields and consider the following SDEs.
Let $f:OM\to \R$ be smooth functions, $e_j\in \R^n$ are unit vectors.
 Let us consider the equation \begin{equation}\label{ou-3}\left\{
\begin{split}du_t^\epsilon&= H_{u_t^\epsilon}(e_0)\,dt+\sum_{j=1}^{m_1} H(u_t^\epsilon)(e_j) \circ dB_t^i
+{1\over \sqrt \epsilon} \sum_{k=1}^{m_2} f_k(u_t^\epsilon)A_k^*(u_t^\epsilon)\circ dW_t^k+ A_0^*
(u_t^\epsilon) \,dt,
 \\ u_0^\epsilon&=u_0.
\end{split}\right.
\end{equation}
Then the horizontal lift of its position processes will, in general, depend on the slow variables.
It  would be interesting to determine explicit conditions on $f_k$ for which the averaging procedure  
is valid and  if so what is  the effective limit?
\end{problem}

\subsection{Inhomogeneous scaling of Riemannian metrics}\label{scaling2}
Returning to section \ref{scaling} we pose the following problem.

\begin{problem} With Theorem \ref{main-2}, we can now study a fully coupled system:
$$dg_t^\epsilon ={1\over \sqrt \epsilon} \sum_{k=1}^{m_2} (a_k  A_k)(g_t^\epsilon) \circ dB_t^k +{1\over \epsilon} (a_0A_0)(g_0^\epsilon)dt
+(b_0Y_0)(g_t^\epsilon) dt+\sum_{k=1}^{m_1} (b_k  Y_k)(g_t^\epsilon) \circ dW_t^k,$$ 
where $a_k, b_k$ are smooth functions. It would be interesting to study the convergence of the slow variables,  vanishing of the averaged processes, and  the nature of the  limits in terms of $a_k$ and $b_k$.
\end{problem}

\subsection {An averaging principle on principal bundles}\label{averaging2}
We return to the example in section \ref{averaging}.
 In the following proposition, $\nabla$ denotes the flat connection on the principal bundle $P$.
\begin{prop}
\label{level-thm}
Let $G$ be a compact Lie group and $dg$ its Haar measure. 
Assume that $M$  has bounded sectional curvature. Suppose that $\L_u$ satisfies H\"ormander's condition and has a unique invariant probability measure.  Suppose that $\theta_k^j$  are bounded with bounded derivatives. Define $$\begin{aligned}
a_{i,j}(u)&=\int_{G} \sum_{l=1}^{m_1} \<X_l(u,g), H_i(u ) \>\<X_l(ug), H_j(u ) \>\;dg,\\
b(u)&=   \int_{G} \left({1\over 2} \sum_{l=1}^{m_1}  \nabla_{X_l} X_l(ug)+X_0(ug)\right)dg.
\end{aligned}$$
 Then $\tilde x_{t\over\epsilon}^\epsilon$ converges weakly to a Markov process on $P$ with the Markov generator 
 $$\bar \L f(u) = df(b(u))
   			+{1\over 2}\sum_{i,j=1}^n a_{i,j}(u) \nabla df(H_i(u),H_j(u) ).$$
 \end{prop}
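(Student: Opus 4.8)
The plan is to recognise this statement as a direct application of Theorem \ref{main-2} to the slow--fast system obtained by horizontally lifting the process on the principal bundle, and then to unravel the abstract limiting generator $\bar\L$ into the stated closed form.

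I would start from the Lemma of \S\ref{averaging}: writing $\tilde x_t^\epsilon$ for the horizontal lift of $x_t^\epsilon=\pi(u_t^\epsilon)$ and $g_t^\epsilon\in G$ for the fibre coordinate with $u_t^\epsilon=\tilde x_t^\epsilon g_t^\epsilon$, the pair $(\tilde x_t^\epsilon,g_t^\epsilon)$ solves a slow--fast system on $N\times G$ with slow manifold $N=P$, whose slow equation is driven by the horizontal vector fields $X_k(u,g)=\sum_{i=1}^n\sigma_k^i(ug)\,TR_{g^{-1}}H_i(ug)$ on $P$ and whose fast $g_t^\epsilon$-equation has generator $\L_u$. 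Next I would verify the hypotheses of Theorem \ref{main-2}. Equip $P$ with the Riemannian metric making $\{H_1,\dots,H_n,A_1^*,\dots,A_N^*\}$ orthonormal; since $M$ has bounded sectional curvature and $G$ is compact, $P$ has bounded sectional curvature, so the geometric halves of Assumptions \ref{assumeX} and \ref{assumeY} hold. As $G$ is compact, $g\mapsto TR_{g^{-1}}$ and its derivatives are uniformly bounded, and the $\sigma_k^i,\theta_k^j$ are bounded with bounded derivatives; hence the reduced fields $X_k(\cdot,g)$ and $\tilde X_0(\cdot,g)$ on $P$ are bounded with bounded first derivatives uniformly in $g$ (so Assumption \ref{assumeX}(i) holds with a constant in place of linear growth), and $Y_k(u,\cdot)=\sum_j\theta_k^j(ug)A_j^*$ and $\tilde Y_0$ are $C^2$ and bounded with bounded first derivatives (so Assumption \ref{assumeY} holds). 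Finally, $G$ is compact and each $\L_u$ satisfies H\"ormander's condition with a unique invariant probability measure $\mu_u$, so Theorem \ref{lln} (via Theorem \ref{lemma-lln} and Proposition \ref{continuity-invariant-measure}) supplies the locally uniform law of large numbers of Assumption \ref{assumptions-Birkhoff} with rate $\lambda(t)=1/\sqrt t$. Theorem \ref{main-2} then gives weak convergence of the horizontal lift process on $C([0,T];P)$ to a Markov process on $P$ with generator $\bar\L f(u)=\int_G\bigl(\tfrac12\sum_{k=1}^{m_1}X_k^2(\cdot,g)f+X_0(\cdot,g)f\bigr)(u)\,\mu_u(dg)$.

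It then remains to put $\bar\L$ into the stated form. Each $X_k(\cdot,g)$ is horizontal, so writing $X_k(u,g)=\sum_{i=1}^n X_k^i(u,g)H_i(u)$ with $X_k^i(u,g)=\langle X_k(ug),H_i(u)\rangle$ and applying the sum--of--squares identity of the Appendix to \S\ref{proof} with the frame $\{H_1,\dots,H_n,A_1^*,\dots,A_N^*\}$ (only the horizontal directions occur, since $X_k$ and $X_0$ are horizontal) rewrites the integrand as $\tfrac12\sum_{i,j=1}^n\bigl(\sum_k X_k^iX_k^j\bigr)(u,g)\,\nabla df(H_i(u),H_j(u))+df\bigl(\tfrac12\sum_k\nabla_{X_k}X_k(ug)+X_0(ug)\bigr)$, where $\nabla$ is the flat connection for which the chosen frame is parallel. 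Because $G$ is unimodular the left-invariant fields $A_j^*$ are divergence free for the bi-invariant Haar measure $dg$, so $\L_u$ is symmetric with respect to $dg$ and therefore $\mu_u=dg$; integrating the previous display over $G$ against $dg$ and recalling the definitions of $a_{i,j}(u)$ and $b(u)$ yields $\bar\L f(u)=df(b(u))+\tfrac12\sum_{i,j=1}^n a_{i,j}(u)\,\nabla df(H_i(u),H_j(u))$.

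The step I expect to be the main obstacle is the verification of Assumption \ref{assumeX} on the (possibly non-compact) total space $P$: one must check that the ``twisted'' horizontal vector fields $TR_{g^{-1}}H_i(ug)$, and the associated It\^o drift $\tilde X_0$, really are bounded with bounded first derivatives uniformly in the fibre variable (this is exactly where compactness of $G$ enters), and that $P$ has bounded sectional curvature for the chosen metric (automatic when $M$ is compact, and otherwise requiring also the curvature of the principal connection to be bounded, by O'Neill's formulas). A secondary delicate point is the identification $\mu_u=dg$: it rests on the symmetry of $\L_u$ with respect to Haar measure, which is automatic when $\theta_0^j\equiv 0$ and the $\theta_k^j$ are equivariant, and should otherwise be read as part of the standing hypotheses.
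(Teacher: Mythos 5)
Your proposal is correct and follows essentially the same route as the paper: apply Theorem~\ref{main-2} to the reduced slow--fast system $(\tilde x_t^\epsilon,g_t^\epsilon)$ from the Lemma of \S\ref{averaging}, then rewrite the abstract averaged generator in the $H_i$-frame via the sum--of--squares decomposition of the Appendix. You are more explicit than the paper on two points that it leaves implicit, and these are worth keeping: first, Theorem~\ref{main-2} produces an integral against the invariant measure $\mu_u$ of $\L_u$, whereas the proposition states its coefficients as integrals against Haar measure $dg$, so the identification $\mu_u=dg$ genuinely needs the symmetry of $\L_u$ with respect to $dg$ (automatic when the $\theta_k^j$ are equivariant and $\theta_0^j\equiv0$, but a hidden hypothesis otherwise); and second, for non-compact $M$ the verification of Assumption~\ref{assumeX} on the total space $P$ needs bounded curvature of $P$ in the chosen metric, which by O'Neill's formulas requires boundedness of the connection's curvature in addition to that of $M$.
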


\begin{proof}
The convergence is a trivial consequence of Theorem \ref{main-2}. To identify the limit  let $f: P\to \R$ 
be any smooth function with compact support. Then $$
\begin{aligned}
f(\tilde x_t^\epsilon)&=f(g_0)+\,\sum_{l=1}^{m_1}\int_0^t df\left( X_l((\tilde x_s^\epsilon g_s^\epsilon) \right)dB_s^l
+\sum_{l=1}^{m_1}
\int_0^t \nabla df\left(X_l(\tilde x_s^\epsilon g_s^\epsilon),X_l(\tilde x_s^\epsilon g_s^\epsilon) \right)ds\\
&+ \sum_{l=1}^{m_1}  \int_0^t  df\left( \nabla_{X_l} X_l(\tilde x_s^\epsilon g_s^\epsilon)+X_0(\tilde x _s^\epsilon g_s^\epsilon) \right) ds.
\end{aligned}$$
Finally we take coordinates of $X_l$ w.r.t  the parallel  vector fields  $H_i$, c.f. the Appendix of \S\ref{proof}, to complete the proof.
\end{proof}

{\bf Conclusions and Other Open Questions. } In conclusion, the examples we studied treat some of the  simplest and yet universal  models, they can be studied using the method we have just developed. Even for these simple models many questions remain to be answered, including the questions stated in \S\ref{results}, \S\ref{dynamical} and \S\ref{scaling}. For example  we do not know the geometric nature of the limiting object. Concerning Theorem \ref{main-2},  we expect the conditions of the theorem improved  for more specific examples of manifolds, and expect an upper bound for the  rate of convergence if  the resolvents of the operators $\L_x$ is bounded in $x$ and if the rank of the operators and their quadratic forms are bounded, and also expect an averaging principle for
slow-fast SDEs driven by L\'evy processes, c.f. \cite{Hogele-Ruffino}.
 
\bibliographystyle{alpha}
\bibliography{Hopf.bib}

\end{document}